\DeclareMathOperator{\tw}{tw}
\DeclareMathOperator{\Av}{Av}
\newcommand{\st}[2]{\mathrm{st}(#1;#2)}
\newcommand{\wst}[2]{\mathrm{wst}(#1;#2)}
\newcommand{\cC}{\mathcal{C}}
\newcommand{\cD}{\mathcal{D}}
\newcommand{\cS}{\mathcal{S}} 
\newcommand{\bbN}{\mathbb{N}}
\newcommand{\cA}{\mathcal{A}}
\newcommand{\cB}{\mathcal{B}}
\newcommand{\cL}{\mathcal{L}}
\newcommand{\cF}{\mathcal{F}}
\newcommand{\PBT}{\mathcal{PBT}}
\newcommand{\cR}{\mathcal{R}}
\newcommand{\cRR}{\mathcal{RR}}
\newcommand{\cT}{\mathcal{T}}
\newcommand{\Fringe}[1]{#1\text{-}Fringe}
\newcommand{\RFringe}[1]{#1\text{-}RFringe}
\newcommand{\cCpan}{\cC_{\normalfont\textsf{Pan}}}
\newcommand{\cCins}{\cC_{\normalfont\textsf{Ins}}}
\newcommand{\cCbub}{\cC_{\normalfont\textsf{Bub}}}
\DeclareMathOperator{\Grid}{Grid}
\DeclareMathOperator{\rin}{rin}
\DeclareMathOperator{\cd}{cd}
\DeclareMathOperator{\tcd}{tcd}
\DeclareMathOperator{\Int}{Int}
\newcommand{\Inc}{\raisebox{-0.3mm}{\includegraphics[height=0.7em]{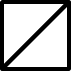}}}
\newcommand{\Dec}{\raisebox{-0.3mm}{\includegraphics[height=0.7em]{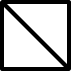}}}
\newcommand{\one}{\bullet}
\newtheorem{problem}{Problem}[section]
\newtheorem{theorem}{Theorem}[section]
\newtheorem{fact}[theorem]{Fact}
\newtheorem{corollary}[theorem]{Corollary}
\newtheorem{lemma}[theorem]{Lemma}
\newtheorem{observation}[theorem]{Observation}
\newtheorem{proposition}[theorem]{Proposition}
\theoremstyle{definition}
\theoremstyle{remark}
\begin{document}

\title{The Hierarchy of Hereditary Sorting Operators}
\author[1]{Vít Jelínek\thanks{Supported by the Czech Science Foundation under the grant agreement no. 23-04949X.}}
\author[2]{Michal Opler\thanks{Supported by the Czech Science Foundation Grant no. 22-19557S.}}
\author[1]{Jakub Pekárek}

\affil[1]{Charles University, Czechia} 
\affil[2]{Czech Technical University in Prague, Czechia}

\date{}

\maketitle

\begin{abstract} \small\baselineskip=9pt 
We consider the following general model of a sorting procedure: we fix a hereditary permutation 
class $\cC$, which corresponds to the operations that the procedure is allowed to perform in a 
single step. The input of sorting is a permutation $\pi$ of the set $[n]=\{1,2,\dotsc,n\}$, i.e., a 
sequence where each element of $[n]$ appears once. In every step, the sorting procedure picks a 
permutation $\sigma$ of length $n$ from $\cC$, and rearranges the current permutation of numbers by 
composing it with $\sigma$. The goal is to transform the input $\pi$ into the sorted sequence 
$1,2,\dotsc,n$ in as few steps as possible.

Formally, for a hereditary permutation class $\cC$ and a permutation $\pi$ of $[n]$, we say that 
\emph{$\cC$ can sort $\pi$ in $k$ steps}, if the inverse of $\pi$ can be obtained by composing $k$ 
(not necessarily distinct) permutations from~$\cC$. The \emph{$\cC$-sorting time of $\pi$}, denoted 
$\st\cC\pi$, is the smallest $k$ such that $\cC$ can sort $\pi$ in $k$ steps; if no such $k$ 
exists, we put $\st\cC\pi=+\infty$. For an integer $n$, the \emph{worst-case $\cC$-sorting 
time}, denoted $\wst\cC n$, is the maximum of $\st\cC\pi$ over all permutations $\pi$ of $[n]$.

This model of sorting captures not only classical sorting algorithms, like insertion sort or bubble 
sort, but also sorting by series of devices, like stacks or parallel queues, as well as sorting by 
block operations commonly considered, e.g., in the context of genome rearrangement.

Our goal is to describe the possible asymptotic behavior of the function $\wst\cC n$, and relate 
it to structural properties of $\cC$. As the main result, we show that any hereditary permutation 
class $\cC$ falls into one of the following five categories:

\begin{itemize}
\item $\wst\cC n=+\infty$ for $n$ large enough,
\item $\wst\cC n=\Theta(n^2)$,
\item $\Omega(\sqrt{n}) \le \wst\cC n\le O(n)$,
\item $\Omega(\log n)\le \wst\cC n \le O(\log^2 n)$, or
\item $\wst\cC n=1$ for all $n\ge 2$.
\end{itemize}

In addition, we characterize the permutation classes in each of the five categories.
\end{abstract}

\section{Introduction}\label{sec-intro}

Sorting is, needless to stress, a central concept in computer science. The aim of this paper is to 
study a general way to formalize the notion of sorting, based upon the concept of 
hereditary permutation classes. In this approach, we fix a hereditary class $\cC$ of 
permutations, which corresponds to the operations that we are allowed to perform in a single step 
of sorting. Given an input sequence of $n$ distinct values, itself viewed as a permutation, we aim
to sort it into an increasing sequence by a sequence of steps, where in each step we compose the 
current sequence with an element of~$\cC$. The goal is to minimize the number of steps for a given 
input, and to determine the worst-case number of steps $\cC$ needs on inputs of size~$n$. This 
last quantity, which we call the \emph{worst-case sorting time} of the class $\cC$, is the main 
focus of this paper.

Our formalism is general enough to encompass standard sorting procedures, like sorting by adjacent 
transpositions (``bubble-sort'') or sorting by insertions (``insertion sort''), as well as parallel 
procedures like Habermann's ``parallel neighbor sort''~\cite{Habermann} (a.k.a. ``odd-even sort''), 
which allows performing any number of adjacent swaps in a single step. 

Our formalism also models sorting procedures which are described via operations not on single 
elements, but on blocks of consecutive elements, e.g. block reversals or block transpositions. Such 
block-based sorting occurs, e.g., in the well-known toy problem known as ``pancake 
sorting''~\cite{BFR,Gates}, where the goal is to sort the input by a sequence of prefix reversals. 
Block operations are also the backbone of the sorting procedures studied in the context of genome 
rearrangement research (see, e.g.,~\cite{Bafna,CF,HomVat,Kececioglu}, or the surveys 
in~\cite{Bhatia,Fertin}).

Finally, we point out that our sorting formalism is closely related to the concept of sorting 
networks, an approach which dates back to the works of Knuth~\cite{Knuth68} and 
Tarjan~\cite{Tarjan} in late 1960s and early 1970s, respectively.
A sorting network is an acyclic directed graph whose every vertex represents a 
``sorting device'' (e.g., a stack, a queue,  a double-ended queue, or a more general 
``$\cC$-container'' considered by Albert et al.~\cite{containers}). An input sequence enters into 
the network from a designated source, its elements can move along the edges and be rearranged by 
the devices at the vertices, with the goal that the elements reach a designated sink in
ascending order. Since the action of most sorting devices considered in the literature can be 
modelled by a hereditary permutation classes, our sorting formalism can be seen as a serial sorting 
network (i.e., the network is a path) in which all the devices are of the same type.

The topic of sorting networks continues to be a major focus of research~\cite{Bona, Cerbai2, 
Cerbai1,EPG}, but we note that most of its previous results address the question of how many 
permutations can be sorted by a given fixed network and what these permutations look like, while 
our own focus is on finding the shortest serial network that sorts every permutation of a given 
length. Additionally, some authors consider sorting networks where the operation of the devices is 
restricted by the properties of the input sequence, e.g., by assuming that a stack may only hold a 
monotone sequence of values. These restricted devices no longer fit into our own formalism.

The aim of our paper is neither to improve the analysis of any specific sorting algorithm, nor to 
propose a new specific algorithm; instead, our goal is to answer the general question of which 
functions can occur as worst-case sorting times of hereditary permutation classes. Accordingly, 
we disregard constant multiplicative factors and focus on asymptotic behavior. Our results show  
that at this level of detail, the possible sorting times can be classified into a hierarchy with 
gaps between individual levels. Disregarding trivial extreme cases, a worst-case sorting time of a 
hereditary class is either $\Theta(n^2)$, or a function between $O(n)$ and $\Omega(\sqrt{n})$, or 
$O(\log^2 n)$, and for each of these cases we can provide a structural characterization of the 
corresponding hereditary classes. 

We remark that the assumption of heredity is substantial in our setting. If we were to consider 
sorting with an arbitrary set of generators, the sorting-time problem would be essentially reduced 
to the problem of determining the diameter of a Cayley graph in the symmetric group. This is a 
well-known group-theoretic open problem, with a conjecture of Babai and Seress~\cite{Babai} stating 
that the diameter is at most polynomial.

Let us now introduce the necessary definitions to state our results properly.

\subsection*{Notation, terminology, and the main result}

A \emph{permutation} of size $n$ is a sequence $\pi=(\pi(1),\pi(2),\dotsc,\pi(n))$ in which each 
number from the set $[n]=\{1,2,\dotsc,n\}$ appears exactly once. We let $\cS_n$ be the set of 
permutations of size $n$, $\cS_{\le n}$ the set of permutations of size at most $n$, and $\cS$ 
the set $\bigcup_{n\ge 1} \cS_n$.  We say that a 
permutation $\pi\in\cS_n$ \emph{contains} a permutation $\sigma\in\cS_k$ if $\pi$ contains a 
subsequence of length $k$ whose elements are in the same relative order as the elements of $\sigma$, 
or more formally, if there are indices $1\le i_1<i_2<\dotsb<i_k\le n$ such that $\pi(i_a)<\pi(i_b) 
\iff \sigma(a) <\sigma(b)$ for every $a,b\in[k]$. If $\pi$ does not contain $\sigma$, we say it 
\emph{avoids}~$\sigma$.

A \emph{hereditary permutation class} (or henceforth just \emph{class}) is a set $\cC\subseteq\cS$ 
such that for every $\pi\in\cC$ and every $\sigma$ contained in $\pi$, $\sigma$ is also in~$\cC$. 
For a permutation class $\cC$, we let $\cC_n$ denote the set $\cC\cap \cS_n$. 

For a pair of permutations $\sigma,\pi\in\cS_n$, their \emph{composition} $\sigma\circ\pi$ is the 
permutation $(\sigma(\pi(1)),\sigma(\pi(2)),\dotsc,\sigma(\pi(n)))$. For a pair of permutation 
classes $\cA$ and $\cB$, we write $\cA\circ\cB$ for $\{\alpha\circ\beta;\; \alpha\in\cA, 
\beta\in\cB\}$, $\cA^{\circ k}$ for the $k$-fold composition $\cA\circ\cA\circ\dotsb\circ\cA$, and 
$\cA^{*}$ for $\bigcup_{k\ge 1} \cA^{\circ k}$. Note that $\cA\circ\cB$, $\cA^{\circ k}$ and 
$\cA^{*}$ are again permutation classes. 

The \emph{inverse} of a permutation $\pi\in\cS_n$ is the permutation $\pi^{-1}\in\cS_n$ such that 
$\pi^{-1}\circ\pi$ is the identity permutation $(1,2,\dotsc,n)$.

Recall that the \emph{$\cC$-sorting time of $\pi$}, denoted $\st\cC\pi$, is the smallest $k$ such 
that $\pi^{-1}$ is in $\cC^{\circ k}$, or $+\infty$ if no such $k$ exists. For an integer $n$, the 
\emph{worst-case $\cC$-sorting time}, denoted $\wst\cC n$, is the maximum of $\st\cC\pi$ over all 
permutations $\pi$ of $[n]$.

We are now ready to recall the main result of this paper.
\begin{theorem}\label{thm-main}
Every permutation class $\cC$  falls into one of the following five categories:
\begin{itemize}
\item $\wst\cC n=+\infty$ for $n$ large enough,
\item $\wst\cC n=\Theta(n^2)$,
\item $\Omega(\sqrt{n}) \le \wst\cC n\le O(n)$,
\item $\Omega(\log n)\le \wst\cC n \le O(\log^2 n)$, or
\item $\wst\cC n=1$ for all $n\ge 2$.
\end{itemize}
\end{theorem}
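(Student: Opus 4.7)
The plan is to prove the theorem as a chain of four \emph{gap lemmas}, one between each pair of consecutive levels in the hierarchy. For each gap I would identify a structural dichotomy on $\cC$: a combinatorial condition on $\cC$ such that, when it holds, I can design a sorting strategy that achieves the faster upper bound, and when it fails, I can exhibit a family of permutations on which every $k$-step $\cC$-sorting attempt must use at least the slower lower bound number of steps. The overall structure would then be: check the first dichotomy; if $\cC$ fails it we are at the $\Theta(n^2)$ level (or above), otherwise proceed to the next dichotomy; iterate. The final theorem would then follow by combining the four dichotomies with the endpoint characterizations.

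First I would dispose of the two endpoints. For the $+\infty$ case, $\wst\cC n$ is finite for all sufficiently large $n$ iff $\cC^{*}=\cS$, since the inverses needed to sort long enough permutations otherwise would avoid every pattern in $\cC^*$. I would cite or prove the characterization (presumably in terms of $\cC$ containing all four ``monotone'' growth types or similar Erd\H{o}s--Szekeres-style witnesses). For $\wst\cC n = 1$ for $n\ge 2$, every inverse of every permutation of length $n$ must lie in $\cC_n$, which immediately forces $\cC=\cS$.

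For the three interior gaps I would set up structural criteria on $\cC$ in terms of the classes $\cCbub$, $\cCins$, $\cCpan$ and grid-like structure captured by $\Grid$/$\cd$/$\tcd$. The lower bounds in each case should come from counting: if $|\cC_n|$ grows too slowly, then $|\cC^{\circ k}_n|\le |\cC_n|^k$ cannot cover all $n!$ inverses unless $k$ is large (this gives the $\Omega(\log n)$ bound for level four and, more delicately, the $\Omega(\sqrt n)$ bound for level three via a finer incompressibility argument that forbids being covered by products of too-structured factors). The quadratic lower bound would come from an explicit adversary permutation requiring $\Omega(n^2)$ inversions-to-undo when $\cC$ consists essentially of bubble-sort-like operations. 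The upper bounds, ordered from easy to hard, are: $O(n^2)$ trivially whenever $\cC\supseteq$ a single nontrivial transposition family; $O(n)$ when $\cC$ contains a class like $\cCins$ or $\cCpan$ (a single $n$-step sweep suffices); and $O(\log^2 n)$ when $\cC$ contains a ``rich enough'' subclass (e.g., one whose grid-rank or twist-capacity is unbounded) so that a recursive divide-and-conquer / sorting-network emulation produces a depth-$O(\log^2 n)$ composition.

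The main obstacle will be the $O(\log^2 n)$ upper bound and its matching lower bound, i.e., the jump from the linear regime to the polylogarithmic one. Achieving $\log^2 n$ essentially requires simulating an AKS-style or Batcher-style sorting network using elements of $\cC$, which in turn requires identifying precisely the structural property that makes $\cC$ ``expressive enough'' to realize the wiring of such a network in a single $\cC$-step while losing only a logarithmic factor at each recursion level. Matching this with an $\Omega(\sqrt n)$ lower bound on all lesser classes is the other delicate point, because one needs to rule out any clever sublinear sorting scheme below the threshold, presumably via an entropy/enumeration argument $|\cC^{\circ k}_n|\le |\cC_n|^k < n!$ combined with a sharp enumerative bound on $|\cC_n|$ for the classes that fail the expressiveness criterion. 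Once these two dichotomies are in place, the remaining gaps (quadratic vs.\ linear, and polylog vs.\ trivial) follow by comparatively straightforward constructions and counting.
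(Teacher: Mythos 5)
Your overall scaffolding (dispose of the two endpoints, then establish a chain of gap lemmas, each driven by a structural dichotomy on $\cC$) matches the paper's organization, and some of your individual ideas are on target: the $\wst{\cC}{n}=1\iff\cC=\cS$ endpoint, the $\Omega(\log n)$ lower bound by comparing $|\cC_n|^k$ with $n!$ (the paper uses Marcus--Tardos for the bound $|\cC_n|\le c^n$), and the idea that the $O(\log^2 n)$ regime is reached by emulating a shallow sorting network. But two of your proposed mechanisms would not go through.

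The most serious gap is the lower bound $\Omega(\sqrt n)$. You propose to get it from a ``finer incompressibility argument,'' i.e.\ an entropy/enumeration bound of the form $|\cC_n|^k \ge n!$ combined with a sharp upper bound on $|\cC_n|$. For that to give $k=\Omega(\sqrt n)$ you would need $|\cC_n|=n^{O(\sqrt n)}$, i.e.\ subexponential growth, for every class outside the polylog regime. But by the Fibonacci dichotomy and its refinements, infinite permutation classes that are not of polynomial growth have growth rate at least the golden ratio, so no class has growth strictly between polynomial and exponential; and the classes at issue here (those avoiding all twelve classes in the paper's set $\bbX$) can still have exponential growth (their cells are allowed to avoid only a long monotone pattern like $\iota_K$ or $\delta_K$, giving cells of exponential growth). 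For such classes, counting yields only $\Omega(\log n)$, which is far too weak. The paper instead proves this lower bound by a topological argument that has no counting analogue: it shows that any $\bbX$-avoiding class admits a bounded gridding with structurally tame cells (Proposition~\ref{pro-avoid}, built on theorems of Vatter and Huczynska--Vatter), encodes a $t$-step $\cC$-sorting of a permutation $\pi$ as a \emph{sorting diagram} graph, proves that this graph can be drawn on a surface of genus $O(t)$ with $O(tn)$ crossings (Proposition~\ref{pro-embed}), bounds its tree-width by $O(t\sqrt n)$ via a genus--crossing--treewidth inequality (Proposition~\ref{pro-gctw}), and plays this off against permutations of tree-width $\Omega(n)$. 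None of this is present in your plan, and it is the decisive ingredient.

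A second, smaller gap is at the quadratic level. You propose to use the plain inversion count as an adversary potential (``inversions-to-undo''). This works for classes like $\cCbub$, but not in general: the class $\cT$ of cyclic adjacent transpositions contains permutations of the form $1\ominus\iota_c\ominus 1$, and a single composition with such a permutation changes the inversion number by $\Theta(n)$, so the plain potential decreases too fast. Nevertheless $\cT$ really does need $\Omega(n^2)$ steps. The paper handles this by introducing the \emph{reduced inversion number} $\rin(\pi)$, a rotation-and-reversal-insensitive variant, and showing that $\wst{\cC}{n}=\Omega(n^2)$ iff $\rin(\cC;n)$ is bounded. Your proposal would miss this refinement. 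You also leave the structural dichotomies vague (``grid-rank or twist-capacity''), whereas the paper's argument genuinely rests on named structure theorems (Atkinson--Beals for the $+\infty$ boundary, Homberger--Vatter's peg-class decomposition for the $O(n^2)$ upper bound, and the Vatter/Huczynska--Vatter results used in Proposition~\ref{pro-avoid}); without some substitute for those, the gap lemmas don't reduce to anything provable.
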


The rest of this paper is devoted to the proof of this theorem. After we introduce some more 
terminology and review some relevant facts about permutation classes (Section~\ref{sec-classes}), we 
will proceed by characterizing the classes with infinite sorting time in Section~\ref{sec-inf}. This 
is easily done, as it more or less directly follows from previous results on composition-closed 
permutation classes. We then show, in Section~\ref{sec-kva}, that if the sorting time is finite, it 
is at most $O(n^2)$. Next, in Section~\ref{sec-lin}, we establish the gap between quadratic and 
linear sorting times. Our main device is a numerical parameter called \emph{reduced inversion 
number} $\rin(\pi)$, which we introduce. We show that a class $\cC$ has  $\wst{\cC}{n}=\Omega(n^2)$ 
whenever $\rin(\cdot)$ is bounded on $\cC$, and $\wst{\cC}{n}=O(n)$ otherwise. 

The most difficult part of the proof appears in Section~\ref{sec-polylog}, where we show that there 
are twelve (or four, after we account for symmetries) minimal classes with sorting time $O(\log^2 
n)$, while any class that does not contain one of these twelve as a subclass has sorting time 
$\Omega(\sqrt{n})$. The proof of this lower bound combines a new structural result on classes 
avoiding the twelve `polylog' subclasses, an argument showing that sorting with such a class can be 
encoded into a specific graph which has a drawing on a surface of not-too-large genus with 
not-too-many crossings, and a bound on the tree-width of a graph that can be drawn on a surface of a 
given genus with a given number of crossings. 

The short Section~\ref{sec-all} contains the proof that any class not containing all permutations 
has sorting time $\Omega(\log n)$. This is a straightforward counting argument using the 
Marcus--Tardos theorem. We conclude in Section~\ref{sec-open} with open problems, chief of which is 
to close the gaps between $\Omega(\sqrt{n})$ and $O(n)$, and between $\Omega(\log n)$ and $O(\log^2 
n)$ implied by Theorem~\ref{thm-main}.  In fact, for all the classes whose sorting time we can 
establish asymptotically tightly, the sorting time is either $+\infty$, $\Theta(n^2)$, $\Theta(n)$, 
$\Theta(\log n)$ or~1.

\section{Permutation classes}\label{sec-classes}
We let $\Av(\pi^1,\pi^2,\dotsc,\pi^m)$ denote the class of permutations that avoid 
all the permutations $\pi^1,\dotsc,\pi^m$. We also use the special notation $\Inc$ for the class 
$\Av(21)$ of all the increasing permutations, and $\Dec$ for the class $\Av(12)$ of all the 
decreasing permutations. Let $\iota_k$ denote the identity permutation $(1,2,\dotsc,k)$, and let 
$\delta_k$ be the decreasing permutation $(k,k-1,\dotsc,1)$. 

It is often convenient to represent a permutation $\pi\in\cS_n$ by its \emph{diagram}, which is 
the point set $D(\pi)=\{(i,\pi(i); i\in[n]\}$. 

Apart from the inverse $\pi^{-1}$ introduced before, we will need several other operations 
on a permutation $\pi=(\pi(1),\dotsc,\pi(n))$. Its \emph{reverse}, denoted $\pi^r$, is the 
permutation $(\pi(n),\pi(n-1),\dotsc,\pi(1))$, its \emph{complement} $\pi^c$ is the permutation 
$(n+1-\pi(1),n+1-\pi(2),\dotsc,n+1-\pi(n))$. Finally, the \emph{flip} of $\pi$, denoted $\pi^f$, 
is defined as $\pi^f=\left((\pi^r)^{-1}\right)^r$.

In terms of permutation diagrams, the reverse corresponds to reflection over a vertical line, 
the complement corresponds to reflection over a horizontal line, the inverse corresponds to 
reflection over the diagonal line $y=x$, and flip corresponds to reflection over the line $y=n+1-x$. 

For a set of permutations $\cC$, we write $\cC^r=\{\pi^r;\; \pi\in\cC\}$, 
$\cC^c=\{\pi^c;\; \pi\in\cC\}$, $\cC^{-1}=\{\pi^{-1};\; \pi\in\cC\}$ and $\cC^f=\{\pi^f;\; 
\pi\in\cC\}$. Note that if $\cC$ is a class, then so are $\cC^r$, $\cC^c$, $\cC^{-1}$ and $\cC^f$. 

For a pair of permutations $\alpha\in\cS_k$ and $\beta\in\cS_\ell$, their \emph{direct sum} 
$\alpha\oplus\beta$ is the permutation 
$(\alpha(1),\alpha(2),\dotsc,\alpha(k),k+\beta(1),k+\beta(2),\dotsc,k+\beta(\ell))$, and their 
\emph{skew sum} $\alpha\ominus\beta$ is the permutation
$(\ell+\alpha(1),\ell+\alpha(2),\dotsc,\ell+\alpha(k),\beta(1),\beta(2),\dotsc,\beta(\ell))$. We let 
$\bigoplus^m\alpha$ denote the direct sum $\alpha\oplus\alpha\oplus\dotsb\oplus\alpha$ with $m$ 
summands, and $\bigominus^m\alpha$ is defined similarly. For a set of permutations $\cC$, its 
\emph{sum-closure} $\bigoplus\cC$ is the set of all the permutations that can be obtained as a
direct sum of a finite number of (not necessarily distinct) elements of~$\cC$. Similarly, we define 
the \emph{skew-closure} $\bigominus\cC$. If $\cC$ is a class, then so are $\bigoplus\cC$ 
and~$\bigominus\cC$.

Before we proceed, let us collect several simple observations related to sorting times of various 
classes.

\begin{observation}\label{obs-sym}
For any two permutations $\sigma$ and $\pi$ of size $n$, we have 
$(\sigma\circ\pi)^{-1}=\pi^{-1}\circ \sigma^{-1}$ and $(\sigma\circ\pi)^f=\pi^f\circ 
\sigma^f$. Consequently, for any permutation class $\cC$ and any permutation $\tau$, 
$\st\cC\tau=\st{\cC^{-1}}{\tau^{-1}}=\st{\cC^f}{\tau^f}$, and in particular, 
$\wst{\cC}{n}=\wst{\cC^{-1}}{n}=\wst{\cC^f}{n}$.
\end{observation}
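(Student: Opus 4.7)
My plan is to establish the two algebraic identities first, and then extract the sorting-time consequences as nearly immediate corollaries. The inverse identity $(\sigma\circ\pi)^{-1}=\pi^{-1}\circ\sigma^{-1}$ is the standard fact that inversion is an antihomomorphism on any group, and I would dispatch it in one line by checking that composing $\sigma\circ\pi$ on the right by $\pi^{-1}\circ\sigma^{-1}$ telescopes to the identity.

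The flip identity is slightly less transparent, and the crux of my plan is to first rewrite the flip in a more algebraic form. Writing $\delta=\delta_n$ for the reverse permutation on $[n]$, a one-line index computation gives $\pi^r=\pi\circ\delta$, since $(\pi\circ\delta)(i)=\pi(n+1-i)$. Plugging this into the definition $\pi^f=((\pi^r)^{-1})^r$ and using $\delta^{-1}=\delta$ yields the conjugation formula $\pi^f=\delta\circ\pi^{-1}\circ\delta$. With this formula in hand, the flip identity $(\sigma\circ\pi)^f=\pi^f\circ\sigma^f$ reduces to a single calculation: expand $(\sigma\circ\pi)^f=\delta\circ(\sigma\circ\pi)^{-1}\circ\delta=\delta\circ\pi^{-1}\circ\sigma^{-1}\circ\delta$, insert $\delta\circ\delta$ in the middle, and recognize the two halves as $\pi^f$ and $\sigma^f$.

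The sorting-time consequences then follow by applying each identity to a minimal factorization. If $\tau^{-1}=\sigma_1\circ\cdots\circ\sigma_k$ with $\sigma_i\in\cC$ witnesses $\st\cC\tau=k$, then inverting gives $\tau=\sigma_k^{-1}\circ\cdots\circ\sigma_1^{-1}$, a product of $k$ elements of $\cC^{-1}$, so $\st{\cC^{-1}}{\tau^{-1}}\le k$. The reverse inequality comes from the symmetric argument applied to $\cC^{-1}$ and $\tau^{-1}$ using $(\cC^{-1})^{-1}=\cC$. The flip analog is entirely parallel: apply the just-proved flip identity to the factorization, and use the observation (a direct consequence of the conjugation formula) that flip and inversion commute, i.e.\ $(\tau^{-1})^f=(\tau^f)^{-1}$. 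Finally, the worst-case equalities $\wst{\cC}{n}=\wst{\cC^{-1}}{n}=\wst{\cC^f}{n}$ are immediate, since $\tau\mapsto\tau^{-1}$ and $\tau\mapsto\tau^f$ are bijections of $\cS_n$ and so preserve maxima. There is no serious obstacle in this proof; the only moment requiring a small insight is recognizing the conjugation formula $\pi^f=\delta\circ\pi^{-1}\circ\delta$, which transparently explains why flip, just like inversion, is an antihomomorphism of composition.
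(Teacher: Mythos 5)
Your proof is correct, and since the paper states this as an Observation without giving a proof, there is no paper argument to diverge from. The route you chose is the natural one: reduce the flip to the conjugation formula $\pi^f=\delta\circ\pi^{-1}\circ\delta$ (with $\delta=\delta_n$, an involution), from which both the antihomomorphism property $(\sigma\circ\pi)^f=\pi^f\circ\sigma^f$ and the commutation $(\tau^{-1})^f=(\tau^f)^{-1}$ drop out in one line each; the sorting-time equalities then follow by transporting a minimal factorization across the bijection and using the involutive symmetry ($(\cC^{-1})^{-1}=\cC$, $(\cC^f)^f=\cC$) for the converse inequality, and the worst-case equalities follow because both maps are bijections of $\cS_n$. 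All the intermediate identities you invoke ($\pi^r=\pi\circ\delta$, $\delta^{-1}=\delta$, etc.) check out, so the argument is complete.
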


\begin{observation}\label{obs-comp}
If $\cA$ and $\cB$ are permutation classes such that $\cA\subseteq\cB^{\circ k}$ and $\pi$ is any 
permutation, then $\st{\cB}{\pi}\le k\cdot\st{\cA}{\pi}$, and hence $\wst{\cB}{n}\le k\cdot 
\wst{\cA}{n}$.
\end{observation}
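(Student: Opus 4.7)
The plan is to unpack the definitions of sorting time and class composition, and to chain the two decompositions together. The key identity the observation rests on is that composition of permutations of a fixed size is associative, so nested compositions can be flattened into a single composition of the appropriate length.

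First I would write $m = \st{\cA}{\pi}$ and, assuming $m$ is finite (otherwise the inequality is vacuous), use the definition of sorting time to express $\pi^{-1}$ as a composition $\alpha_1 \circ \alpha_2 \circ \dotsb \circ \alpha_m$ with each $\alpha_i \in \cA$. Here I should be explicit that, since $\pi^{-1}$ has some fixed size $n$, each $\alpha_i$ also has size $n$ (this is part of what $\pi^{-1} \in \cA^{\circ m}$ means).

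Next I would invoke the hypothesis $\cA \subseteq \cB^{\circ k}$ on each $\alpha_i$ individually: each $\alpha_i$ factors as $\beta_{i,1} \circ \dotsb \circ \beta_{i,k}$ with $\beta_{i,j} \in \cB$ (and all of size $n$). Substituting these factorizations into the expression for $\pi^{-1}$ and flattening the resulting composition yields $\pi^{-1}$ as a single composition of $mk$ permutations from $\cB$, of common size $n$. By definition this gives $\pi^{-1} \in \cB^{\circ mk}$, so $\st{\cB}{\pi} \le mk = k \cdot \st{\cA}{\pi}$. Taking the maximum over all $\pi \in \cS_n$ on both sides yields the corresponding inequality for $\wst{}{n}$.

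There is no real obstacle here beyond being careful about the convention that $\cA \subseteq \cB^{\circ k}$ provides, for each element of $\cA$, a factorization through permutations of the same size in $\cB$; once that convention is in place the proof is a one-line substitution. The observation is essentially a restatement of the fact that $\cB^{\circ k}$ is itself a composition-closed thickening of $\cB$, so inclusions of classes transfer directly to inequalities of sorting times.
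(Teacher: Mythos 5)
Your proof is correct; the paper states this observation without providing a proof, and your argument — write $\pi^{-1}$ as a length-$m$ composition of elements of $\cA$ (all of the same size $n$), replace each factor by a length-$k$ composition of elements of $\cB$ using $\cA\subseteq\cB^{\circ k}$, and flatten by associativity to get $\pi^{-1}\in\cB^{\circ mk}$ — is exactly the intended unwinding of the definitions. The remark about all factors sharing the common size $n$ is the one point that genuinely needs to be said, and you said it.
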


\begin{observation}\label{obs-r}
Let $\cC$ be a class containing $\Dec$ as a subclass. Then $\cC^r=\cC\circ\Dec\subseteq \cC^{\circ 
2}$, and in particular, by Observation~\ref{obs-comp}, $\wst{\cC}{n}\le 2\wst{\cC^r}{n}$. Similarly, 
$\wst{\cC}{n}\le 2\wst{\cC^c}{n}$.
\end{observation}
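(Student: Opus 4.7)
The plan is to establish the identity $\cC^r = \cC \circ \Dec$ by directly computing the composition $\pi \circ \delta_n$, and then to combine the containment $\Dec \subseteq \cC$ with Observation~\ref{obs-comp} to get the desired sorting-time bound.

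First, I would unfold the definitions: the class $\Dec$ contains, for each $n$, exactly one permutation, namely $\delta_n = (n, n-1, \dotsc, 1)$. Using the convention $(\sigma \circ \tau)(i) = \sigma(\tau(i))$ from Section~\ref{sec-classes}, a one-line computation gives
\[
(\pi \circ \delta_n)(i) \;=\; \pi(\delta_n(i)) \;=\; \pi(n+1-i),
\]
so $\pi \circ \delta_n = (\pi(n), \pi(n-1), \dotsc, \pi(1)) = \pi^r$. Since $\Dec_n = \{\delta_n\}$ and $\cC \circ \Dec$ is by definition formed only by compositions of same-length permutations, this identifies $\cC \circ \Dec$ with $\cC^r$.

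Second, I would invoke the hypothesis $\Dec \subseteq \cC$, which gives $\cC^r = \cC \circ \Dec \subseteq \cC \circ \cC = \cC^{\circ 2}$. Applying Observation~\ref{obs-comp} with $\cA = \cC^r$, $\cB = \cC$, and $k = 2$ then yields $\st{\cC}{\pi} \le 2 \st{\cC^r}{\pi}$ for every $\pi$, and hence $\wst{\cC}{n} \le 2 \wst{\cC^r}{n}$.

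Finally, for the analogous complement statement I would observe that $\pi^c(i) = n+1 - \pi(i) = \delta_n(\pi(i))$, i.e., $\pi^c = \delta_n \circ \pi$, so $\cC^c = \Dec \circ \cC \subseteq \cC^{\circ 2}$, and Observation~\ref{obs-comp} finishes the proof in the same way. There is no substantive obstacle in this argument; the only point that requires a moment of care is the order convention for composition, which must be tracked so that $\pi \circ \delta_n$ produces the reverse while $\delta_n \circ \pi$ produces the complement, and not the other way round.
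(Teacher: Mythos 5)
Your proof is correct and is essentially the intended argument; the paper states the observation without a separate proof, and the identity $\cC^r = \cC\circ\Dec$ it asserts is exactly what you verify, together with the analogous $\cC^c = \Dec\circ\cC$ and the appeal to Observation~\ref{obs-comp}. You are also right that tracking the composition convention is the only delicate point, and you have it the right way round: $\pi\circ\delta_n=\pi^r$ and $\delta_n\circ\pi=\pi^c$.
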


\subsection*{Griddings and grid classes}

We say that a set $P$ of points in the plane is in \emph{general position} if no two points of $P$ 
are on the same horizontal or vertical line. We say that two finite point sets $P$ and $Q$ in 
general position are \emph{order-isomorphic}, if there is a bijection from $P$ to $Q$ which 
preserves both the left-to-right and the bottom-to-top order of the points. Note that any finite set 
$P$ of points in general position is order-isomorphic to a permutation diagram of a unique 
permutation $\alpha$; we than say that $P$ \emph{induces a copy of $\alpha$}. 

\begin{figure}
	\centerline{\includegraphics{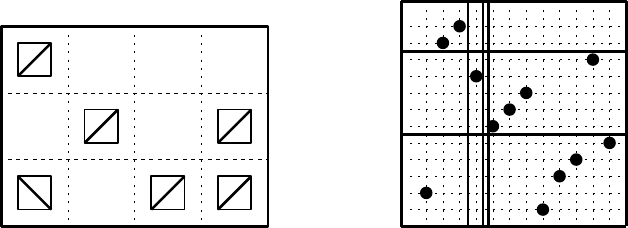}}
	\caption{Left: a $4\times 3$ gridding matrix $M$; by convention, we assume that the empty cells 
		represent the permutation class that only contains the empty permutation. Right: an example of an 
		$M$-gridded permutation, represented by its diagram.}\label{fig-grid}
\end{figure}

Let $\pi\in\cS_n$ be a permutation with a permutation diagram $D(\pi)$. A \emph{$k\times\ell$ 
gridding of $\pi$} is a set of $k+1$ vertical lines $v_0,v_1,\dotsc,v_k$ numbered left to right and 
$\ell+1$ horizontal lines $h_0,h_1,\dotsc,h_\ell$ numbered bottom to top, such that $D(\pi)$ is 
inside the rectangle bounded by $v_0$, $v_k$, $h_0$ and $h_\ell$, and no point of $D(\pi)$ lies on 
any of the $k+\ell$ lines of the gridding. For $i\in[k]$ and $j\in[\ell]$, the \emph{cell $(i,j)$} 
of the gridding, denoted is the rectangle bounded by the lines $v_{i-1}$, $v_i$, $h_{j-1}$ 
and~$h_j$.  A permutation together with its gridding is referred to as a \emph{gridded permutation}. 
If $\pi$ is a gridded permutation and $X$ a cell of its gridding, we let $\pi[X]$ denote the set 
$D(\pi)\cap X$, and the permutation induced by this set is simply referred to as the subpermutation 
of $\pi$ induced by~$X$. 

A \emph{gridding matrix} is a matrix $M$ whose every entry is a permutation class; see 
Figure~\ref{fig-grid} for an example. For consistency with the Cartesian coordinates we use for 
permutation diagrams, we will assume that rows in a matrix are numbered bottom to top, and we let 
$M_{i,j}$ denote the entry in column $i$ and row $j$ of the matrix~$M$. Suppose $M$ is a gridding 
matrix with $k$ columns and $\ell$ rows. An $M$-gridding of a permutation $\pi$ is a $k\times\ell$ 
gridding with the property that for every $(i,j)\in[k]\times[\ell]$, the cell $(i,j)$ of the gridding 
induces in $\pi$ a permutation belonging to the class~$M_{i,j}$. The \emph{grid class} determined by 
$M$, denoted $\Grid(M)$, is the set of those permutations $\pi$ that admit an $M$-gridding.

An important special case of grid classes are the so-called \emph{monotone juxtapositions}, whose 
gridding matrices have shape $2\times 1$ or $1\times 2$, and each of their two entries is $\Inc$ or 
$\Dec$. In particular, there are eight monotone juxtapositions.

\subsection{Some notable classes}
We will now introduce several permutation classes that will play an important role in our results. 
See Figure~\ref{fig-perms} for some examples and Table~\ref{tab-classes} for a complete list.

The class of \emph{layered permutations}, denoted $\cL$, is the sum-closure of the class $\Dec$. 
Equivalently, it can be defined as $\Av(231,312)$. We remark that sorting by layered permutations is 
equivalent by sorting with a series of so-called \emph{pop-stacks}.

The \emph{Fibonacci class}, denoted $\cF$, is the sum-closure of the set $\{1,21\}$, or equivalently 
the class of 321-avoiding layered permutations. Its name refers to the fact that $|\cF_n|$ is the 
$n$-th Fibonacci number. Sorting with the Fibonacci class $\cF$ corresponds to sorting where in each 
step we perform parallel transpositions of disjoint adjacent pairs of elements. This includes, as a 
special case, an algorithm known as odd-even sort (also known as parallel neighbor-sort), introduced 
and analyzed by Habermann~\cite{Habermann}, who showed that the algorithm requires $n$ rounds in 
the worst case. This implies that $\wst{\cF}{n}=O(n)$. This is asymptotically tight since, e.g., any 
permutation $\pi$ with $\pi(n)=1$ requires at least $n-1$ $\cF$-steps to be sorted. Thus, 
$\wst{\cF}{n}=\Theta(n)$.

\begin{figure}
	\centerline{\includegraphics[width=0.9\textwidth]{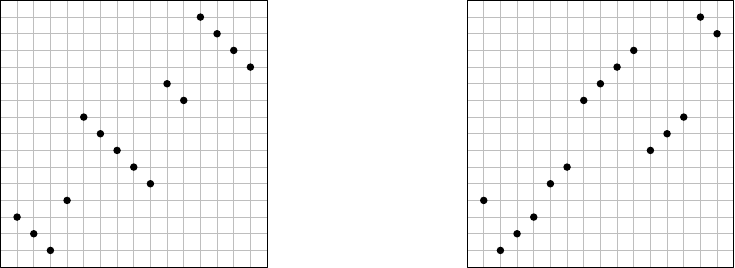}}
	\caption{From left to right: diagram of a layered permutation, diagram of a permutation from the 
		Fibonacci class $\cF$, and a diagram of a permutation from the class~$\PBT$.}\label{fig-perms}
\end{figure}

The \emph{rotation class} is the class $\cR:=\{\iota_a\ominus\iota_b;\; a,b\in\bbN_0\}$. Note that 
for $\rho\in\cR_n$ and $\pi\in \cS_n$, $\pi\circ\rho$ is a cyclic rotation of the sequence~$\pi$. We 
will also consider the class $\cRR:=\cR\cup \cR^r$, corresponding to rotations and their reversals. 
Both these classes are composition closed, and in particular have infinite sorting time, as they 
cannot sort any permutation that does not belong to the class.

The \emph{parallel block transposition class}, denoted $\PBT$, is the sum-closure of~$\cR$. To our 
knowledge, this class has not been considered before, but it emerges naturally in our results as 
one of the twelve minimal classes with polylogarithmic sorting time.

The class $\cCbub:=\{\iota_a\oplus 21\oplus\iota_b;\; a,b\in\bbN_0\}\cup\Inc$ corresponds to sorting 
by adjacent transpositions. This includes, e.g., the classical bubble sort algorithm. It can be 
easily seen that $\wst{\cCbub}{n}=\binom{n}{2}$. A slight modification yields the class
$\cT:=\{\iota_a\oplus 21\oplus\iota_b;\; a,b\in\bbN_0\}\cup\{1\ominus\iota_c\ominus 1;\; 
c\in\bbN_0\}\cup\Inc$, which corresponds to cyclic adjacent transpositions, where positions 1 and 
$n$ are treated as adjacent.

The class $\Fringe k:=\{\pi\oplus\iota_a\oplus\sigma;\; a\in\bbN_0, |\sigma|\le k, |\pi|\le k\}$ 
contains permutations in which only the prefix of length $k$ and the suffix of length~$k$ can be 
permuted. We also define $\RFringe k:=\Fringe{k}\cup(\Fringe{k})^r$.

The grid class 
$\cCpan=\Grid\left(\begin{smallmatrix}&\Inc\\\Dec&\end{smallmatrix}\right)$ 
corresponds to sorting where in each step, we reverse the prefix of the permutation being sorted. 
This is known in the literature as \emph{pancake sorting}. It is well known that
$\wst{\cCpan}{n}=\Theta(n)$. It is easy to see that the same asymptotic estimate also applies to 
$\cCpan^r$, $\cCpan^c$ and $\cCpan^f$.

The grid class 
\[\cCins=\Grid\begin{pmatrix}&&\Inc\\\one&&\\&\Inc&\end{pmatrix}\] 
corresponds to sorting where in each step, we take the first element of the input and insert it to 
an arbitrary position $i$, while moving the elements in positions $2,3,\dotsc,i$ one step to the 
left. This corresponds to the classical insertion sort algorithm. The standard application of the 
algorithm can sort any input in $n-1$ steps, by maintaining the invariant that after $k$ steps, 
the final $k+1$ elements are in increasing order; hence, $\wst{\cCins}{n}\le n-1$. This bound is 
tight, since any permutation $\pi$ with $\pi(n)=1$ requires $n-1$ steps to be sorted.

\begin{table}[t]
\[
\begin{array}[t]{| r |l|}
\hline
\text{Notation} &\text{Description}\\
\hline\hline
\cL^{\vphantom{I}}_{\vphantom{I}} & \bigoplus^{\vphantom{I}}_{\vphantom{I}} \Dec
\\[0pt]\hline
\cF^{\vphantom{I}}_{\vphantom{I}} & \bigoplus^{\vphantom{I}}_{\vphantom{I}} \{1, 12\} = \cL\cap\Av(321)
\\[0pt]\hline
\cR & \Grid\begin{pmatrix}\Inc&\\&\Inc\end{pmatrix}^{\vphantom{I}}_{\vphantom{I}}
\\[0pt]\hline
\cRR& \cR\cup {\cR^r}^{\vphantom{I}}_{\vphantom{I}}
\\[0pt]\hline
\PBT& \bigoplus\cR^{\vphantom{I}}_{\vphantom{I}}
\\[0pt]\hline
\cCbub & 
\Grid\begin{pmatrix}&&&\Inc\\&\one&&\\&&\one&\\\Inc&&&\end{pmatrix}^{\vphantom{I}}_{\vphantom{I}}
\\[0pt]\hline
\end{array}
\qquad\quad
\begin{array}[t]{|r | l|}
\hline
\text{Notation} &\text{Description}\\
\hline\hline
\cT & 
\cCbub\cup\Grid\begin{pmatrix}\one&&\\&\Inc&\\&&\one\end{pmatrix}^{\vphantom{I}}_{\vphantom{I}}
\\[0pt]\hline
\Fringe k & \Grid\begin{pmatrix}&&\cS_{\le k}\\&\Inc&\\\cS_{\le 
k}&&\end{pmatrix}^{\vphantom{I}}_{\vphantom{I}}
\\[0pt]\hline
\RFringe k & \Fringe k\cup {{\Fringe k}^r}^{\vphantom{I}}_{\vphantom{I}}
\\[0pt]\hline
\cCpan & \Grid\begin{pmatrix}&\Inc\\\Dec&\end{pmatrix}^{\vphantom{I}}_{\vphantom{I}}
\\[0pt]\hline
\cCins & \Grid\begin{pmatrix}&&\Inc\\\one&&\\&\Inc&\end{pmatrix}^{\vphantom{I}}_{\vphantom{I}}
\\[0pt]\hline
\end{array}
\]

 \caption{A summary of relevant permutation classes}\label{tab-classes}
\end{table}

\section{Classes that cannot sort}\label{sec-inf}
Let us say that a class $\cC$ \emph{cannot sort}, if $\wst{\cC}{n}=+\infty$ for some~$n$. Note that 
this is equivalent to $\cC^*_n\neq \cS_n$. Moreover, if $\cC^*_n$ is different from $\cS_n$ for 
some 
$n$, then for any $m\ge n$, $\cC^*_m$ is different from $\cS_m$, since $\cC^*$ is a 
permutation class. 
%

When seeking to characterize the maximal classes that cannot sort, we may restrict our attention to 
the classes $\cC$ that are \emph{composition closed}, that is, those that satisfy $\cC=\cC^*$. 
Composition closed permutation classes have been characterized by Atkinson and Beals~\cite{AB}. 
Translating their results into our terminology, we obtain the following proposition.

\begin{proposition}\label{pro-cannot}
A permutation class $\cC$ cannot sort if and only if it satisfies, for some $k\in\bbN$,  one of the 
following two 
conditions:
\begin{enumerate}
\item $\cC$ is a subclass of $\cRR\cup \cS_{\le k}$, or
\item $\cC$ is a subclass of $\RFringe{k}$.
\end{enumerate}
\end{proposition}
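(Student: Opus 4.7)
The plan is to reduce the statement to the classification of composition-closed permutation classes due to Atkinson and Beals~\cite{AB}, and then verify that their classification dictionary matches the two families in the proposition.

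First I would reformulate the problem in terms of the composition closure. Note that $\cC$ can sort a permutation $\pi$ of size $n$ if and only if $\pi^{-1}\in\cC^*$, so $\cC$ sorts every permutation of size $n$ if and only if $\cS_n\subseteq\cC^*$. Since $\cC^*$ is itself a permutation class (hence downward closed under pattern containment), it is either equal to $\cS$ or is a proper class that omits some permutation of every sufficiently large size. Therefore $\cC$ cannot sort if and only if $\cC^*$ is a proper subclass of $\cS$. In addition, $\cC^*$ is by construction composition closed, and the inclusion $\cC\subseteq\cC^*$ implies that any containment of the form $\cC^*\subseteq\cRR\cup\cS_{\le k}$ or $\cC^*\subseteq\RFringe{k}$ transfers to~$\cC$. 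So it suffices to classify proper composition-closed classes.

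For the easy ``if'' direction, I would verify that the classes $\cRR\cup\cS_{\le k}$ and $\RFringe{k}$ are themselves composition closed and proper. For $\cRR\cup\cS_{\le k}$: the slice of size $n\le k$ is just $\cS_n$ and composes to itself; the slice of size $n>k$ equals $\cRR_n$, which is precisely the dihedral group of order~$2n$ acting on $[n]$, and is therefore closed under composition. The class is proper for every $k$, since for $n$ large enough $|\cRR_n|=2n<n!=|\cS_n|$. For $\RFringe{k}$: once $n>2k$, the slice $\Fringe{k}_n$ is the subgroup isomorphic to $\cS_k\times\cS_k$ acting on the first and last $k$ positions; conjugation by the full reverse $\delta_n$ swaps these two blocks, so $\delta_n$ normalises $\Fringe{k}_n$ and the union $\RFringe{k}_n=\Fringe{k}_n\cup\Fringe{k}_n\cdot\delta_n$ is a subgroup of $\cS_n$. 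For smaller $n$ a direct check shows composition closure as well. The class is proper because for $n>2k$ it omits any permutation whose action on the middle block $\{k+1,\dots,n-k\}$ is nontrivial and non-reversing. Therefore $\cC^*\subseteq\cC^*_{\text{outer}}$ is proper and $\cC$ cannot sort.

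For the ``only if'' direction, I would invoke the theorem of Atkinson and Beals, which classifies all composition-closed permutation classes up to equality. Translated into our terminology, their theorem says that every composition-closed permutation class $\cD$ is either $\cS$ itself, or is contained in $\cRR\cup\cS_{\le k}$, or is contained in $\RFringe{k}$ for some $k\in\bbN$. Applying this to $\cD=\cC^*$, which is composition closed and, by the hypothesis that $\cC$ cannot sort, proper, yields the containment claimed in the proposition.

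The main obstacle is the last step: translating the statement and proof of Atkinson--Beals, whose formulation uses different primitives (originally stated in terms of transformation semigroups on $[n]$), into the clean two-family description in terms of $\cRR$ and $\RFringe{k}$, and making sure no further sporadic cases appear. Once the translation is carried out, the remaining verifications are the routine group-theoretic ones sketched above.
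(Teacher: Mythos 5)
Your proposal follows essentially the same route as the paper: reduce the question to the composition closure $\cC^*$, observe that $\cC$ cannot sort iff $\cC^*$ is a proper subclass of $\cS$, and then invoke the Atkinson--Beals classification of composition-closed classes, translated into the language of $\cRR$ and $\RFringe{k}$. The paper itself offers no more detail than this --- it states that the proposition is obtained by ``translating their results into our terminology'' --- so your account of the ``only if'' direction matches the paper exactly, and you are in fact more explicit in spelling out the ``if'' direction.

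One small factual slip in that extra detail: $\RFringe{k}_n$ is \emph{not} closed under composition in the intermediate range $k<n\le 2k$, contrary to your ``direct check'' claim. For instance $\RFringe{3}_4$ is precisely the set of permutations of $\cS_4$ that are sum- or skew-decomposable, which is all $22$ non-simple permutations; since $22\nmid 24$ this is not a subgroup, and one can directly compose two of its elements (e.g.\ $(1,3,4,2)\circ(2,1,3,4)=(3,1,4,2)$) to land outside the set. The argument is nonetheless easily repaired, because only the large slices matter: for any fixed $n>2k$, $\Fringe{k}_n$ is the subgroup $\cS_k\times\cS_k$ fixing the middle block pointwise, $\delta_n$ normalises it, and hence $\RFringe{k}_n$ is a proper subgroup of $\cS_n$. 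Thus if $\cC\subseteq\RFringe{k}$, then for $n>2k$ we have $(\cC^*)_n=\langle\cC_n\rangle\subseteq\RFringe{k}_n\subsetneq\cS_n$, which already gives $\wst\cC n=+\infty$. You should phrase the ``if'' direction this way rather than asserting composition closure at every level. (The class $\cRR\cup\cS_{\le k}$, by contrast, \emph{is} composition closed at every level, as each slice is either $\cS_n$ or the dihedral group of order $2n$, so that part of your verification is fine.)
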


\section{From infinity to \texorpdfstring{$O(n^2)$}{O(n\texttwosuperior)}}\label{sec-kva}

Let us now focus on showing that any class that can sort has at most quadratic sorting 
time. Our key concept will be the notion of \emph{peg class}, which is a grid class with a gridding 
matrix whose every row and column has exactly one nonempty cell, and each nonempty cell is either 
the singleton class $\{1\}$ or one of the two monotone classes $\Inc$ and~$\Dec$. 

Peg classes have been previously studied mostly in the context of enumeration~\cite{AABRV, HV, 
HomVat}. Indeed, it is known that a permutation class has polynomial growth rate if and only if it 
is a finite union of peg classes. For our purposes, we will use the following result.

\begin{fact}[\protect{Homberger--Vatter~\cite[Theorem 3.1]{HomVat}}]\label{fac-peg}
If a permutation class $\cC$ does not contain as a subclass any monotone juxtaposition, nor 
$\cF$ nor $\cF^r$, then $\cC$ is a finite union of peg classes. 
\end{fact}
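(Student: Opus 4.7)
The approach is to derive the peg decomposition in two stages. In the first stage, I would establish that $\cC$ is \emph{monotone griddable}, meaning $\cC \subseteq \Grid(M)$ for some matrix $M$ whose nonempty entries all lie in $\{\Inc, \Dec\}$. This is precisely the content of Huczy\'nska and Vatter's monotone griddability theorem, whose hypothesis is exactly the exclusion of $\cF$ and $\cF^r$ as subclasses. Intuitively, if $\cC$ were not monotone griddable, then every candidate decomposition of its permutations into monotone pieces would grow without bound, and by an Erd\H{o}s--Szekeres / Ramsey-style extraction one could recover either an infinite chain of alternating $21$-blocks (giving $\cF$) or of $12$-blocks (giving $\cF^r$).

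In the second stage, I would use the absence of monotone juxtapositions to sharpen $M$ into peg form. Suppose, towards a contradiction, that two entries in the same row of $M$ were both nonempty and monotone, say $M_{i,j}$ and $M_{i',j}$ with $i < i'$. Then one could select permutations of $\cC$ placing arbitrarily long monotone patterns simultaneously in both cells, thereby realizing a $1 \times 2$ monotone juxtaposition as a subclass of~$\cC$, contradicting the hypothesis. Hence each row of $M$ contains at most one monotone cell, and all other cells in that row may contribute only a bounded total number of points; I would replace each such small cell by a singleton class $\{1\}$, passing to a finite union of refined gridding matrices to cover all size configurations. The symmetric argument applied to columns then produces the desired peg structure, and the resulting union over all configurations remains finite.

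The main obstacle is coordinating the two stages so that the refinement of $M$ does not lose permutations of $\cC$. Specifically, a single permutation $\pi \in \cC$ may admit several $M$-gridings, and a careless replacement of small cells by singletons could break all of them simultaneously. To handle this I would fix, for each $\pi \in \cC$, a canonical $M$-gridding---say, one that greedily maximizes the sizes of the chosen monotone cells from left to right---show that the number of non-singleton cells in this canonical gridding is uniformly bounded in terms of the dimensions of $M$, and then enumerate over the finitely many placements of singleton pegs within the small cells. This exhibits $\cC$ as a subset of a finite union of peg classes, as required.
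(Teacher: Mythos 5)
The paper does not prove this statement itself: it is presented as a cited \emph{Fact}, attributed to Homberger and Vatter, and is used as a black box. So there is no internal proof to compare against; what follows is an assessment of your argument on its own terms.

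Stage~1 is fine: monotone griddability of a class avoiding $\cF$ and $\cF^r$ is the Huczynska--Vatter/Vatter griddability criterion, and invoking it here is legitimate. The difficulty is in Stage~2, where your central step does not hold as stated. You argue that if $M_{i,j}$ and $M_{i',j}$ are two monotone entries in the same row, then ``one could select permutations of $\cC$ placing arbitrarily long monotone patterns simultaneously in both cells, thereby realizing a $1\times 2$ monotone juxtaposition as a subclass of $\cC$.'' Neither implication is automatic. First, $\cC$ is only a \emph{subclass} of $\Grid(M)$; the matrix having two monotone entries in a row says nothing about whether any single $\pi\in\cC$ populates both cells with large patterns simultaneously (e.g.\ $\cC=\Inc$ sits inside the $1\times 2$ matrix $(\Inc\ \Inc)$ but contains no juxtaposition). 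You would need a genuine minimality/unavoidability argument on the gridding, and even minimality does not by itself force simultaneous largeness, since different $\pi$ might use different cells. Second, and more fundamentally, even when one $\pi$ does have large monotone patterns in both cells of the same row, those two sets of points need not induce the full $1\times 2$ monotone juxtaposition: since both cells occupy the same horizontal band, all that is guaranteed is that the left block's values and the right block's values live in the same range. If, say, every value in the left cell lies below every value in the right cell, the induced patterns are only direct sums of two increasing runs, which is a far smaller class than $\Grid(\Inc\ \Inc)$. What is actually needed is that the two cells' values interleave, i.e.\ form a large \emph{alternation}, and it is Fact~\ref{fac-alt} (the Huczynska--Vatter alternation bound) that converts ``no monotone juxtaposition'' into ``bounded alternations'' and hence into a bound on such interleaving. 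Your proof never makes this alternation link, and without it the contradiction you aim for does not arise.

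Finally, even granting that each row and column has at most one unbounded monotone cell, the reduction to peg classes (where, by definition, every row and column has \emph{exactly} one nonempty cell) requires more than replacing ``small'' cells by singletons: you must further refine the gridding so that each remaining point sits in its own row and column slot and show that the number of distinct refinements needed is bounded uniformly over $\cC$. Your canonical-gridding device gestures at this, but as written it neither controls how a small cell's boundedly many points are distributed among newly created rows and columns nor proves the uniform bound, so the ``finite union'' conclusion is not yet established.
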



It is not hard to see that any monotone juxtaposition has worst-case sorting time of order $O(\log 
n)$; we later prove this in Proposition~\ref{pro-lin2}. Also for the Fibonacci class $\cF$, we 
have already pointed out that $\wst{\cF}{n}=\Theta(n)$, and we may easily observe that 
$\wst{\cF^r}{n}=O(n)$.  

Thus, if a class $\cC$ has at least quadratic $\wst{\cC}{n}$, it cannot contain any monotone 
juxtaposition and any symmetry of $\cF$ as a subclass. Thus, by Fact~\ref{fac-peg}, such a class 
$\cC$ can be written as a finite union of \emph{peg classes}.

\begin{lemma}\label{lem-peg}
 Let $\cC$ be a peg class. If $\cC$ can sort, then $\wst{\cC}{n}=O(n^2)$.
\end{lemma}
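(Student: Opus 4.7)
The plan is to reduce the sorting time of $\cC$ to that of the bubble-sort class $\cCbub$, for which we already know $\wst{\cCbub}{n}=\binom{n}{2}$. Specifically, I aim to show that there exists a constant $c=c(\cC)$ such that $\cCbub\subseteq\cC^{\circ c}$. Once this containment is established, Observation~\ref{obs-comp} immediately gives $\wst{\cC}{n}\le c\cdot\wst{\cCbub}{n}=O(n^2)$.

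To establish the containment, I first invoke Proposition~\ref{pro-cannot}: since $\cC$ can sort, $\cC$ is not a subclass of $\cRR\cup\cS_{\le k}$, nor of $\RFringe{k}$, for any $k$. For a peg class, this has concrete consequences on the peg permutation underlying $\cC$: it cannot be a purely rotational pattern (which would put $\cC$ inside $\cRR$), nor a pattern with only one unbounded cell placed in a middle position flanked by singletons or by cells of bounded size (which would put $\cC$ inside some $\RFringe{k}$). Hence the peg permutation must contain a sufficiently non-trivial configuration --- for instance, a decreasing-labelled cell placed so that neither side of it is forced to be bounded, or a pair of increasing cells in a non-rotational arrangement. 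Using Observation~\ref{obs-sym} to factor out the symmetries of inverse, reverse, complement and flip, I would reduce the problem to a handful of canonical peg shapes. For each such shape, the task is to exhibit, for every choice of $a,b$, an explicit composition of at most $c$ permutations from $\cC$ realising the adjacent transposition $\iota_a\oplus 21\oplus\iota_b$. The freedom to choose arbitrary block lengths in a $\cC$-permutation (while preserving the peg pattern) lets us align the non-trivial cells with the target adjacent pair of values, and then cancel out the unwanted rearrangements elsewhere by composing with one or two further $\cC$-permutations whose block lengths are chosen to undo the previous action outside the targeted window.

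The hard part is expected to be precisely this case analysis: although any individual canonical peg shape is routine to handle, the cases must collectively cover every sortable peg class, and one must verify that a single constant $c$ works uniformly over all $n$, so that the resulting $O(n^2)$ bound is not polluted by hidden $n$-dependence in $c$. The reduction to $\cCbub$ is actually optional --- one could instead design a direct $O(n^2)$ sorting algorithm for $\cC$ out of the same building blocks, simulating bubble sort with each adjacent transposition costing $O(1)$ steps --- but routing through $\cCbub$ via Observation~\ref{obs-comp} nicely packages the combinatorial content of the proof into the single statement that every adjacent transposition lies in $\cC^{\circ O(1)}$.
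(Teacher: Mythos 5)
Your plan hinges on the claim that there is a constant $c=c(\cC)$ with $\cCbub\subseteq\cC^{\circ c}$, and this claim is false for several of the peg classes the lemma must cover; a counting bound makes the failure unavoidable. Take the peg class
$\cC_a=\Grid\left(\begin{smallmatrix}\one&&\\&\one&\\&&\Inc\end{smallmatrix}\right)$
from the paper's Figure~\ref{fig-pegs}. For every $n\ge 3$ the set $(\cC_a)_n$ contains exactly three permutations, namely $\iota_n$, $(n,1,2,\dotsc,n-1)$ and $(n,n-1,1,2,\dotsc,n-2)$, so $|(\cC_a^{\circ c})_n|\le 3^c$ for every $n$. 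On the other hand $|(\cCbub)_n|=n$, since $\cCbub$ contains the $n-1$ adjacent transpositions $\iota_a\oplus 21\oplus\iota_b$ together with $\iota_n$. Once $n>3^c$ the containment $\cCbub\subseteq\cC_a^{\circ c}$ is impossible, so no constant $c$ can work. Since $\cC_a$ can sort, it is a genuine instance of the lemma, and the same obstruction hits $\cC_b$ and $\cC_c$ (each has only $O(1)$ permutations per size). The intuition that one can produce a single adjacent transposition and then ``undo the previous action outside the targeted window'' in one or two more steps also fails concretely here: the collateral damage of any nontrivial $\cC_a$-step is a near-rotation, and undoing a rotation by further $\cC_a$-steps costs $\Theta(n)$, not $O(1)$. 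So the difficulty is not in the case analysis you postpone; the target claim of that case analysis is simply not true.

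The repair is to give up the constant-length reduction to $\cCbub$ and instead aim at a weaker, $n$-dependent embedding into a cheaper class. The paper targets the Fibonacci class $\cF$ of parallel adjacent transpositions: it identifies a small family of canonical peg classes (Figure~\ref{fig-pegs}), shows for each that a single step of $\cF$ on inputs of size $n$ can be simulated by $O(n)$ steps of the canonical class, i.e.\ $\cF_n\subseteq\cC_n^{\circ O(n)}$, and combines this with $\wst{\cF}{n}=O(n)$ to get $\wst{\cC}{n}=O(n^2)$. The $n$-dependence in the length of the composition is the whole point and is precisely what your formulation forbids. The remaining structural step — showing every sortable peg class (up to symmetry) contains one of the canonical classes — matches the spirit of your case analysis, but the target has to be this $O(n)$-step simulation of $\cF$, not an $O(1)$-step expression of an adjacent transposition. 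One further small point: Observation~\ref{obs-sym} handles only inverse and flip; reducing by reverse and complement requires Observation~\ref{obs-r}, which needs $\Dec\subseteq\cC$, so your symmetry reduction also needs to be stated a bit more carefully.
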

\begin{proof}
If the gridding matrix of $\cC$ contains a cell equal to $\Inc$ as well as a cell equal to $\Dec$, 
then $\cC$ contains $\cCpan=\Dec\oplus\Inc$ or one of its symmetries as a subclass, and with the 
help of Observation~\ref{obs-r} we easily conclude that $\wst{\cC}{n}=O(n)$.

Suppose then that at most one of the two classes $\Dec$ and $\Inc$ occurs in the gridding of 
$\cC$. If $\Dec$ occurs, then by Observation~\ref{obs-r}, $\wst{\cC}{n}\le 2\wst{\cC^r}{n}$, and 
$\cC^r$ is a peg class whose gridding matrix contains $\Inc$. It is therefore enough to
deal with peg classes whose gridding matrix contains $\Inc$ but not~$\Dec$. We assume from now on 
that $\cC$ is such a peg class.

Let $M$ be the gridding matrix of the class $\cC$, and assume that $M$ has been chosen to have the 
smallest possible number of rows and columns. In particular, this means that if $M_{i,j}=\Inc$ 
for some $i$ and $j$, then either the cell $M_{i,j}$ is in the last column or last row of $M$, or 
the cell $M_{i+1,j+1}$ is empty. If not, then we could delete column $i+1$ and row $j+1$ from $M$ 
to obtain a smaller gridding matrix representing the same class~$\cC$. By the same reasoning, if 
$M_{i,j}=\Inc$ for some $i>1$ and $j>1$, then $M_{i-1,j-1}$ is empty.

\begin{figure}
\begin{align*}
&\cC_a=\Grid\begin{pmatrix}
        \one&&\\&\one&\\&&\Inc
       \end{pmatrix},\,
\cC_b=\Grid\begin{pmatrix}
        \one&&\\&\Inc&\\&&\one
       \end{pmatrix},\,
\cC_c=\Grid\begin{pmatrix}
        &&\one\\\one&&\\&\Inc&
       \end{pmatrix}, \\
&\cC_d=\Grid\begin{pmatrix}
        &&&\Inc\\ &\one&&\\ &&\one&\\ \Inc&&&
       \end{pmatrix},\, 
\cC_e=\Grid\begin{pmatrix}
        \one&&&\\ &&&\Inc \\ &\one&& \\ &&\one&
       \end{pmatrix}.
\end{align*}
\caption{Five peg classes that can sort in quadratic time.}\label{fig-pegs}
\end{figure}

Consider now the five peg classes from Figure~\ref{fig-pegs}. We claim that all these classes, and 
therefore also their inverses and flips, have at most quadratic sorting time. 

Consider the class $\cC_a$, which corresponds to sorting where in every step we either 
move the first element of the input to the last position, shifting the remaining elements one step 
forward, or we move the first two elements to the end, while simultaneously changing their relative 
position. We may easily see that by a sequence of at most $n$ steps of $\cC_a$, we may simulate a 
single step of the class $\cF$ of parallel adjacent transpositions on an input of size~$n$. As we 
pointed out in Section~\ref{sec-classes},
$\wst{\cF}{n}=O(n)$. It follows that 
$\wst{\cC_a}{n}=\wst{\cC_a^{-1}}{n}=\wst{\cC_a^{f}}{n}=O(n^2)$. The same argument also applies to 
the remaining classes $\cC_b$, $\cC_c$, $\cC_d$ and~$\cC_e$.

\newcommand{\Mtl}{\ensuremath{M_\mathrm{TL}}}
\newcommand{\Mtr}{\ensuremath{M_\mathrm{TR}}}
\newcommand{\Mbl}{\ensuremath{M_\mathrm{BL}}}
\newcommand{\Mbr}{\ensuremath{M_\mathrm{BR}}}

Consider again the peg class $\cC$ from our lemma, with its gridding matrix~$M$. We will show that
$\cC$ either contains, as a subclass, one of the classes $\cC_a,\dotsc,\cC_e$ from 
Figure~\ref{fig-pegs} or its inverse or flip. We know that $M$ contains at least one entry equal to 
$\Inc$, so let us fix one such entry $M_{i,j}$. We now let $\Mtl$, $\Mtr$, $\Mbl$ and $\Mbr$ denote 
the submatrices of $M$ formed be the cells strictly to the top-left, top-right, bottom-left and 
bottom-right of the cell $M_{i,j}$, respectively; for instance $\Mbl$ is formed by the intersection 
of the leftmost $i-1$ columns and bottommost $j-1$ rows of~$M$.

If both $\Mtl$ and $\Mbr$ contain a non-empty cell, then $\cC$ contains $\cC_b$ as a subclass, so 
suppose this is not the case, and assume, without loss of generality, that $\Mbr$ only contains 
empty cells. If the grid class of $\Mtl$ contains $21$, then $\cC$ contains $\cC_a$ and we are 
done, so we assume from now on that $\Mtl$ generates a (possibly empty) subclass of~$\Inc$. It 
follows that at least one of $\Mbl$ and $\Mtr$ must contain a nonempty cell, otherwise $\cC$ would 
be a subclass of $\cR$ and it could not sort. 

Suppose without loss of generality that $\Mtr$ contains at least as many $\Inc$-cells as $\Mbl$, 
and 
in case neither $\Mtr$ nor $\Mbl$ contains any $\Inc$-cell, suppose that $\Mtr$ has at least 
one nonempty cell. Let $M_{i+1,k}$ be a nonempty cell in the column of $M$ right next 
to~$M_{i,j}$. Note that $k$ is at least $j+2$, since $\Mbr$ only has empty cells and if the cell 
$M_{i+1,j+1}$ were nonempty, we could combine it with $M_{i,j}$ contradicting the minimality 
of~$M$. 

Let $M_{\ell,j+1}$ be a nonempty cell of $M$ in the row above $M_{i,j}$. If $M_{\ell,j+1}$ is 
inside $\Mtl$, then $\cC$ contains $\cC_c$ as a subclass, so suppose that $M_{\ell,j+1}$ is inside 
$\Mtr$. If at least one of the two cells $M_{i+1,k}$ and $M_{\ell,j+1}$ is equal to $\Inc$, then 
$\cC$ contains either $\cC_c^f$ or $(\cC_c^f)^{-1}$ as a subclass and we are done. Suppose that this 
is not the case. Since $\cC$ is not a subclass of $\Fringe{k}$ for any fixed $k$, there must be 
either a nonempty cell in $\Mtl$ or a cell equal to $\Inc$ in~$\Mtr$. In the former case, $\cC$ 
contains as a subclass either $\cC_c$ or $\cC_e^f$. In the latter case, it contains either $\cC_d$ 
or 
a class symmetric to $\cC_c$ as a subclass. 

In all the cases considered, we found that $\cC$ contains a class symmetric to one of 
$\cC_a,\dotsc, \cC_e$ as a subclass, proving that $\wst{\cC}{n}=O(n^2)$ as claimed.
\end{proof}

Lemma~\ref{lem-peg} shows that a peg class that can sort has at most quadratic sorting time. 
We now turn our attention to classes that can be expressed as unions of two or more peg classes. 
The goal is to show that even if the individual peg classes cannot sort, as long as their union can 
sort, it has at most quadratic sorting time. 

\begin{lemma}\label{lem-union-peg}
Assume that a class $\cC$ is a subclass of $\cRR\cup\cS_{\le k}$ for some $k$, but is not a 
subclass of $\RFringe{k}$ for any~$k$, and that a class $\cD$ is a subclass of $\RFringe{k}$ for some 
$k$, but not a subclass of~$\cRR\cup\cS_{\le k}$ for any~$k$. Then $\wst{\cC\cup\cD}{n}=O(n^2)$.
\end{lemma}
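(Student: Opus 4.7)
The goal is to show that for all sufficiently large $n$, $(\cC \cup \cD) \cap \cS_n$ contains two specific permutations: an $n$-cycle (supplied by $\cC$) and a transposition (supplied by $\cD$). I will then invoke the classical fact that $\cS_n$ has Cayley-graph diameter $O(n^2)$ with respect to such a pair of generators, and since the argument can be implemented with positive words only, this gives $\cS_n \subseteq (\cC \cup \cD)^{\circ O(n^2)}$ and hence $\wst{\cC \cup \cD}{n} = O(n^2)$.

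First I would extract an $n$-cycle from $\cC$. The hypotheses force any $\pi \in \cC$ escaping $\RFringe{k'}$ to be a (possibly reverse) rotation $\iota_a \ominus \iota_b$ with $a, b \ge 1$ and $\max(a,b) > k'$, since $\cS_{\le k'} \subseteq \RFringe{k'}$ and a direct check shows $\iota_a \ominus \iota_b \in \RFringe{k'}$ iff $\min(a,b) = 0$ or $\max(a,b) \le k'$. Taking $k' \to \infty$ and applying heredity, $\cC$ contains one of $\iota_{n-1} \ominus \iota_1$ or $\iota_1 \ominus \iota_{n-1}$ in $\cS_n$ for every sufficiently large $n$; each is an $n$-cycle $c_n \in \cS_n$. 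Next I would extract a transposition from $\cD$. The hypotheses force $\cD$ to contain unboundedly large non-rotation elements, each of the form $\pi \oplus \iota_c \oplus \sigma$ (or its reverse) with $|\pi|, |\sigma| \le k$ and at least one of $\pi, \sigma$ not the identity. A pigeonhole argument over the finitely many $(\pi, \sigma)$-pairs of size $\le k$ fixes a pair $(\pi^*, \sigma^*)$, with at least one member non-identity, for which $\pi^* \oplus \iota_c \oplus \sigma^*$ lies in $\cD$ for arbitrarily large $c$. WLOG $\pi^*$ is non-identity, so it contains the pattern $21$; by heredity $d_n := 21 \oplus \iota_{n-2} \in \cD_n$ for every large enough $n$, corresponding to the transposition $(1,2) \in \cS_n$.

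With $c_n$ and $d_n$ in hand, I would invoke the classical fact that every $\rho \in \cS_n$ can be written as a positive word of length $O(n^2)$ in the generators $\{c_n, d_n\}$, implemented by $n$ bubble-sort passes, each a positive word of length $O(n)$ that realizes one scan of adjacent value-swaps via the identity $c_n^{i} d_n c_n^{-i} = (i{+}1, i{+}2)$, together with the reduction $c_n^{-k} = c_n^{n-k}$ to stay in positive generators. This yields $\pi^{-1} \in (\cC \cup \cD)^{\circ O(n^2)}$ for every $\pi \in \cS_n$ and gives the claimed bound.

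The hard part will be orchestrating all the symmetry cases: $\cC$'s witnesses could be reverse rotations, and $\cD$'s non-rotation witnesses could sit on the reverse side of $\RFringe{k}$ or have non-identity suffix rather than prefix, so each combination calls for a slightly different canonical pair $(c_n, d_n)$. Verifying that they all reduce to the canonical case via the symmetries of Observations~\ref{obs-sym} and~\ref{obs-r}---the latter being applicable whenever needed, since reverse-rotation or reverse-fringe witnesses always contain long decreasing subsequences and thereby place $\Dec$ inside $\cC \cup \cD$---will require some careful bookkeeping, but it does not change the core argument outlined above.
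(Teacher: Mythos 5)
Your overall plan---extract a rotation-type generator from $\cC$ and a transposition-type generator from $\cD$, then express every permutation as a positive word of length $O(n^2)$ via bubble sort---is essentially the same idea the paper uses; the paper simply packages it as showing $\cF_n\subseteq(\cC\cup\cD)^{\circ O(n)}$ and then invoking $\wst{\cF}{n}=O(n)$. Your extraction of the cycle witness from $\cC$ and the pigeonhole extraction of $21\oplus\iota_{n-2}$ from $\cD$ match the paper's extraction of the two peg subclasses $\Grid\left(\begin{smallmatrix}\one&\\&\Inc\end{smallmatrix}\right)$ and $\Grid\left(\begin{smallmatrix}&&\Inc\\\one&&\\&\one&\end{smallmatrix}\right)$ up to symmetry.

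The gap is in the symmetry bookkeeping. Observation~\ref{obs-r} reverses $\cC\cup\cD$ \emph{as a whole}, replacing it by $\cC^r\cup\cD^r$; it never lets you reverse $\cC$ while leaving $\cD$ alone. So in the mixed case---say $\cC$ only supplies reverse rotations, e.g.\ $\{\delta_a\oplus\delta_1;a\ge 1\}\subseteq\cC$ but no $\iota_a\ominus\iota_b$ of unbounded size, while $\cD$ supplies a forward fringe witness $\{21\oplus\iota_c;c\}$---applying Observation~\ref{obs-r} just lands you in the mirror-image mixed case with $\cC^r$ now forward and $\cD^r$ now reverse, and the free symmetries of Observation~\ref{obs-sym} (inverse, flip) preserve the orbits $\{\iota_a\ominus\iota_1,\iota_1\ominus\iota_b\}$ and $\{\delta_a\oplus\delta_1,\delta_1\oplus\delta_b\}$ separately, so they cannot convert a reverse-rotation $\cC$ into a forward-rotation one either. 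In such a mixed case, $\cC\cup\cD$ genuinely contains no $n$-cycle: every $\delta_a\oplus\delta_b$ is an involution, and no element of $\RFringe{k}$ is an $n$-cycle once $n$ is large. The fix (which the paper's terse ``we may then easily check'' is implicitly relying on) is to note that $\Dec\subseteq\cC$ here by heredity and that $\delta_n\circ(\delta_{n-1}\oplus\delta_1)=\iota_{n-1}\ominus\iota_1$, so the $n$-cycle lies in $\cC^{\circ 2}$ rather than in $\cC$; after that one extra composition your bubble-sort count still gives $O(n^2)$. But this is a missing idea in your write-up, not mere bookkeeping: symmetry alone does not reduce the mixed cases to the canonical one.
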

\begin{proof}
By assumption, $\cC$ contains infinitely many permutations that belong to $\cRR$ but not to 
$\RFringe{0}=\Av(21)\cup\Av(12)$. Any such permutation has the form $\iota_a\ominus\iota_b$ or 
$\delta_a\oplus\delta_b$ for some $a,b>0$. It follows that $\cC$ contains as a subclass the peg 
class 
$\Grid\left(\begin{smallmatrix} \one&\\&\Inc\end{smallmatrix}\right)$ or one of its symmetries.

Similarly, the class $\cD$ contains, for some $k$, infinitely many permutations belonging to 
$\RFringe{k}$ but not to~$\cRR$. From this, we easily deduce that $\cD$ contains as a subclass the 
peg class $\Grid\left(\begin{smallmatrix}&&\Inc\\ \one&&\\ &\one&\end{smallmatrix}\right)$ or one of 
its symmetries.

We may then easily check that any permutation from the class $\cF_n$ can be obtained by composing 
$O(n)$ permutations from $\cC\cup\cD$, implying that $\wst{\cC\cup\cD}{n}=O(n^2)$.
\end{proof}

Combining the previous arguments, we reach the main result of this section.
\begin{proposition}
A class $\cC$ that can sort satisfies $\wst{\cC}{n}=O(n^2)$.
\end{proposition}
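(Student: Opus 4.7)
My plan is to case-split on what $\cC$ contains, so that each case reduces to one of the prior results (Fact~\ref{fac-peg}, Lemma~\ref{lem-peg}, Lemma~\ref{lem-union-peg}, Proposition~\ref{pro-cannot}). The underlying monotonicity I rely on throughout is that if $\cD\subseteq\cC$, then $\cD\subseteq\cC^{\circ 1}$, so Observation~\ref{obs-comp} gives $\wst{\cC}{n}\le\wst{\cD}{n}$: having more permutations available can only help sorting.

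First, if $\cC$ contains as a subclass some monotone juxtaposition, or one of $\cF, \cF^r$, then the sorting time of that subclass is already $O(\log n)$ or $O(n)$ (the juxtaposition bound is the forthcoming Proposition~\ref{pro-lin2}; the Fibonacci bound is recorded in Section~\ref{sec-classes}), and the monotonicity above gives $\wst{\cC}{n}=O(n^2)$ with room to spare. Otherwise, Fact~\ref{fac-peg} applies and expresses $\cC$ as a finite union of peg classes $\cC=\cC_1\cup\dotsb\cup\cC_m$. If some $\cC_i$ can sort, Lemma~\ref{lem-peg} gives $\wst{\cC_i}{n}=O(n^2)$ and monotonicity finishes the case.

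The remaining case is that $\cC$ is a finite union of peg classes, none of which can sort individually. By Proposition~\ref{pro-cannot}, each $\cC_i$ is contained in $\cRR\cup\cS_{\le k_i}$ or in $\RFringe{k_i}$ for some~$k_i$; I assign each $\cC_i$ to one of two groups (breaking ties arbitrarily) and let $\cC_I$ be the union of those of the first type and $\cC_{II}$ the union of those of the second type, so $\cC=\cC_I\cup\cC_{II}$. Since each group is finite, $\cC_I\subseteq\cRR\cup\cS_{\le K}$ and $\cC_{II}\subseteq\RFringe{K'}$ for suitable constants $K,K'$. The main point is now to verify the non-degeneracy hypotheses of Lemma~\ref{lem-union-peg}: if one of the groups were empty, or if $\cC_I$ were itself a subclass of $\RFringe{k}$ for some $k$, or $\cC_{II}$ a subclass of $\cRR\cup\cS_{\le k}$ for some $k$, then $\cC$ itself would be a subclass of $\cRR\cup\cS_{\le k^*}$ or of $\RFringe{k^*}$ for a suitable $k^*$, contradicting the assumption that $\cC$ can sort via Proposition~\ref{pro-cannot}. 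With the hypotheses verified, Lemma~\ref{lem-union-peg} delivers $\wst{\cC}{n}=O(n^2)$.

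The only real obstacle is this last bookkeeping step of checking that the splitting $\cC=\cC_I\cup\cC_{II}$ is non-degenerate in the sense required by Lemma~\ref{lem-union-peg}; everything else is a direct application of previously established tools. I expect no technical hurdles beyond spelling out the implications of Proposition~\ref{pro-cannot} in the contrapositive form sketched above.
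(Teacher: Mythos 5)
Your proof is correct and follows essentially the same route as the paper's own argument: handle the monotone-juxtaposition / Fibonacci case first, invoke Fact~\ref{fac-peg} to reduce to a finite union of peg classes, dispose of the case where some peg class can sort via Lemma~\ref{lem-peg}, and use Lemma~\ref{lem-union-peg} for the remaining case. The only (cosmetic) difference in the last step is that the paper selects two individual peg classes $\cC_i$ and $\cC_j$ as witnesses of the two types and concludes via the monotonicity $\wst{\cC}{n}\le \wst{\cC_i\cup\cC_j}{n}$, whereas you aggregate all peg classes into two groups $\cC_I$ and $\cC_{II}$ and apply Lemma~\ref{lem-union-peg} directly to $\cC=\cC_I\cup\cC_{II}$; both routes verify the same non-degeneracy conditions via the contrapositive of Proposition~\ref{pro-cannot}, so the two proofs are interchangeable.
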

\begin{proof}
Suppose that $\cC$ can sort. If $\cC$ contains a monotone juxtaposition or any symmetry of the 
class $\cF$ as a subclass, then its worst-case sorting time is~$O(n)$. Otherwise, $\cC$ can be 
written as a union of finitely many peg classes $\cC_1,\dotsc,\cC_t$, as shown by Albert et al. and 
Homberger--Vatter~\cite{AABRV,HomVat}. If any of the peg classes $\cC_i$ can sort, then 
$\wst{\cC}{n}\le\wst{\cC_i}{n}=O(n^2)$ by Lemma~\ref{lem-peg}. Suppose then that none of the peg 
classes $\cC_1,\dotsc,\cC_t$ can sort, and in particular, by Proposition~\ref{pro-cannot}, each 
$\cC_i$ is either a subclass of $\cRR\cup\cS_{\le k}$ or a subclass of $\RFringe{k}$ for some~$k$. 

If all of $\cC_1,\dotsc,\cC_t$ are subclasses of $\RFringe{k}$, or all of them are 
subclasses of $\cRR\cup\cS_{\le k}$, then $\cC$ cannot sort by Proposition~\ref{pro-cannot}. 
Otherwise there is a $\cC_i$ which is not a subclass of $\cRR\cup\cS_{\le k}$ for any $k$,  and a 
$\cC_j$ which is not a subclass of $\RFringe{k}$ for any $k$, and by Lemma~\ref{lem-union-peg}, 
$\wst{\cC}{n}\le\wst{\cC_i\cup\cC_j}{n}=O(n^2)$.
\end{proof}

\section{From quadratic to linear}\label{sec-lin}

Recall that $\cT$ denotes the class of cyclic adjacent transpositions. To show there is a gap 
between quadratic and linear worst-case sorting times, and to describe the 
classes with quadratic time, we need to introduce another concept: the \emph{reduced inversion 
number} of a permutation $\pi$, denoted $\rin(\pi)$, is the smallest integer $k\in\bbN_0$ such that 
$\pi$ belongs to $\cT^{\circ k}\circ\cRR$. 

The reduced inversion number $\rin(\pi)$ is inspired by the (better-known) \emph{inversion number} 
of a permutation $\pi$, which can be defined as the smallest $k$ such that $\pi$ belongs 
to~$\cCbub^{\circ k}$. Observe that the inversion number is an upper bound for the reduced inversion 
number.


\begin{lemma}\label{lem-rotrin}
 For any $\alpha,\beta\in\cRR_n$ and any $\pi\in \cS_n$, 
$\rin(\alpha\circ\pi\circ\beta)=\rin(\pi)$. 
\end{lemma}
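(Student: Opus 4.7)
The plan is to exploit the group-theoretic structure of $\cRR_n$: for each fixed $n$, the set $\cRR_n$ is the dihedral group of order $2n$, acting on $[n]$ as the symmetry group of the $n$-vertex cycle whose edges are the cyclically adjacent pairs $\{1,2\},\{2,3\},\dotsc,\{n-1,n\},\{n,1\}$. In particular $\cRR_n$ is closed under inverses, and on the other side the elements of $\cT_n$ are precisely the identity together with the transpositions of cyclically adjacent elements of $[n]$ — that is, the ``edges'' of this cycle.

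The central step I would verify is that $\cRR_n$ normalizes $\cT_n$ under conjugation. For any $\alpha\in\cRR_n$ and any transposition $\tau=(a,b)\in\cT_n$, the conjugate $\alpha\tau\alpha^{-1}$ is the transposition $(\alpha(a),\alpha(b))$, and the pair $\{\alpha(a),\alpha(b)\}$ is again cyclically adjacent because $\alpha$ is a symmetry of the cycle. Hence $\alpha\tau\alpha^{-1}\in\cT_n$, which rearranges to $\alpha\circ\tau=\tau'\circ\alpha$ for some $\tau'\in\cT_n$; equivalently, $\alpha\circ\cT_n=\cT_n\circ\alpha$ as subsets of $\cS_n$. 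An easy induction then yields $\alpha\circ\cT^{\circ k}_n=\cT^{\circ k}_n\circ\alpha$ for every $k\ge 0$.

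From here the lemma will fall out quickly. For the upper bound $\rin(\alpha\circ\pi\circ\beta)\le\rin(\pi)$, take a decomposition $\pi=\tau_1\circ\dotsb\circ\tau_k\circ\gamma$ with $\tau_i\in\cT_n$ and $\gamma\in\cRR_n$; push $\alpha$ from the left through the $k$ factors of $\cT_n$ using the commutation relation above, and then absorb the resulting product $\alpha'\circ\gamma\circ\beta$ (with $\alpha'\in\cRR_n$) into a single element of $\cRR_n$ using composition-closure of $\cRR$. This places $\alpha\circ\pi\circ\beta$ in $\cT^{\circ k}\circ\cRR$, as required. The reverse inequality comes for free by applying the inequality just proved to $\alpha\circ\pi\circ\beta$ with the pair $(\alpha^{-1},\beta^{-1})\in\cRR_n\times\cRR_n$.

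I do not anticipate a serious obstacle. The only step that needs genuine verification is the conjugation-invariance of $\cT_n$ under $\cRR_n$, and even this reduces to checking that both the rotations in $\cR_n$ and their reversals in $\cR^r_n$ preserve the cyclic adjacency relation on $[n]$, which is a short calculation once the explicit forms of these permutations are unpacked.
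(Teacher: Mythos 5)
Your argument is correct and follows essentially the same route as the paper's: establish that $\cRR$ commutes with $\cT$ (the paper asserts $\cRR\circ\cT=\cT\circ\cRR$; you supply the stronger pointwise version $\alpha\circ\cT_n=\cT_n\circ\alpha$ via conjugation in the dihedral group), combine with closure of $\cRR$ under composition to get $\rin(\alpha\circ\pi\circ\beta)\le\rin(\pi)$, and obtain the reverse inequality by applying the same bound with $\alpha^{-1},\beta^{-1}\in\cRR$. Your dihedral-conjugation argument merely makes explicit a step the paper declares ``easy to see.''
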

\begin{proof}
First, notice that $\cRR\circ\cT=\cT\circ\cRR$; in other words, a permutation obtained by composing 
a rotation (with optional reversal) with a cyclic transposition can also be obtained by performing 
a cyclic transposition first followed by a rotation (with optional reversal), and vice versa. It is 
also easy to see that $\cRR\circ\cRR=\cRR$, that is, the class $\cRR$ is closed under compositions.

Suppose now that we are given $\alpha$, $\beta$ and $\pi$ as above, and let $k:=\rin(\pi)$. Let 
$\sigma$ be the permutation $\alpha\circ\pi\circ\beta$. Then $\pi$ belongs to $\cT^{\circ 
k}\circ\cRR$, and hence $\sigma$ belongs to $\cRR\circ\cT^{\circ k}\circ\cRR^{\circ 2}=\cT^{\circ 
k}\circ\cRR$. This shows that $\rin(\sigma)\le \rin(\pi)$. Conversely, $\pi$ is equal to 
$\alpha^{-1}\circ\sigma\circ\beta^{-1}$, and since $\alpha^{-1}$ and $\beta^{-1}$ both belong to 
$\cRR$, we may repeat the above argument to show that $\rin(\pi)\le \rin(\sigma)$. We conclude that 
$\rin(\pi)=\rin(\sigma)$, as claimed.
\end{proof}

\begin{lemma}\label{lem-rincompose}
For any $\sigma,\pi\in \cS_n$, 
\[
\rin(\pi\circ\sigma)\le \rin(\pi) + \rin(\sigma).
\]
\end{lemma}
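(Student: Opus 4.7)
The plan is to factor $\pi$ and $\sigma$ in the canonical form guaranteed by the definition of $\rin$, and then rearrange the resulting product using the commutation identities developed at the start of the proof of Lemma~\ref{lem-rotrin}, namely $\cRR \circ \cT = \cT \circ \cRR$ and $\cRR \circ \cRR = \cRR$.

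Setting $k := \rin(\pi)$ and $\ell := \rin(\sigma)$, by definition I can write
\[
\pi = \tau_1 \circ \tau_2 \circ \cdots \circ \tau_k \circ \alpha \quad\text{and}\quad \sigma = \tau'_1 \circ \tau'_2 \circ \cdots \circ \tau'_\ell \circ \beta,
\]
where each $\tau_i$ and $\tau'_j$ lies in $\cT$ and $\alpha,\beta \in \cRR$. Composing gives an expression for $\pi \circ \sigma$ as a product of the $k + \ell$ cyclic-transposition factors, with a single intrusive $\cRR$-element $\alpha$ stuck between the $\tau_i$'s and $\tau'_j$'s. The goal is to migrate $\alpha$ all the way to the right, where it can be absorbed into $\beta$.

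The main (and essentially only) tool is the identity $\cRR \circ \cT = \cT \circ \cRR$: applying it once rewrites $\alpha \circ \tau'_1$ as $\widetilde\tau_1 \circ \alpha_1$ with $\widetilde\tau_1 \in \cT$ and $\alpha_1 \in \cRR$. Iterating $\ell$ times pushes the rotation past every $\tau'_j$, yielding
\[
\alpha \circ \tau'_1 \circ \cdots \circ \tau'_\ell = \widetilde\tau_1 \circ \cdots \circ \widetilde\tau_\ell \circ \alpha_\ell
\]
for suitable $\widetilde\tau_j \in \cT$ and $\alpha_\ell \in \cRR$. Plugging this back into the product, and using $\cRR \circ \cRR = \cRR$ to merge $\alpha_\ell \circ \beta$ into a single $\gamma \in \cRR$, I obtain
\[
\pi \circ \sigma = \tau_1 \circ \cdots \circ \tau_k \circ \widetilde\tau_1 \circ \cdots \circ \widetilde\tau_\ell \circ \gamma \in \cT^{\circ(k+\ell)} \circ \cRR,
\]
which is precisely the bound $\rin(\pi \circ \sigma) \le k + \ell$. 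There is no real obstacle beyond invoking the two commutation identities already established in the previous lemma; the proof is a short bookkeeping calculation.
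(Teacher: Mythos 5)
Your proof is correct and follows essentially the same route as the paper's: the paper simply compresses your element-by-element commuting of the $\cRR$-factor past the $\cT$-factors into the single set-level identity $\cT^{\circ k}\circ\cRR\circ\cT^{\circ\ell}\circ\cRR=\cT^{\circ(k+\ell)}\circ\cRR$, justified by the same two identities $\cRR\circ\cT=\cT\circ\cRR$ and $\cRR\circ\cRR=\cRR$ from the proof of Lemma~\ref{lem-rotrin}.
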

\begin{proof}
 Let $k:=\rin(\sigma)$ and $\ell:=\rin(\pi)$. Then $\sigma\circ\pi$ belongs to the class 
$\cT^{\circ k}\circ\cRR\circ\cT^{\circ\ell}\circ\cRR=\cT^{\circ(k+\ell)}\circ\cRR$, hence 
$\rin(\sigma\circ\pi)\le k+\ell$.
\end{proof}

For a class $\cC$ and $n\in\bbN$, let us define $\rin(\cC;n):=\max\{\rin(\sigma);\; 
\sigma\in\cC_n\}$. As a direct consequence of Lemma~\ref{lem-rincompose}, we obtain the following 
relationship between sorting time and reduced inversion number.

\begin{corollary}\label{cor-rincompose}
For any class $\cC$ and any $\pi\in \cS_n$,
\[
\st{\cC}{\pi}\ge \frac{\rin(\pi)}{\rin(\cC;n)}.
\]
\end{corollary}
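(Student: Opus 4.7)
The plan is to unfold an optimal $\cC$-sorting factorization of $\pi$ and bound $\rin(\pi)$ term-by-term via Lemma~\ref{lem-rincompose}. Setting $k := \st{\cC}{\pi}$ and assuming $k < \infty$ (otherwise the bound is trivial), by definition we obtain a factorization $\pi^{-1} = \sigma_1 \circ \cdots \circ \sigma_k$ with each $\sigma_i \in \cC_n$, so inverting gives $\pi = \sigma_k^{-1} \circ \cdots \circ \sigma_1^{-1}$.

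Iterating Lemma~\ref{lem-rincompose} along this factorization yields
\[
\rin(\pi) \;\le\; \sum_{i=1}^k \rin(\sigma_i^{-1}).
\]
The remaining step is to bound each $\rin(\sigma_i^{-1})$ by $\rin(\cC;n)$. Since $\sigma_i^{-1}$ need not lie in $\cC_n$, I will first establish that $\rin$ is invariant under taking inverses, whence $\rin(\sigma_i^{-1}) = \rin(\sigma_i) \le \rin(\cC;n)$ directly.

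The inverse invariance $\rin(\pi) = \rin(\pi^{-1})$ rests on three easy observations: (a) $\cT$ is inverse-closed because every one of its permutations swaps two specific entries and fixes the rest, hence is an involution; (b) $\cRR$ is inverse-closed by direct verification, since $(\iota_a \ominus \iota_b)^{-1} = \iota_b \ominus \iota_a \in \cR$ and the reverse case is analogous; and (c) $\cRR \circ \cT = \cT \circ \cRR$, already recorded in the proof of Lemma~\ref{lem-rotrin}. Combining these, $\pi \in \cT^{\circ r} \circ \cRR$ if and only if $\pi^{-1} \in \cRR \circ \cT^{\circ r} = \cT^{\circ r} \circ \cRR$, so indeed $\rin(\pi) = \rin(\pi^{-1})$.

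Putting everything together, $\rin(\pi) \le k \cdot \rin(\cC;n) = \st{\cC}{\pi} \cdot \rin(\cC;n)$, which rearranges to the desired inequality (with the edge case $\rin(\cC;n) = 0$ forcing $\cC_n \subseteq \cRR_n$ and hence $\st{\cC}{\pi}$ either infinite or $\rin(\pi) = 0$, so the bound holds trivially). I do not foresee any real obstacle beyond the routine inverse-closure checks described above.
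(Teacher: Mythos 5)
Your proof is correct and follows the same route the paper has in mind (the paper presents the corollary without an explicit proof, calling it a ``direct consequence'' of Lemma~\ref{lem-rincompose}). You rightly notice that the ``direct'' derivation only immediately bounds $\rin(\pi^{-1})$, since the sorting time is defined via a factorization of $\pi^{-1}$ into elements of $\cC$, and that bridging to $\rin(\pi)$ requires the inverse-invariance of $\rin$; your verification of that invariance (using that $\cT$ consists of involutions, that $\cRR$ is inverse-closed, and the commutation $\cRR\circ\cT=\cT\circ\cRR$ already recorded in Lemma~\ref{lem-rotrin}) is sound, as is the handling of the degenerate case $\rin(\cC;n)=0$.
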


To make use of this corollary, we need to show that there are permutations $\pi\in\cS_n$ of 
large~$\rin(\pi)$. 

\begin{lemma}\label{lem-maxrin}
We have $\rin(\cS;n)=\Theta(n^2)$.
\end{lemma}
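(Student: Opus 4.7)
The plan is to prove the two bounds separately, with the lower bound being the interesting direction.

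For the upper bound $\rin(\cS;n) = O(n^2)$, I would observe that $\cCbub \subseteq \cT$ and that the identity permutation lies in $\cRR$; the standard bubble-sort decomposition then witnesses $\pi \in \cT^{\circ\,\text{inv}(\pi)} \circ \cRR$ for every $\pi \in \cS_n$, giving $\rin(\pi) \le \text{inv}(\pi) \le \binom{n}{2}$.

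For the lower bound I would introduce a potential function
\[
\Phi(\pi) = \sum_{i=1}^{n-1} d_c(\pi(i), \pi(i+1)),
\]
where $d_c(a,b) = \min\{|a-b|,\, n-|a-b|\}$ is the cyclic distance on $[n]$, and verify two properties: (a) $\Phi(\rho) = n - 1$ for every $\rho \in \cRR$, since consecutive entries of any cyclic rotation or its reverse are cyclically adjacent and each pair contributes exactly $1$; and (b) $|\Phi(\tau\sigma) - \Phi(\sigma)| \le 4$ for every $\tau \in \cT$, because $\tau$ swaps two cyclically-adjacent values $a,b$ at some positions $p,q$, affecting at most four consecutive pairs of $\sigma$, each of which changes its $d_c$-contribution by at most $d_c(a,b) = 1$ by the triangle inequality. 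Iterating gives $\Phi(\pi) \le (n-1) + 4\rin(\pi)$ for every $\pi$.

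To complete the proof, I would exhibit a permutation with $\Phi$ of order $n^2$. For $n=2m$, the riffle interleave $\pi^{*} = (1,\, m{+}1,\, 2,\, m{+}2,\, \dotsc,\, m,\, 2m)$ has every pair of consecutive entries at cyclic distance $m$ or $m-1$, giving $\Phi(\pi^{*}) = m^2 + (m-1)^2 = \Omega(n^2)$, and therefore $\rin(\pi^{*}) = \Omega(n^2)$.

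The main obstacle is identifying the right potential. A naive attempt using the ordinary inversion count fails: the wrap-around transposition $1 \ominus \iota_{n-2} \ominus 1 \in \cT$ can change $\text{inv}$ by up to $2n-3$ in a single step, which would give only an $\Omega(n)$ lower bound. The cyclic-distance potential $\Phi$ is tailored to the cycle structure of the value set $[n]$: because every generator in $\cT$ — both the regular adjacent transpositions and the wrap-around one — swaps two cyclically-adjacent values, its effect on $\Phi$ is uniformly bounded by a constant.
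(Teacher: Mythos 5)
Your proof is correct and follows essentially the same approach as the paper: a cyclic-distance potential that is $\Theta(n)$ on $\cRR$ and changes by $O(1)$ under each $\cT$-step, applied to the same interleaving permutation. The only cosmetic difference is that the paper's $\tcd$ also includes the wrap-around term $\cd(\pi(1),\pi(n))$, which makes $\tcd$ exactly invariant under composition with $\cRR$ on either side; your variant drops that term and instead starts the iteration from the $\cRR$ factor, using $\Phi(\rho)=n-1$ as the base case, which works equally well.
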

\begin{proof}
A permutation $\pi$ of length $n$ has inversion number at most $\binom{n}{2}$, and hence 
$\rin(\cS;n)\le\binom{n}{2}$. For a lower bound, we first introduce some auxiliary notions.
For two numbers $i,j\in[n]$, we define their \emph{cyclic distance} $\cd(i,j)$ by
\[
 \cd(i,j)=\min\{|i-j|, n-|i-j|\},
\]
and for a permutation $\pi\in\cS_n$, define its \emph{total cyclic distance} $\tcd(\pi)$ by
\[
 \tcd(\pi)=\cd\big(\pi(1),\pi(n)\big)+\sum_{i=1}^{n-1}\cd\big(\pi(i),\pi(i+1)\big).
\]
Observe that for any $\rho\in\cRR$ and $\pi\in\cS_n$, both $\tcd(\pi\circ\rho)$ and 
$\tcd(\rho\circ\pi)$ are equal to $\tcd(\pi)$. Observe also that for any $\sigma\in\cT$, we have 
$\tcd(\sigma\circ\pi)\le \tcd(\pi)+4$. Consequently, given that the identity permutation $\iota_n$ 
satisfies $\tcd(\iota_n)=n$, we see that for any $\pi\in\cT^{\circ k}$ we have $\tcd(\pi)\le n+4k$. 
Combining these observations, we see that any $\pi\in\cS_n$ satisfies $\tcd(\pi)\le n+4\rin(\pi)$. 

To prove the desired lower bound $\rin(\cS,n)=\Omega(n^2)$, it is enough to find a
permutation $\pi\in\cS_n$ with $\tcd(\pi)=\Omega(n^2)$. An example can be the permutation
$\pi=(1,\lceil n/2\rceil+1,2,\lceil n/2\rceil+2,3,\lceil n/2\rceil+3,\dotsc)$ obtained by 
interleaving the two sequences $(1,2,3,\dotsc,\lceil 
n/2\rceil)$ and $(\lceil n/2\rceil+1, \lceil n/2\rceil+2,\dotsc,n)$. 
\end{proof}

Combining Corollary~\ref{cor-rincompose} and Lemma~\ref{lem-maxrin}, we obtain the following bound. 

\begin{corollary}\label{cor-lowrin}
For any class $\cC$,
\[
\wst{\cC}{n}\ge\Omega\left(\frac{n^2}{\rin(\cC;n)}\right).
\]
\end{corollary}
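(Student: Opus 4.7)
The plan is to combine Corollary~\ref{cor-rincompose} with Lemma~\ref{lem-maxrin} in a direct way. Lemma~\ref{lem-maxrin} guarantees that for every $n$ we can exhibit a witness permutation $\pi_n\in\cS_n$ with $\rin(\pi_n)=\Omega(n^2)$; in fact the proof of that lemma explicitly supplies the interleaving permutation, and its construction is what I would invoke here. Given such a witness, the corollary does all the remaining work.

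Concretely, I would argue as follows. Fix $n$, and let $\pi_n\in\cS_n$ be a permutation with $\rin(\pi_n)\ge c\,n^2$ for some absolute constant $c>0$, which exists by Lemma~\ref{lem-maxrin}. By Corollary~\ref{cor-rincompose}, applied to this specific $\pi_n$,
\[
\st{\cC}{\pi_n}\;\ge\;\frac{\rin(\pi_n)}{\rin(\cC;n)}\;\ge\;\frac{c\,n^2}{\rin(\cC;n)}.
\]
Since $\wst{\cC}{n}$ is by definition the maximum of $\st{\cC}{\pi}$ over all $\pi\in\cS_n$, we conclude
\[
\wst{\cC}{n}\;\ge\;\st{\cC}{\pi_n}\;=\;\Omega\!\left(\frac{n^2}{\rin(\cC;n)}\right),
\]
which is the claimed bound.

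There is no real obstacle, since the two technical ingredients have already been established. The only points worth noting are minor sanity checks: $\rin(\cdot)$ is always finite on $\cS_n$ because cyclic adjacent transpositions generate the symmetric group, so the ratio in Corollary~\ref{cor-rincompose} is well-defined whenever $\cC_n\neq\emptyset$; and the edge case in which $\st{\cC}{\pi_n}=+\infty$ (i.e. $\cC$ cannot sort $\pi_n$) only makes the inequality trivially true. Everything else is simply assembling the two prior results, so the argument is essentially a one-line deduction rather than a substantive proof.
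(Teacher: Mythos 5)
Your proposal is correct and is exactly the argument the paper has in mind: the paper itself introduces the corollary with the single sentence ``Combining Corollary~\ref{cor-rincompose} and Lemma~\ref{lem-maxrin}, we obtain the following bound,'' and your write-up simply spells out that combination. Nothing further to add.
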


For a gridding matrix $M$ with $m$ rows and $n$ columns, the \emph{reversal} of $M$, denoted $M^r$, 
is the gridding matrix obtained by replacing each individual entry $\cC=M_{i,j}$ with its 
reversal $\cC^r$, and then reversing the order of columns of the whole matrix $M$; formally, if 
$M_{i,j}=\cC$, then $M^r_{n-i+1,j}=\cC^r$. Note that the permutations in the class $\Grid(M^r)$ are 
precisely the reverses of those in $\Grid(M)$.

Let $M$ be a gridding matrix. The \emph{column rotation} of $M$ is the operation that transforms 
$M$ into the matrix $M'$ of the same size, where the first column of $M$ is equal to the last column 
of $M'$, and for each $i>1$ the $i$-th column of $M$ is equal to the $(i-1)$-th column of~$M'$. The 
\emph{row rotation} of $M$ is defined analogously. We say that a gridding matrix $M$ is 
\emph{congruent} to a matrix $M'$, if $M$ can be transformed into $M'$ by a sequence of column 
rotations, row rotations and reversals. Note that if $M$ and $M'$ are congruent, then for any 
$\pi\in\Grid(M)$ there are permutations $\rho,\rho'\in\cRR$ such that $\rho\circ\pi\circ\rho'$ 
belongs to $\Grid(M')$. In particular, $\rin(\Grid(M);n)=\rin(\Grid(M');n)$.

\begin{lemma}\label{lem-pegrin}
For any peg class $\cC$, either $\rin(\cC;n)$ is bounded, or $\wst{\cC}{n}=O(n)$. 
\end{lemma}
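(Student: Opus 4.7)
The plan is a structural case analysis on the gridding matrix $M$ of the peg class $\cC$, using the symmetries of Observation~\ref{obs-sym} and Observation~\ref{obs-r} to reduce to canonical forms. I first observe that all four symmetries (inverse, reverse, complement, flip) preserve peg-class structure and preserve $\rin$-boundedness: reverse and complement correspond to composition with $\delta_n\in\cRR$ on the right and left respectively, so Lemma~\ref{lem-rotrin} applies, and inverse invariance follows from the identity $\cT\circ\cRR=\cRR\circ\cT$ (used in the proof of Lemma~\ref{lem-rotrin}) together with closure of $\cT$ and $\cRR$ under inverses. These same symmetries preserve $\wst$-asymptotics up to constant factors.

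\emph{Case 1.} If $M$ contains both an $\Inc$ and a $\Dec$ cell, then up to symmetry I can arrange for a $\Dec$ cell strictly below-left of some $\Inc$ cell, yielding $\cCpan\subseteq\cC$. Observation~\ref{obs-comp} then gives $\wst{\cC}{n}\le\wst{\cCpan}{n}=O(n)$. The sub-case where $M$ has only $\Dec$ cells is handled by applying complement, reducing to Case~2 with constant overhead.

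\emph{Case 2.} If $M$ has only $\Inc$- and $\one$-cells, the unboundedness of $\rin(\cC;n)$ precludes $\cC$ from being a subclass of $\cRR\cup\cS_{\le k}$ or of $\RFringe{k}$ for any $k$, since both of these families have bounded $\rin$ (as verified directly via the $\tcd$--$\rin$ inequality $\tcd(\pi)\le n+4\rin(\pi)$ from the proof of Lemma~\ref{lem-maxrin}). A careful case analysis of the configurations of $\Inc$- and $\one$-cells in $M$ then shows that $\cC$ contains a subclass admitting an $O(n)$-sorting algorithm, such as $\cCins$ (sortable by insertion sort) or the three-block skew-sum class $\Grid\begin{pmatrix}\Inc&&\\&\Inc&\\&&\Inc\end{pmatrix}$ (sortable by a block-transposition-style argument); Observation~\ref{obs-comp} then concludes.

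The main obstacle lies in the structural analysis of Case~2: one must enumerate all configurations of $\Inc$- and $\one$-cells in $M$ with unbounded $\rin$ and verify that each such configuration yields a known $O(n)$-sortable peg class as a subclass of $\cC$, ruling out via the bounded-$\rin$ computation above those $M$ that land in $\RFringe{k}$ or $\cRR\cup\cS_{\le k}$. A secondary challenge is establishing the $O(n)$-sortability of the three-block skew-sum class itself, via an explicit sorting algorithm analogous to sorting by block transpositions in the genome-rearrangement literature.
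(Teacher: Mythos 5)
Your overall plan — case on whether $\Dec$ appears, use symmetries to normalize, then locate an $O(n)$-sortable subclass — matches the paper's strategy, and your Case~1 is essentially correct. However, there is a genuine gap in Case~2, which you yourself flag as ``the main obstacle'': you assert that ``a careful case analysis\dots shows that $\cC$ contains a subclass admitting an $O(n)$-sorting algorithm,'' but that case analysis is the whole content of the lemma in this regime, and it is not supplied. Observing that $\cC$ is not a subclass of $\cRR\cup\cS_{\le k}$ or $\RFringe k$ only tells you $\cC$ can sort; it does not, by itself, rule out that $\cC$ is some peg class with, say, quadratic sorting time. You still need to prove the converse direction: \emph{if no $\Inc$-cell has a nonempty cell strictly above-left or strictly below-right, then $\rin(\cC;n)$ is in fact bounded}, contradicting the hypothesis. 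This is the key step the paper carries out, and your plan does not contain it.

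Two further discrepancies with the paper's argument. First, the paper does not work with the dihedral symmetries (inverse, reverse, complement, flip); it introduces a coarser equivalence called \emph{congruence}, which allows cyclic rotations of rows and columns in addition to reversals. Congruence preserves $\rin$ via Lemma~\ref{lem-rotrin} (since it amounts to left/right composition with elements of $\cRR$), and crucially it lets you put any chosen $\Inc$-cell in the top-right corner of the matrix — something the eight dihedral symmetries cannot do. This normalization is what makes the paper's final block-diagonal argument go through cleanly; with only dihedral symmetries the enumeration of configurations you'd face is messier. Second, the target subclasses you name are not quite the right ones: the paper's Case~2 lands on $\cCins$ \emph{or} one of the two congruent peg classes $\cC_1=\one\ominus\Inc\ominus\Inc$ and $\cC_2=\Inc\ominus\one\ominus\Inc$ (Figure~\ref{fig-cong}) or a symmetry thereof, and proves $\cCins\subseteq\cC_1^{\circ 2}$ and $\cCins\subseteq\cC_2^{\circ 2}$ to transfer the $O(n)$ bound. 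Your proposed $\Grid\left(\begin{smallmatrix}\Inc&&\\&\Inc&\\&&\Inc\end{smallmatrix}\right)$ is a superclass of $\cC_1$, so once $\cC_1$ is handled its $O(n)$ bound follows for free and no separate block-transposition argument is needed; but the intermediate classes $\cC_1,\cC_2$ (rather than the full three-block skew sum) are what actually arise from the congruence normalization, and they must be in the list.
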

\begin{proof}
Let $\cC$ be a peg class with a gridding matrix~$M$, and suppose that $\rin(\cC;n)$ is unbounded. 
In particular, $M$ contains at least one entry equal to $\Inc$ or to~$\Dec$. Let $M'$ be a gridding 
matrix congruent to $M$ whose top-right entry is equal to~$\Inc$. 

Suppose that $M'$ contains an entry equal to $\Dec$. Then $\Grid(M')$ contains, as a subclass, the 
peg class $\cCpan=\Grid \left(\begin{smallmatrix} &\Inc\\\Dec&\end{smallmatrix}\right)$, and $\cC$ 
contains one of the symmetries of this class. Since all the symmetries of $\cCpan$ have linear 
worst-case sorting time, we conclude that $\wst{\cC}{n}=O(n)$.

\begin{figure}
\begin{align*}
\cCins&=\Grid\begin{pmatrix}
        &&\Inc\\\one&&\\&\Inc&
       \end{pmatrix}, &
\cC_1&=\Grid\begin{pmatrix}
        \one&&\\&\Inc&\\&&\Inc
       \end{pmatrix}, &
\cC_2&=\Grid\begin{pmatrix}
        \Inc&&\\&\one&\\&&\Inc
       \end{pmatrix}
\end{align*}
\caption{Three classes with congruent gridding matrices, all with linear sorting 
time}\label{fig-cong}
\end{figure}

Suppose then that $M'$ does not contain~$\Dec$. Let $M'_{i,j}$ be an entry of $M'$ equal 
to~$\Inc$. If $M'$ contains a nonempty entry strictly to the left and above $M'_{i,j}$, then 
$\Grid(M')$ contains $\cCins$ as a subclass. It follows that $\cC$ contains as a subclass one of 
the three classes from Figure~\ref{fig-cong} or one its symmetries. Noting that $\cCins\subseteq 
\cC_1^{\circ 2}$ and $\cCins\subseteq\cC_2^{\circ 2}$, and applying Observation~\ref{obs-r} if 
needed, we easily conclude that $\wst{\cC}{n}=O(n)$.

An analogous argument applies when $M'$ has a nonzero entry to the right and below $M'_{i,j}$.

Finally, suppose that no entry of $M'$ equal to $\Inc$ has any nonzero entry to the left and 
above, or to the right and below of itself. It follows that all the entries of $M'$ equal to $\Inc$ 
appear on the main diagonal, and $M'$ is a block-diagonal matrix whose every block consists either 
of a single entry $\Inc$, or a submatrix that has no entry equal to~$\Inc$. It then easily follows 
that $\rin(\Grid(M');n)$ is bounded, and $\rin(\cC;n)$ is bounded as well. 
\end{proof}

\begin{proposition}\label{pro-kvadr}
For a class $\cC$, the following are equivalent:
\begin{enumerate}
\item $\wst{\cC}{n}=o(n^2)$, 
\item $\wst{\cC}{n}=O(n)$, and
\item $\rin(\cC;n)$ is unbounded as $n\to\infty$.
\end{enumerate}
\end{proposition}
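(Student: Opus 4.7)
My plan is to prove the cyclic chain (2) $\Rightarrow$ (1) $\Rightarrow$ (3) $\Rightarrow$ (2). The first implication is immediate since $O(n) \subseteq o(n^2)$. The second is the contrapositive of Corollary~\ref{cor-lowrin}: if $\rin(\cC;n)$ were bounded above by a constant $R$, that corollary would give $\wst{\cC}{n} = \Omega(n^2/R) = \Omega(n^2)$, directly contradicting~(1).

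The crux is the implication (3) $\Rightarrow$ (2), which I would attack in two phases. First, I would show that hypothesis~(3) forces $\cC$ to be able to sort. By Proposition~\ref{pro-cannot}, if $\cC$ cannot sort then $\cC \subseteq \cRR \cup \cS_{\le k}$ or $\cC \subseteq \RFringe{k}$ for some $k$. In the former case every $\sigma \in \cRR$ satisfies $\rin(\sigma) = 0$ by definition, while any $\sigma \in \cS_{\le k}$ has $\rin(\sigma)$ bounded by its full inversion number, which is at most $\binom{k}{2}$. In the latter case, a permutation in $\Fringe{k}$ has the shape $\alpha \oplus \iota_a \oplus \beta$ with $|\alpha|, |\beta| \le k$, whose total inversion number is at most $k(k-1)$, and the $\Fringe{k}^r$ side reduces to the $\Fringe{k}$ side via Lemma~\ref{lem-rotrin}, since reversal amounts to composition with $\delta_n \in \cRR$. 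In each case $\rin(\cC;n)$ is bounded, contradicting~(3).

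Having established that $\cC$ can sort, I would invoke the structural dichotomy already used in the preceding proposition. Either $\cC$ contains a monotone juxtaposition or a symmetry of $\cF$ as a subclass, in which case $\wst{\cC}{n} = O(n)$ as noted in Section~\ref{sec-kva}, or by Fact~\ref{fac-peg} the class $\cC$ decomposes as a finite union of peg classes $\cC_1, \dotsc, \cC_t$. In the latter case, every permutation of $\cC_n$ lies in some $\cC_i$, so $\rin(\cC;n) = \max_i \rin(\cC_i;n)$, and the unboundedness hypothesis passes to at least one peg class, say $\cC_i$. Lemma~\ref{lem-pegrin} then yields $\wst{\cC_i}{n} = O(n)$, and since $\cC_i \subseteq \cC$ gives $\wst{\cC}{n} \le \wst{\cC_i}{n}$, we conclude $\wst{\cC}{n} = O(n)$.

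The bulk of the real technical content has already been carried out in Lemma~\ref{lem-pegrin} and its predecessors, so I expect no genuine mathematical obstacle here. The only delicate step is verifying that the two non-sortable families of Proposition~\ref{pro-cannot} both have bounded reduced inversion number; this is the main locus of careful bookkeeping in the argument, even though no new idea is needed.
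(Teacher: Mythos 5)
Your proof is correct and follows essentially the same route as the paper: the chain $(2)\Rightarrow(1)\Rightarrow(3)\Rightarrow(2)$ with $(1)\Rightarrow(3)$ from Corollary~\ref{cor-lowrin} and $(3)\Rightarrow(2)$ via the Fact~\ref{fac-peg} dichotomy and Lemma~\ref{lem-pegrin}. The one deviation is your preliminary phase showing $\cC$ can sort: it is correct (and a nice observation that both families in Proposition~\ref{pro-cannot} have bounded $\rin$), but logically superfluous, since Fact~\ref{fac-peg} and the rest of the argument apply to arbitrary classes without any sortability hypothesis.
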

\begin{proof}
By Corollary~\ref{cor-lowrin}, if $\rin(\cC;n)$ is bounded then $\wst{\cC}{n}=\Omega(n^2)$. This 
shows the implication $1\implies 3$. 

To prove $3\implies 2$, suppose now that $\rin(\cC;n)$ is unbounded for a class~$\cC$.  If $\cC$ 
contains as a subclass a monotone juxtaposition or its inverse or if it contains the Fibonacci class 
$\cF$ or its reverse, then clearly $\wst{\cC}{n}=O(n)$. On the other hand, if $\cC$ contains no 
such 
subclass, then it can be expressed as a union of finitely many peg classes $\cC_1,\dotsc,\cC_k$. 
Since $\rin(\cC;n)$ is unbounded, there must be an $i\in[k]$ such that $\rin(\cC_i;n)$ is unbounded. 
By Lemma~\ref{lem-pegrin}, $\wst{\cC_i}{n}=O(n)$, and therefore $\wst{\cC}{n}=O(n)$. This proves the 
implication $3\implies 2$.  

The implication $2\implies 1$ is trivial.
\end{proof}

\section{From \texorpdfstring{$\Omega(\sqrt n)$}{Ω(√n)} to polylog}\label{sec-polylog}

\newcommand{\bbX}{\mathfrak{X}}

The most difficult part of our argument deals with the gap between sorting times
$O(\log^2 n)$ and $\Omega(\sqrt n)$. The criterion distinguishing the two types of classes is rather 
simple to state, though. For ease of notation, let $\bbX$ be the set of permutation classes that 
contains the eight monotone juxtapositions and the four classes $\cL$, $\cL^r$, $\PBT$ and $\PBT^r$.
We say that a permutation class $\cC$ is \emph{$\bbX$-avoiding} if it does not contain any class 
from $\bbX$ as a subclass, otherwise we say that it is $\bbX$-containing.

\begin{theorem}\label{thm-lin}
Any $\bbX$-avoiding class $\cC$ satisfies $\wst{\cC}{n}=\Omega(\sqrt n)$, while any 
$\bbX$-containing 
class $\cC$ satisfies $\wst{\cC}{n}=O(\log^2 n)$.
\end{theorem}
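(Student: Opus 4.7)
The theorem decomposes into two independent statements. For the upper bound, by Observation~\ref{obs-comp} it suffices to establish $\wst{\cD}{n}=O(\log^2 n)$ for each $\cD\in\bbX$, since any class containing such a $\cD$ as a subclass inherits the same bound up to a constant factor. Using the reversal, inversion, and complementation symmetries (Observations~\ref{obs-sym} and~\ref{obs-r}), the twelve classes in $\bbX$ collapse to three representatives: a single monotone juxtaposition, the layered class $\cL$, and the parallel block transposition class $\PBT$.

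For a monotone juxtaposition such as $\Grid\begin{pmatrix}\Inc & \Inc\end{pmatrix}$, a single step can realise an arbitrary merge of two sorted blocks, so a standard merge-sort argument yields even the stronger bound $O(\log n)$. For $\cL$, each step may simultaneously reverse any disjoint collection of intervals; combining a divide-and-conquer recursion with a bitonic-merge-style subroutine gives $O(\log^2 n)$ sorting steps. For $\PBT$, a single step performs an independent cyclic rotation inside each part of an interval partition of $[n]$, which is powerful enough to simulate one level of a butterfly-type permutation routing network, so the same $O(\log^2 n)$ bound applies.

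The main work lies in the lower bound. Let $\cC$ be an $\bbX$-avoiding class, fix $\pi\in\cS_n$, and suppose $\pi^{-1}=\sigma_k\circ\cdots\circ\sigma_1$ with each $\sigma_i\in\cC_n$. My plan follows the blueprint sketched in the introduction. The first and hardest step is a structural classification of $\bbX$-avoiding classes: because $\cC$ contains no monotone juxtaposition and no symmetry of $\cL$ or $\PBT$, every $\sigma\in\cC$ must admit a gridded decomposition of very restricted shape---intuitively, $\sigma$ is obtained from a constant-complexity ``skeleton'' (whose combinatorial type depends only on $\cC$) by attaching monotone pieces arranged along a path-like structure. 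I expect the technical heart of the proof to lie here, since the characterisation must be precise enough to yield topological control over each single composition step.

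With the structural result in hand, I would encode the sorting sequence as an execution graph $G_\pi$ with $O(kn)$ vertices---one per element per time step---whose edges track the element trajectories across consecutive compositions. The restricted form of each $\sigma_i$ guarantees that the edges introduced in step $i$ can be routed with a bounded amount of genus and a bounded number of crossings (with the bound depending only on $\cC$), so $G_\pi$ altogether admits a drawing on a surface of genus $O(k)$ with $O(k)$ crossings. I would then invoke the known fact that a graph drawn on a surface of genus $g$ with $c$ crossings has treewidth $O(\sqrt{(g+c)|V|})$, obtaining $\tw(G_\pi)=O(k\sqrt{n})$. On the other hand, choosing $\pi$ adversarially---for instance an expander-based permutation---forces $G_\pi$ to have treewidth $\Omega(n)$, since the compositions must realise arbitrary routings between initial and final positions. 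Combining the two bounds yields $k\sqrt{n}\ge \Omega(n)$, hence $k=\Omega(\sqrt{n})$, as required.
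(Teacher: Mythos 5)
Your architecture for the lower bound is the same as the paper's: prove a structural characterization of $\bbX$-avoiding classes, build a ``sorting/execution'' graph on $O(kn)$ vertices encoding the $k$ composition steps, draw it on a surface of controlled genus and crossing number, invoke a genus-plus-crossings treewidth bound, and contrast with a permutation of treewidth $\Omega(n)$. The upper bound is also in the same spirit, reducing the twelve classes of $\bbX$ to a small number of representatives via symmetries and exhibiting explicit $O(\log n)$ or $O(\log^2 n)$ sorters.

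However, there is a genuine quantitative gap in your lower-bound argument, and it is the central step. You claim that the execution graph can be drawn on a surface of genus $O(k)$ with $O(k)$ crossings, and you invoke a treewidth bound of the form $O\bigl(\sqrt{(g+c)|V|}\bigr)$. Both of these are wrong, and they happen to cancel. First, $O(k)$ crossings is not achievable: a single step $\sigma_i\in\cC_n$ may have $\Omega(n)$ inversions even in an $\bbX$-avoiding class (already for the Fibonacci class $\cF$, whose permutations can have $\sim n/2$ disjoint descents), and each inversion in the intermediate layer corresponds to an essentially unavoidable crossing between the incoming vertical edges and the outgoing horizontal edges. The right bound, and the one the paper proves (Proposition~\ref{pro-embed}), is genus $O(k)$ with $O(kn)$ crossings; to get even that linear-in-$n$ crossing count one needs the full strength of the structural Proposition~\ref{pro-avoid} (bounded intervalicity, avoidance of $\iota_K\ominus\iota_K$ and $\delta_K\oplus\delta_K$, and the linear-inversions property per cell), plus a nontrivial local re-routing with handles and a cross-cap. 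Second, your treewidth bound $O\bigl(\sqrt{(g+c)|V|}\bigr)$ is not a known result and is also incorrect (it gives treewidth $0$ for planar graphs with no crossings). The correct statement, which the paper establishes as Proposition~\ref{pro-gctw}, is $\tw=O\bigl(\sqrt{(g+1)(|V|+c)}\bigr)$, obtained by replacing crossings with degree-$4$ dummy vertices and then roughly doubling each bag. With the correct crossing count $c=O(kn)$ and your formula you would get $\tw(\SD)=O\bigl(\sqrt{(k+kn)\cdot kn}\bigr)=O(kn)$, which only yields $k=\Omega(1)$. You must use the $(g+1)(|V|+c)$ form, which gives $O\bigl(\sqrt{k\cdot kn}\bigr)=O(k\sqrt{n})$ and hence the desired $k=\Omega(\sqrt n)$. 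Finally, you should be explicit that you need $\tw(\SD)\ge\tw(G(\pi))$ (the execution graph has the incidence graph of $\pi$ as a minor, Lemma~\ref{lem-twsd}), not a vaguer ``routing'' argument.

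On the upper bound, your plan for the monotone juxtapositions is fine; merge-sort-type or radix-sort-type arguments both give $O(\log n)$. For $\cL$ and $\PBT$ your sketches are underdeveloped and the butterfly-network analogy for $\PBT$ is dubious: a single $\PBT$ step applies independent cyclic rotations inside consecutive blocks, which is not the same as one butterfly level (swapping pairs of positions that differ in a fixed bit, which are not consecutive). The paper instead uses the observation $\PBT\subseteq\cL^{\circ 2}$ (so bounding $\PBT$ suffices), and an explicit divide-and-conquer scheme: in each $O(\log n)$-step phase, merge small and large runs pairwise by one block transposition per step, then recurse on the two halves, giving $O(\log^2 n)$ overall. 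You should give such a concrete recursion; the butterfly claim as stated would need its own proof and is likely false.
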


The rest of this section is devoted to a detailed proof of Theorem~\ref{thm-lin}. 
We first focus on showing that every $\bbX$-avoiding class $\cC$ satisfies 
$\wst{\cC}{n}=\Omega(\sqrt n)$. 

An \emph{inversion} in a permutation $\pi$ is a decreasing subsequence of length 2, i.e., a pair 
$(\pi(i),\pi(j))$ such that $i<j$ and $\pi(i)>\pi(j)$. The first ingredient in the proof of Theorem~\ref{thm-lin} is a structural result characterizing permutation classes admitting a linear bound on the number of inversions.

\begin{proposition}\label{pro-inversion}
For a permutation class $\cC$, the following statements are equivalent:
\begin{enumerate}
 \item Neither $\cR$ nor $\Dec$ is a subclass of $\cC$.
 \item There is a $k\in\bbN$ such that every permutation in $\cC$ avoids both $\delta_k$ and $\iota_k\ominus\iota_k$.
 \item There is a $Q\in\bbN$ such that every permutation $\pi\in\cC$ of size $n$ has at most $Qn$ inversions.
\end{enumerate}
\end{proposition}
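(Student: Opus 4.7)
The plan is to prove the three statements equivalent by establishing $(1) \Leftrightarrow (2)$, $(3) \Rightarrow (1)$, and $(2) \Rightarrow (3)$, with the substantive content in the last implication.

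For $(1) \Leftrightarrow (2)$ I will use a standard heredity argument: if $\cR \not\subseteq \cC$, then some $\iota_a \ominus \iota_b$ lies outside $\cC$, and since $\iota_k \ominus \iota_k$ contains $\iota_a \ominus \iota_b$ whenever $k \geq \max(a,b)$, heredity forces every $\pi \in \cC$ to avoid $\iota_k \ominus \iota_k$ for all such $k$; a symmetric argument for $\Dec$ produces some $k'$ with every $\pi \in \cC$ avoiding $\delta_{k'}$, and taking the maximum of these two values yields the common $k$ of (2). Conversely, $\cR \subseteq \cC$ places $\iota_k \ominus \iota_k$ in $\cC$ for every $k$, and likewise $\Dec \subseteq \cC$ places $\delta_k$ in $\cC$, so (2) fails. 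For $(3) \Rightarrow (1)$ I note that $\delta_n$ has $\binom{n}{2}$ inversions while $\iota_{\lceil n/2 \rceil} \ominus \iota_{\lfloor n/2 \rfloor}$ has $\lceil n/2 \rceil \lfloor n/2 \rfloor$ inversions, both of which grow quadratically in $n$; so (3) forbids either $\Dec$ or $\cR$ from being contained in $\cC$.

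For the substantive implication $(2) \Rightarrow (3)$, fix $\pi \in \cC$ of length $n$. Since $\pi$ avoids $\delta_k$, its longest decreasing subsequence has length at most $k-1$, so by Dilworth's theorem its positions partition into $k-1$ increasing subsequences $I_1, \ldots, I_{k-1}$. Every inversion of $\pi$ lies strictly between two distinct parts, so it suffices to prove the following key lemma: for any two disjoint increasing subsequences $A, B$ of $\pi$ such that $\pi$ restricted to $A \cup B$ avoids $\iota_k \ominus \iota_k$, the number of inversions between $A$ and $B$ is $O(k) \cdot (|A| + |B|)$. Summing this bound over the $\binom{k-1}{2}$ unordered pairs then gives at most $O(k^2) \cdot n$ inversions in $\pi$, establishing (3) with $Q = O(k^2)$.

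To prove the key lemma I plan to count separately the inversions $(a,b)$ with $a \in A$ sitting above and to the left of $b \in B$ (the opposite orientation is handled symmetrically), encoding them as the 1-entries of a $0/1$ matrix $M$ whose rows are indexed by $A$ and columns by $B$, both listed in their increasing order. The monotonicity of $A$ and $B$ forces each row of $M$ to be an interval $[f(i), h(i)]$ with both $f$ and $h$ non-decreasing in $i$. A $k \times k$ all-one submatrix of $M$ corresponds precisely to an $\iota_k \ominus \iota_k$ pattern whose top $k$ entries lie in $A$ and bottom $k$ in $B$, so the hypothesis forbids any such submatrix; a short monotonicity argument then gives $h(i) - f(i+k-1) \leq k - 2$ whenever $i+k-1 \leq |A|$, and a telescoping estimate bounds $\sum_i (h(i) - f(i) + 1)$ by $O(k) \cdot (|A| + |B|)$. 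The main obstacle is this matrix step: establishing and exploiting the dual monotone interval structure of $M$ to rule out large all-one blocks while keeping a linear upper bound on the number of 1-entries.
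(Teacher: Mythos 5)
Your proof is correct, and the easy implications $(1)\Leftrightarrow(2)$ and $(3)\Rightarrow(1)$ match the paper's (the paper proves the three implications in a single cycle $3\Rightarrow1\Rightarrow2\Rightarrow3$, so your extra direction $(2)\Rightarrow(1)$ is redundant but harmless). For the substantive implication $(2)\Rightarrow(3)$, you take a genuinely different route from the paper. Both proofs use $\delta_k$-avoidance and Dilworth to split $\pi$ into $k-1$ increasing subsequences, and both reduce to bounding inversions between a fixed pair $I_a, I_b$. The paper then defines a bipartite \emph{inversion graph} on $I_a\cup I_b$ and proves it is $(k-1)$-degenerate: in a putative subgraph with minimum degree $\ge k$, one takes the rightmost vertex $x_1$ of $I'_a$, the topmost vertex $y_1$ of $N(x_1)$, and observes $N(x_1)\cup N(y_1)$ realizes $\iota_k\ominus\iota_k$. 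You instead encode the (oriented) inversions between $I_a$ and $I_b$ as a $0/1$ matrix whose rows are intervals with both endpoints monotone nondecreasing, note that a $k\times k$ all-ones submatrix yields an $\iota_k\ominus\iota_k$ pattern, derive the diagonal bound $h(i)-f(i+k-1)\le k-2$, and telescope; your reasoning here checks out (the monotonicity of $f$ and $h$ does hold because $I_a$ is increasing in both position and value, so raising $i$ can only shift both the leftmost admissible column and the rightmost one to the right). Both approaches land on a final bound of $O(k^2)n$ inversions. The paper's degeneracy argument is shorter and more structural; your matrix/interval argument is more computational but is self-contained and makes very explicit how the forbidden pattern constrains the inversion count. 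Either is a valid proof.
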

\begin{proof}
 The implication $3\Rightarrow 1$ is clear, since both $\Dec$ and $\cR$ contain permutations of size $n$ with $\Omega(n^2)$ inversions, namely $\delta_n$ and $\iota_{\lfloor n/2\rfloor}\ominus\iota_{\lceil n/2\rceil}$. Likewise, the implication $1\Rightarrow 2$ follows directly from the fact that any permutation in $\cR$ is a subpermutation of $\iota_k\ominus\iota_k$ for some~$k$.
 
It remains to prove $2\Rightarrow 3$. Fix $k$ such that every permutation of $\cC$ avoids both $\delta_k$ and $\iota_k\ominus\iota_k$, and let $\pi$ be a permutation of size $n$ in~$\cC$. Since $\pi$ avoids $\delta_k$, it can be partitioned into $k-1$ increasing subsequences $I_1,\dotsc,
I_{k-1}$. We will show that $\pi$ has at most $(k-1)^3n$ inversions.

Suppose that $(\pi(i),\pi(j))$ is an inversion, that is, $i<j$ and 
$\pi(i)>\pi(j)$. We say that the inversion has \emph{type $(a,b)$} if $\pi(i)$ belongs to $I_a$ and 
$\pi(j)$ belongs to $I_b$. Note that this implies $a \neq b$. We will show that for any 
$(a,b)\in[k-1]\times[k-1]$ there are at most $(k-1)n$ inversions of type $(a,b)$. To this end, we 
define a graph $G_{a,b}$ on the vertex set $I_a\cup I_b$ whose edges are the pairs of vertices 
that form an inversion of type $(a,b)$. We claim that the graph $G_{a,b}$ is 
$(k-1)$-degenerate, that is, each of its nonempty subgraphs has a vertex of degree at most~$k-1$. 

Suppose for contradiction that $G_{a,b}$ has a nonempty subgraph $G'$ whose every vertex has degree 
at least~$k$. The vertex set of $G'$ has the form $I'_a\cup I'_b$ for some $I'_a\subseteq I_a$ 
and $I'_b\subseteq I_b$. Let $x_1$ be the rightmost vertex in $I'_a$, and let $N(x_1)$ be the 
vertices of $I'_b$ adjacent to~$x_1$. By assumption, $|N(x_1)|\ge k$. Let $y_1$ be the 
rightmost (and therefore also topmost) vertex in $N(x_1)$, and let $N(y_1)\subseteq I'_a$ be its 
neighbors. Since any vertex in $N(x_1)$ is to the right of $x_1$, it follows that any vertex in 
$N(x_1)$ is to the right of all the vertices in $N(y_1)$ (recall that $x_1$ is the rightmost vertex 
of $I'_a$). Moreover, since $y_1$ is topmost in $N(x_1)$, all the vertices of $N(x_1)$ are below 
all the vertices in~$N(y_1)$. It follows that the set $N(x_1)\cup N(y_1)$ induces in $\pi$ a 
permutation containing $\iota_k\ominus \iota_k$, which is a contradiction.

We conclude that the graph $G_{a,b}$ is $(k-1)$-degenerate for any $(a,b)\in[k-1]\times[k-1]$, and 
therefore has at most $(k-1)(|I_a|+|I_b|)\le (k-1)n$ edges. Summing over all values of $a$ and 
$b$, we conclude that $\pi$ has at most $(k-1)^3n$ inversions.
\end{proof}

The key part in the proof of Theorem~\ref{thm-lin} is a structural characterization of $\bbX$-avoiding classes. To state it, we need to introduce some terminology.

An \emph{integer interval} (or just \emph{interval}, if there is no risk of confusion) is a finite 
set of consecutive positive integers, i.e., a set of the form $\{i\in\bbN; a\le i\le b\}$ for some 
$a,b\in\bbN$. For a finite set $X\subseteq\bbN$, its \emph{intervalicity}, denoted $\Int(X)$, is 
the smallest $k\in\bbN_0$ such that $X$ can be expressed as a union of $k$ integer intervals. For a 
finite point set $P\subseteq\bbN\times\bbN$, we let $P|_x$ and $P|_y$ denote its projections to the 
$x$-axis and to the $y$-axis, respectively. The \emph{intervalicity} of the point set $P$, denoted 
again $\Int(P)$, is the maximum of $\Int(P|_x)$ and $\Int(P|_y)$.

\begin{proposition}\label{pro-avoid}
For every $\bbX$-avoiding class $\cC$ there is a constant $K\equiv K(\cC)$, such that every 
$\pi\in\cC$ admits a gridding with at most $K$ rows and $K$ columns such that for every cell $X$ of 
the gridding, the following holds:
\begin{enumerate}
 \item the point set $\pi[X]$ has intervalicity at most $K$,
 \item the permutation induced by $\pi[X]$ avoids at least one of $\iota_K$ and $\delta_K$,
 \item the permutation induced by $\pi[X]$ avoids both $\iota_K\ominus\iota_K$ and 
$\delta_K\oplus\delta_K$, 
and
 \item if $n_X$ is the number of elements in $\pi[X]$, then either $\pi[X]$ or its reverse has at most $K\cdot n_X$ inversions.
\end{enumerate}
\end{proposition}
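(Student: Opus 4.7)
The plan is to construct, for each $\pi \in \cC$, a gridding of bounded size satisfying conditions (1)--(4). First, observe that condition (4) follows from (2) and (3) via Proposition~\ref{pro-inversion}: if $\pi[X]$ avoids $\delta_K$ (the first alternative in (2)) and $\iota_K \ominus \iota_K$ (part of (3)), we get $O(n_X)$ inversions directly. If instead $\pi[X]$ avoids $\iota_K$, then $\pi[X]^r$ avoids $\delta_K$, and since $\delta_K \oplus \delta_K$ is precisely the reverse of $\iota_K \ominus \iota_K$, condition (3) forces $\pi[X]^r$ to avoid $\iota_K \ominus \iota_K$, so Proposition~\ref{pro-inversion} again gives the inversion bound, now for the reverse. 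So the substantive work lies in establishing (1), (2), and (3).

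For the gridding itself, I would proceed by an iterative refinement starting from a trivial single-cell gridding of $\pi$. Monotone juxtaposition avoidance provides a constant $k_1$ that bounds how extended sequences of alternating monotone regions can be, and this bound drives the initial subdivision into rows and columns so that each cell satisfies condition (2), i.e., its induced permutation avoids $\iota_K$ or $\delta_K$. At the global level, avoidance of $\cL = \bigoplus \Dec$ prevents too many decreasing-type cells from stacking in a direct-sum pattern, and $\cL^r$-avoidance does the analogous thing on the skew side. Avoidance of $\PBT = \bigoplus \cR$ controls how rotation-shaped block structures can be summed, and $\PBT^r$-avoidance is dual. Together these four global constraints bound the number of rows and columns of the gridding.

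Condition (1) on bounded intervalicity is achieved by placing the gridding lines at natural breakpoints of $\pi$, chosen so that each cell's projections onto the two axes consist of a bounded number of intervals. Condition (3) is enforced by observing that a long $\iota_K \ominus \iota_K$ pattern inside a single cell, combined with nontrivial content in a surrounding cell, would produce a pattern from $\PBT$ or $\cL^r$ depending on the configuration, contradicting $\bbX$-avoidance; a symmetric argument rules out long $\delta_K \oplus \delta_K$ patterns.

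The main obstacle I anticipate is producing a single unified construction that achieves (1)--(3) simultaneously with a uniform constant $K = K(\cC)$. The complication is that $\cC$ may contain $\cF$ or $\cF^r$ as subclasses, so Fact~\ref{fac-peg} does not apply and we cannot reduce to a finite union of peg classes for free. Hence one must carry out a bespoke structural analysis that interleaves the constraints from all twelve classes in $\bbX$, likely via a Ramsey-style extraction of monotone substructures combined with careful bookkeeping of how local patterns inside a cell combine with global patterns across cells to witness forbidden members of $\bbX$.
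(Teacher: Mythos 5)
Your reduction of condition (4) to conditions (2) and (3) via Proposition~\ref{pro-inversion} is correct and matches the paper exactly, including the observation that $(\iota_K\ominus\iota_K)^r=\delta_K\oplus\delta_K$. But the rest of the proposal has genuine gaps and some misattributions of which forbidden class controls which condition, and the ``Ramsey-style extraction'' you fall back on in the last paragraph is too vague to close them.

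Concretely, the paper's proof rests on three external structural facts that you do not identify, and without them your ``iterative refinement'' cannot get off the ground. \emph{(i)} Avoiding $\cL$ and $\cL^r$ gives (via Vatter, Fact~\ref{fac-monot}) a partition of $\pi$ into a bounded number $p+q$ of monotone subsequences; by separating the $p$ increasing from the $q$ decreasing parts with a $(pq+1)\times(pq+1)$ gridding, one obtains condition~(2) directly. You instead attribute condition~(2) to monotone-juxtaposition avoidance, which is not what controls it. \emph{(ii)} Avoiding $\PBT$ and $\cL^r$ together implies that neither $\bigoplus^A(\iota_B\ominus\iota_B)$ nor $\bigominus^A(\iota_B\ominus\iota_B)$ lies in $\cC$ for some $A,B$, and Vatter's theorem on sums (Fact~\ref{fac-sum}) then yields a bounded refinement of the gridding in which every cell avoids $\iota_B\ominus\iota_B$; the symmetric argument (using $\PBT^r$ and $\cL$) handles $\delta_{\overline B}\oplus\delta_{\overline B}$. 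This is how condition~(3) is achieved --- it is not a matter of a long in-cell pattern ``combined with nontrivial content in a surrounding cell'' producing a forbidden pattern, but a uniform gridding-refinement theorem. \emph{(iii)} Most importantly, your plan to secure condition~(1) by ``placing the gridding lines at natural breakpoints'' misses the point. The paper uses the Huczynska--Vatter alternation bound (Fact~\ref{fac-alt}), which holds because $\cC$ avoids all monotone juxtapositions, to show that \emph{every} cell of \emph{every} gridding of $\pi$ automatically has intervalicity less than a fixed constant. This is what lets the gridding be built freely to satisfy (2) and (3), with (1) coming for free; if bounded intervalicity actually required careful placement of lines, it would conflict with the refinements needed for (2) and (3), and it is unclear how you would reconcile all four constraints simultaneously. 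As written, your proposal does not supply an argument that a single bounded gridding satisfying all of (1)--(3) exists, and the final paragraph acknowledges this.
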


Before we prove the proposition, let us collect several known facts about permutation classes.

\begin{fact}[\protect{Vatter~\cite[Corollary 5]{VatterEH}}]\label{fac-monot}
For every permutation class $\cC$ that does not contain $\cL$ or $\cL^r$ as a subclass, there is a 
constant $K=K_{\ref{fac-monot}}$ such that every permutation $\pi\in\cC$ is a union of at most 
$K$ monotone subsequences.
\end{fact}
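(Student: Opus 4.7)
Since the statement is cited from Vatter's work, my plan is to reconstruct the argument by proving the contrapositive: if the minimum number of monotone subsequences needed to partition $\pi$ is unbounded over $\pi \in \cC$, then $\cC$ must contain $\cL$ or $\cL^r$ as a subclass. Because $\cC$ is hereditary, it suffices to exhibit inside $\cC$ arbitrarily long layered permutations (which generate $\cL$) or arbitrarily long skew-layered permutations (which generate~$\cL^r$).

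The first step is to dispose of the trivial cases. If $\cC$ avoids some $\iota_m$, then every permutation in $\cC$ has longest increasing subsequence shorter than $m$, and by Mirsky's theorem it decomposes into fewer than $m$ decreasing subsequences; we may take $K=m-1$. Symmetrically, if $\cC$ avoids some $\delta_m$, Dilworth's theorem provides the bound via a partition into increasing subsequences. So I may henceforth assume $\cC$ contains both $\Inc$ and $\Dec$ as subclasses. Pick a sequence $(\pi_n)$ in $\cC$ whose minimum monotone partition size tends to infinity; the goal is to locate a long layered or skew-layered subpermutation inside some~$\pi_n$.

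The heart of the argument is an extraction lemma: for every $t$ there is an $N(t)$ such that any permutation requiring at least $N(t)$ monotone parts in every partition contains $\bigoplus^t \delta_t$ or $\bigominus^t \iota_t$ as a subpermutation. Granting this, applying it to $\pi_{N(t)} \in \cC$ for each $t$ and invoking the pigeonhole principle on the dichotomy produces an infinite family of either layered or skew-layered patterns inside $\cC$, yielding $\cL \subseteq \cC$ or $\cL^r \subseteq \cC$, which is exactly what we need.

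The main obstacle is the extraction lemma itself. My strategy is iterative Ramsey-style extraction. Fix a longest decreasing subsequence $D$ in $\pi$; by Dilworth its length is at least the monotone partition number of $\pi$. The points of $\pi$ outside $D$ are distributed among the grid cells cut out by $D$. An averaging argument forces at least one cell to itself induce a subpermutation requiring many monotone subsequences, since otherwise the partitions of all cells could be combined with $D$ to yield a small monotone partition of~$\pi$. Recursing into that cell and tracking its geometric position relative to $D$---whether it sits strictly northeast or strictly southwest of the chosen elements of $D$---one builds up either a nested decreasing pattern stacked in increasing positional order (a layered permutation) or a nested increasing pattern stacked in decreasing order (a skew-layered permutation). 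The careful bookkeeping required to guarantee that the recursion produces a clean layered or skew-layered witness, and to bound $N(t)$ explicitly, is the delicate technical part, but the overall structure follows standard iterated Erd\H{o}s--Szekeres extraction.
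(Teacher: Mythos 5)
Your overall reduction is fine: the contrapositive, the trivial Dilworth/Mirsky cases, and the final pigeonhole step (getting $\cL\subseteq\cC$ or $\cL^r\subseteq\cC$ from containing $\bigoplus^t\delta_t$ or $\bigominus^t\iota_t$ for arbitrarily large $t$) are all correct, and they correctly identify that everything hinges on your ``extraction lemma''. But the proof you sketch for that lemma has a genuine gap: the averaging step is false. You claim that if every cell of the gridding cut out by a longest decreasing subsequence $D$ had small monotone partition number, then these partitions ``could be combined with $D$ to yield a small monotone partition of $\pi$''. The number of cells is of order $|D|^2$, which is unbounded, and monotone parts from different cells cannot in general be merged, so no contradiction arises. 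A concrete counterexample to the claim is $\pi=\bigominus^s\iota_s$: a longest decreasing subsequence picks exactly one point from each increasing block, and then \emph{every} cell of the resulting gridding induces an increasing permutation (partition number $1$), while $\pi$ itself needs $s$ monotone parts. So there need not be any ``heavy'' cell to recurse into, and the hard case of the lemma is precisely the one your recursion cannot see: the obstruction is spread across many light cells and the layered or skew-layered witness has to be assembled globally, not inside a single cell. (Your NE/SW bookkeeping has a secondary problem of the same flavour: a heavy cell near the diagonal of the $D$-grid is neither northeast nor southwest of a long block of $D$.)

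For context, the paper does not prove this statement at all; it is imported as a black box from Vatter's work (Fact~\ref{fac-monot}), and the known proofs of such results do not follow the single-cell recursion you propose -- they require Ramsey-type arguments tailored to exactly the ``spread-out'' configurations above. So your write-up is not a proof but a reduction of the cited fact to an unproved lemma whose proposed proof mechanism fails on the simplest nontrivial witnesses ($\bigoplus^s\delta_s$ and $\bigominus^s\iota_s$ with complexity distributed over many monotone cells). To make progress you would need an argument that, when no single cell is heavy, extracts the sum (or skew sum) structure across a long antichain of cells; that is the actual content of Vatter's theorem, and it is missing here.
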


\begin{fact}[\protect{Vatter~\cite[Theorem 3.1]{VatterSmall}}]\label{fac-sum}
Let $\cC$ be a permutation class, and let $\alpha$ be a permutation. The following two statements 
are equivalent:
\begin{itemize}
 \item There is a constant $K=K_{\ref{fac-sum}}$ such that every permutation in $\cC$ has a 
gridding of size $K\times K$ in which every cell induces a permutation that avoids~$\alpha$. 
\item There is a constant $Q$ such that neither $\bigoplus^Q\alpha$ nor $\bigominus^Q\alpha$ 
belongs to~$\cC$.
\end{itemize}
\end{fact}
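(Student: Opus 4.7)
The plan is to prove the equivalence by handling the two directions with rather different techniques: the forward implication is a direct pigeonhole argument, while the converse is the substantial content and will require a structural decomposition of permutations in $\cC$.

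For the implication from the gridding condition to the exclusion of large sums, suppose every $\pi \in \cC$ admits a $K \times K$ gridding whose cells all avoid $\alpha$, and consider $\bigoplus^Q \alpha$ under the assumption that it lies in $\cC$. Its diagram consists of $Q$ copies of $\alpha$ placed along the diagonal, each occupying a contiguous block of positions and of values. In any $K \times K$ gridding of $\bigoplus^Q \alpha$, the $K-1$ vertical grid lines can intersect at most $K-1$ of these blocks, and the $K-1$ horizontal grid lines intersect at most $K-1$ of them as well, since each block is contiguous in both coordinates. Hence if $Q \ge 2K-1$, at least one copy of $\alpha$ sits uncut inside a single cell, contradicting $\alpha$-avoidance of that cell. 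Thus taking $Q = 2K-1$ excludes $\bigoplus^Q \alpha$, and the symmetric anti-diagonal argument excludes $\bigominus^Q \alpha$.

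For the converse, assume $\bigoplus^Q \alpha, \bigominus^Q \alpha \notin \cC$, and construct a uniform $K$ witnessing the gridding property. I would proceed by induction on $|\alpha|$, with the base case $|\alpha|=1$ being vacuous. For the inductive step, fix $\pi \in \cC$ and consider its substitution decomposition $\pi = \sigma[\pi_1, \ldots, \pi_m]$ where $\sigma$ is simple. If $\sigma$ is a direct or skew sum, the exclusion $\bigoplus^Q \alpha, \bigominus^Q \alpha \notin \cC$ immediately bounds by $Q-1$ the number of blocks $\pi_i$ that contain $\alpha$; all remaining blocks can be placed into individual $\alpha$-avoiding cells, and the $\alpha$-containing blocks can be gridded recursively. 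If $\sigma$ is simple of length at least $4$, then any occurrence of $\alpha$ in $\pi$ must spread across several blocks, so within each $\pi_i$ we only see occurrences of a proper sub-pattern $\alpha'$ of $\alpha$; the inductive hypothesis applied to $\alpha'$ then grids each $\pi_i$, and these cell griddings are assembled through the fixed finite structure of $\sigma$ into an $\alpha$-avoiding gridding of $\pi$ whose size depends only on $|\sigma|$, $|\alpha|$, and $Q$.

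The main obstacle is controlling how the gridding bound accumulates through the substitution tree: one needs an invariant that strictly decreases at each recursive call (typically a combination of $|\alpha|$ and an auxiliary complexity of $\pi$) so that the induction terminates with a bound $K$ independent of $\pi$. A secondary subtlety is that a simple $\sigma$ can have unbounded size even though $\sigma$ is $\alpha$-free in its substitution role, so the argument must first bound the size of the relevant $\sigma$ — here one invokes the fact that if arbitrarily large simple permutations appeared in $\cC$, they would contain all large direct or skew sums of $\alpha$ through a Ramsey-type extraction, contradicting the hypothesis. Once these pieces are in place, the recursion yields the required uniform $K$, completing the equivalence.
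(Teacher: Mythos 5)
First, note that the paper does not prove this statement at all: it is imported verbatim as a Fact with a citation to Vatter, so there is no internal proof to compare against, and your attempt has to stand on its own. Your forward direction is fine: the pigeonhole count of how many of the $Q$ diagonal (or anti-diagonal) blocks can be cut by the $2(K-1)$ grid lines is correct and gives $Q=2K-1$.

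The converse, however, has genuine gaps at exactly the points where the theorem is hard. In the sum-decomposable case, the step ``all remaining blocks can be placed into individual $\alpha$-avoiding cells'' does not work: there may be unboundedly many $\alpha$-avoiding blocks, so giving each its own cell makes $K$ depend on $\pi$, while merging a run of consecutive $\alpha$-avoiding blocks into one cell fails whenever $\alpha$ is itself sum-decomposable, since a direct sum of $\alpha$-avoiders can then contain $\alpha$ (already for $\alpha=12$); this is precisely the case the theorem must handle. In the simple case, the claim that every occurrence of $\alpha$ must spread across several blocks is false --- a single block $\pi_i$ of $\sigma[\pi_1,\dotsc,\pi_m]$ can contain $\alpha$ outright --- so the reduction to a proper subpattern $\alpha'$ and hence the induction on $|\alpha|$ collapses. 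The auxiliary claim that arbitrarily large simple permutations in $\cC$ would contain large $\bigoplus^Q\alpha$ or $\bigominus^Q\alpha$ is also false: the hypotheses do not bound the simple permutations of $\cC$ at all (take $\cC=\Av(321)$ and $\alpha=321$, where $Q=1$ works yet $\cC$ contains arbitrarily long increasing oscillations, which are simple and contain no copy of $\alpha$ whatsoever). Finally, you yourself name the remaining issue --- a decreasing invariant making the recursion terminate with a $K$ independent of $\pi$ --- and leave it unresolved; as written the recursion is on the same $\alpha$ applied to smaller permutations, which gives no uniform bound. (A small additional slip: the base case $|\alpha|=1$ is not vacuous --- there the gridding condition forces cells to be empty.) So what you have is a correct easy direction plus a sketch of the hard direction whose key steps fail; the actual result should simply be used as the cited black box, or proved along Vatter's lines rather than by substitution decomposition.
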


A \emph{horizontal alternation} is a permutation in which all the odd values appear to the left of 
all the even values or vice versa. A \emph{vertical alternation} is an inverse of a horizontal 
alternation. An \emph{alternation} is a horizontal or vertical alternation.

\begin{fact}[\protect{Huczynska--Vatter~\cite[Proposition 3.2]{HV}}]\label{fac-alt}
For a permutation class $\cC$ that does not contain any monotone juxtaposition as a subclass there 
is a constant $K=K_{\ref{fac-alt}}$ such that $\cC$ does not contain any alternation of size~$K$.
\end{fact}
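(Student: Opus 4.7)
The plan is to prove the contrapositive: if $\cC$ contains alternations of arbitrarily large sizes, then some monotone juxtaposition is a subclass of~$\cC$. The core is a two-level Erd\H{o}s--Szekeres extraction that produces a generator of a monotone juxtaposition directly inside~$\cC$.

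By pigeonhole over the four elementary alternation flavors (horizontal vs.\ vertical, odd on left/bottom vs.\ right/top) and the symmetries $\cC\mapsto\cC^r,\cC^c,\cC^{-1}$, I may assume that $\cC$ contains a horizontal alternation $\pi_n$ of size~$2n$ with odd values on the left, for every~$n$. In such a~$\pi_n$, the left half is a permutation of $\{1,3,\dotsc,2n-1\}$ and the right half is a permutation of $\{2,4,\dotsc,2n\}$. Apply Erd\H{o}s--Szekeres to the left half of $\pi_n$ to extract a monotone subsequence $v_1,\dotsc,v_m$ of length $m=\lceil\sqrt n\rceil$, and then, for each~$i$, consider the even value $v_i+1$, which lies in the right half at some position $n+q_i$; apply Erd\H{o}s--Szekeres again to the sequence $(q_1,\dotsc,q_m)$ to extract a monotone subsequence of length $k=\lceil\sqrt m\rceil=\Theta(n^{1/4})$.

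The resulting $2k$ values---the selected $v_i$'s on the left together with their immediate even successors $v_i+1$ on the right---form a sub-pattern of $\pi_n$ of size~$2k$ that is order-isomorphic to the canonical \emph{interleaved} generator of one of the four ``$2\times 1$'' monotone juxtapositions, the type being determined by the monotonicities of the two Erd\H{o}s--Szekeres steps. For instance, when both steps produce increasing subsequences, the extracted sub-pattern is exactly $\xi_k:=(1,3,\dotsc,2k-1,\,2,4,\dotsc,2k)$: the odd values $v_{i_1}<\dotsb<v_{i_k}$ appear in increasing position order on the left by construction, the evens $v_{i_1}+1<\dotsb<v_{i_k}+1$ appear in increasing position order on the right, and the alternating interleaving $v_{i_j}<v_{i_j}+1<v_{i_{j+1}}$ holds because $v_{i_{j+1}}$ is an odd integer exceeding the odd $v_{i_j}$ by at least~$2$. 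The other three sign combinations yield analogous interleaved generators of $\Inc|\Dec$, $\Dec|\Inc$, and $\Dec|\Dec$. A final pigeonhole over the four sign combinations across~$n$ ensures that generators of a single juxtaposition type appear in $\cC$ at arbitrarily large sizes.

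It remains to verify that the existence of the interleaved generators inside $\cC$ forces the corresponding juxtaposition class itself to be a subclass. For $\Grid\!\left(\begin{smallmatrix}\Inc & \Inc\end{smallmatrix}\right)$, the universality of~$\xi_k$ is checked as follows: given any $\sigma\in\Grid\!\left(\begin{smallmatrix}\Inc & \Inc\end{smallmatrix}\right)$ of size at most~$k$ with gridding split $\sigma=\sigma_L\sigma_R$, assign each value $v$ of $\sigma$ (processed in increasing order of~$v$) a $y$-coordinate in $\{1,\dotsc,2k\}$, picking the smallest coordinate exceeding $y(v-1)$ of parity matching whether $v\in\sigma_L$ (odd) or $v\in\sigma_R$ (even); the inductive bound $y(v)\le 2v$ ensures the coordinates fit, and the parity--position correspondence in~$\xi_k$ realizes the embedding of $\sigma$ into~$\xi_k$. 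The other three generators are universal for their classes by identical arguments; the vertical alternation case reduces to the horizontal one via $\pi\mapsto\pi^{-1}$ and yields the four $1\times 2$ juxtapositions, completing the contrapositive. The main obstacle was producing \emph{interleaved} rather than arbitrary sub-patterns of the juxtaposition class, and this is handled by the paired second Erd\H{o}s--Szekeres step, which couples each chosen odd $v_i$ with its immediate successor $v_i+1$ and thereby forces the chosen odd and even values to interleave strictly.
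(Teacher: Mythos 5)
Your argument is correct: the double Erd\H{o}s--Szekeres extraction (first on the left half, then on the positions of the paired even successors) does produce an interleaved generator such as $\xi_k=(1,3,\dotsc,2k-1,2,4,\dotsc,2k)$, your greedy parity-assignment correctly shows such generators are universal for the corresponding monotone juxtaposition, and the pigeonhole/symmetry reductions are sound. The paper itself gives no proof --- it quotes this as Proposition~3.2 of Huczynska--Vatter --- and your proof is essentially a faithful reconstruction of that standard argument, so there is nothing further to compare.
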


\begin{proof}[Proof of Proposition~\ref{pro-avoid}]
Let $\cC$ be an $\bbX$-avoiding class, and fix $\pi\in \cC$. 

By Fact~\ref{fac-alt}, there is a constant $K_{\ref{fac-alt}}$ such that $\cC$ does not contain 
any alternation of size $K_{\ref{fac-alt}}$ or more. We claim that this implies that in any gridding 
of $\pi$, every cell has intervalicity less than~$K_{\ref{fac-alt}}$. To see this, suppose for 
contradiction that there is a cell $X$ such that the projection of $\pi[X]$ on the horizontal or 
vertical axis has intervalicity at least~$K_{\ref{fac-alt}}$. Without loss of generality, suppose 
that the projection is onto the horizontal axis, let $\ell\ge K_{\ref{fac-alt}}$ be its 
intervalicity, and let $P_1, P_2,\dotsc, P_\ell$ be the intervals, numbered left to right, whose 
union is the projection of~$\pi[X]$. For any $i\in[\ell-1]$, fix an integer $j_i$ such that $\max 
P_i<j_i<\min P_{i+1}$. Since $\pi(j_i)$ is not in $\pi[X]$, it is either smaller than all the 
values in $\pi[X]$ or larger than all the values in~$\pi[X]$. Without loss of generality, there 
are at least $(\ell-1)/2$ values of $i\in[\ell-1]$ for which $\pi(j_i)$ is larger than all the 
values in $\pi[X]$. We can then choose at least $(\ell+1)/2$ values in $\pi[X]$ which form a 
vertical alternation with these $(\ell-1)/2$ `large' values $\pi(j_i)$, showing that $\pi$ has an 
alternation of size $\ell\ge K_{\ref{fac-alt}}$, which is impossible. Thus, in any gridding of 
$\pi$, every cell has intervalicity less than~$K_{\ref{fac-alt}}$. 

By Fact~\ref{fac-monot}, there is a constant $K_{\ref{fac-monot}}$ such that every permutation of 
$\cC$ is a union of at most $K_{\ref{fac-monot}}$ monotone sequences. Let us therefore fix such a 
partition of $\pi$ into monotone sequences, and suppose that the partition contains 
$p$ increasing sequences $I_1,\dotsc, I_p$ and $q$ decreasing sequences $D_1,\dotsc,D_q$, for some 
$p+q\le K_{\ref{fac-monot}}$. Define $I_\cup=\bigcup _{i=1}^p I_i$ and $D_\cup=\bigcup_{i=1}^q D_i$. 
Notice that $I_\cup$ avoids $\delta_{p+1}$ and $D_\cup$ avoids~$\iota_{q+1}$. 

We will now show that $\pi$ has a gridding with bounded number of cells in which each cell is 
either disjoint from $I_\cup$ or disjoint from $D_\cup$. For any choice of 
$i\in[p]$ and $j\in[q]$, we can easily construct a $2\times 2$ gridding of $\pi$ such that the 
increasing subsequence $I_i$ intersects neither the top-left nor the bottom-right cell, while $D_j$ 
intersects neither the bottom-left nor the top-right cells. Combining such griddings for all the 
choices of $(i,j)\in[p]\times[q]$, we obtain a single gridding of size $(pq+1)\times (pq+1)$ in 
which every cell avoids either $I_\cup$ or $D_\cup$, and in particular, every cell avoids 
$\iota_{q+1}$ or~$\delta_{p+1}$. 

For the next step, we note that there are constants $A,B$ such that neither $\bigoplus^A 
(\iota_B\ominus\iota_B)$ nor $\bigominus^A (\iota_B\ominus\iota_B)$ belongs to~$\cC$. Indeed, if 
$\cC$ contained $\bigoplus^A (\iota_B\ominus\iota_B)$ for each $A$ and $B$, then $\cC$ would 
contain $\PBT$ as a subclass, and if $\cC$ contained $\bigominus^A(\iota_B\ominus\iota_B)$ for all 
$A,B$, then $\cC$ would contain $\cL^r$ as a subclass. By Fact~\ref{fac-sum} applied to 
$\alpha=\iota_B\ominus\iota_B$, we can refine the gridding of $\pi$ obtained in the previous step 
by adding at most $K_{\ref{fac-sum}}$ rows and columns to obtain a gridding in which no cell  
contains $\iota_B\ominus\iota_B$. By a symmetric argument, another round of refinement ensures that 
no cell of the gridding contains $\delta_{\overline B}\oplus\delta_{\overline B}$ for a
constant~$\overline B$.

Invoking Proposition~\ref{pro-inversion} (or its symmetric version) to each cell of the resulting gridding, and choosing $K$ large enough in terms of the constants appearing in the previous arguments, we conclude that the gridding obtained by this procedure has all the properties from the statement of the proposition.
\end{proof}

For a permutation $\pi$, the \emph{adjacency graph} $G(\pi)$ is a graph whose vertices are the 
elements of $\pi$, and two vertices are connected by an edge if and only if the corresponding two 
elements have adjacent positions or adjacent values. In particular, the adjacency graph is a union 
of two paths, one visiting the vertices in left-to-right order, and the other in bottom-to-top 
order.

The \emph{tree-width} of the permutation $\pi$, denoted $\tw(\pi)$, is then defined as the 
tree-width of $G(\pi)$. Note that there are permutations of size $n$ with tree-width 
$\Omega(n)$~\cite{Ahal2008}.

Now, we introduce the concept of \emph{sorting diagram}, which will offer a convenient way to 
represent a sequence of steps performed during the sorting of a sequence. Let $t$ and $n$ 
be positive integers, and let $\vec\sigma=(\sigma^1, \sigma^2,\dotsc,\sigma^t)$ be a $t$-tuple of 
permutations of $[n]$. 
Define the permutation $\pi=\sigma^t\circ\sigma^{t-1}\circ\dotsb\circ\sigma^2\circ\sigma^1$. In 
particular, by applying the steps $\sigma^1,\dotsc,\sigma^t$ in this order, we would sort 
the sequence $\pi^{-1}$ into an increasing sequence. 

Let $P$ be the set $[(t+1)n]\times [(t+1)n]$, viewed as an integer grid in the plane.  We partition 
$P$ into blocks of size $n\times n$ as follows: for $a,b\in[t+1]$, let $P_{a,b}$ be the set 
$\{(i,j)\in P; (a-1)n<i\le an \land  (b-1)n<j\le bn \}$. 
For a permutation $\sigma\in\cS_n$, let $P_{a,b}[\sigma]$ denote the set of $n$ points forming a 
translated copy of the diagram of $\sigma$ inside the block $P_{a,b}$; formally,
\[
P_{a,b}[\sigma] =\left\{ \big( (a-1)n+i, (b-1)n+\sigma(i) \big); i\in[n]\right\}.
\]
\begin{figure}
\centerline{\includegraphics[width=0.5\textwidth]{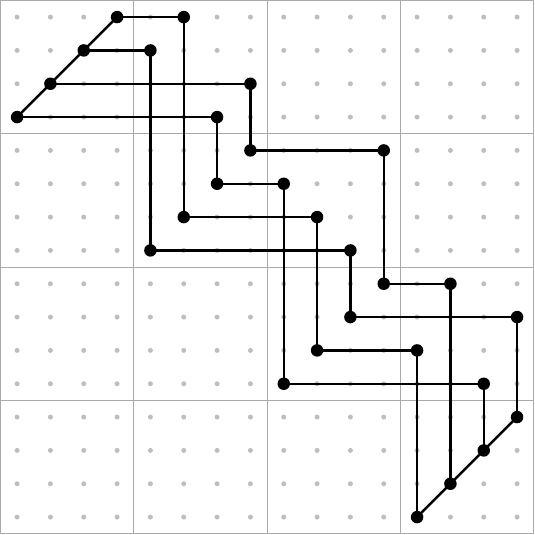}}
\caption{The sorting diagram of the triple of permutations $\sigma^1=2413, \sigma^2=3214, 
\sigma^3=3412$.}\label{fig-sortdia}
\end{figure}
\newcommand{\SD}{SD}
We are now ready to describe the sorting diagram of the $t$-tuple of permutations  
$\vec\sigma=(\sigma^1, \sigma^2,\dotsc,\sigma^t)$; refer to Figure~\ref{fig-sortdia}. Recall that 
$\iota_n$ is the increasing permutation $(1,2,\dotsc,n)$.
The \emph{sorting diagram} of the $t$-tuple $\vec\sigma=(\sigma^1,\dotsc,\sigma^t)$, denoted 
$\SD\equiv\SD(\vec\sigma)$, is the graph whose vertex set is the point set 
\[
V(\SD)=\bigcup_{i=1}^{t+1} P_{t+2-i,i}[\iota_n] \cup \bigcup_{i=1}^t P_{t+2-i,i+1}[\sigma^i].
\]
In particular, the vertex set consists of $t+1$ copies of the diagram of $\iota_n$ inside the blocks 
forming the decreasing diagonal of $P$, together with the copies of the diagrams of 
$\sigma^1,\dotsc,\sigma^t$, appearing right-to-left inside the blocks just above the decreasing 
diagonal. We refer to the $n$ vertices inside the bottom-right block $P_{t+1,1}$ as \emph{starting 
vertices}, and we label them $s_1,\dotsc,s_n$ in left-to-right order. Similarly, the vertices inside 
the top-left block $P_{1,t+1}$ are the \emph{terminal vertices}, and are labelled $t_1,\dotsc,t_n$ 
in left-to-right order.
The edge set of $\SD$ is determined as follows:
\begin{itemize}
\item Any two vertices of $V(\SD)$ that lie on the same horizontal line are connected by an edge. We 
call these edges \emph{horizontal edges}. Note that every vertex is incident to a unique horizontal 
edge, except for the $n$ starting vertices.
\item Any two vertices of $V(\SD)$ that lie on the same vertical line are connected by an edge. We 
call these edges \emph{vertical edges}. Note that every vertex is incident to a unique vertical
edge, except for the $n$ terminal vertices. Note also that the horizontal and vertical edges 
together form $n$ vertex-disjoint paths, each connecting a starting vertex to a distinct terminal 
vertex, with the vertex $s_i$ being connected to $t_{\pi(i)}$, where 
$\pi=\sigma^t\circ\dotsb\circ\sigma^1$. 
\item Finally, for any $i\in[n-1]$, the vertex $s_i$ is connected to $s_{i+1}$ and $t_i$ to 
$t_{i+1}$. We call these edges as the \emph{diagonal edges}.
\end{itemize}
This completes the description of the graph $\SD$.

\begin{lemma}\label{lem-twsd}
For a $t$-tuple of permutations $\vec\sigma=(\sigma^1,\dotsc,\sigma^t)$ and their composition 
$\pi=\sigma^t\circ\dotsb\circ\sigma^1$, the tree-width of the graph $\SD(\vec\sigma)$ is at least as 
large as the tree-width of the incidence graph $G(\pi)$ of~$\pi$.
\end{lemma}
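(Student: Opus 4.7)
The plan is to exhibit $G(\pi)$ as a minor of $\SD(\vec\sigma)$, after which the conclusion follows from the standard fact that tree-width is monotone under taking minors.

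The first step is to identify, for each $i\in[n]$, a path $Q_i$ in $\SD(\vec\sigma)$ from the starting vertex $s_i$ to the terminal vertex $t_{\pi(i)}$ using only horizontal and vertical edges. Since every non-starting vertex has a unique horizontal neighbor and every non-terminal vertex has a unique vertical neighbor, such a path arises by starting at $s_i$ and simply alternating vertical and horizontal steps until a terminal vertex is reached. A straightforward unwinding of coordinates along the staircase of non-empty blocks shows that each pair of consecutive steps (first vertical, then horizontal) transforms the current row index from $y$ to $\sigma^k(y)$ while advancing one block up-and-left in the staircase; after $t$ such rounds the row index has become $\pi(i)$ and the walk has reached the top-left block, landing at $t_{\pi(i)}$.

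The second step is to verify that the paths $Q_1,\dotsc,Q_n$ are pairwise vertex-disjoint. Each $Q_i$ passes through exactly one vertex of each of the $2t+1$ non-empty blocks, and inside any such block the $n$ available vertices occupy pairwise distinct rows and columns; since the values $(\sigma^k\circ\dotsb\circ\sigma^1)(i)$ are distinct as $i$ varies, two distinct paths can never meet inside a block.

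The third step is to contract each path $Q_i$ to a single vertex $v_i$, so that the $Q_i$'s serve as the branch sets realizing $G(\pi)$ as a minor of $\SD(\vec\sigma)$. All horizontal and vertical edges are absorbed into the branch sets, and the only edges remaining between the $v_i$'s come from the diagonal edges: the bottom ones give $v_iv_{i+1}$ for $i\in[n-1]$ (adjacency by position), while the top ones give $v_{\pi^{-1}(j)}v_{\pi^{-1}(j+1)}$ for $j\in[n-1]$ (adjacency by value, using that $t_j\in Q_{\pi^{-1}(j)}$ as a consequence of step~1). Identifying $v_i$ with the element at position $i$ in $\pi$, this contracted graph is precisely $G(\pi)$, yielding $\tw(\SD(\vec\sigma))\ge\tw(G(\pi))$.

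The main care needed is in the coordinate bookkeeping of the first step: the correspondence between block positions and partial compositions $\sigma^k\circ\dotsb\circ\sigma^1$ must be set up so that the contracted graph comes out as $G(\pi)$ on the nose rather than as $G(\pi^{-1})$ or a symmetry thereof. The disjointness of the paths and the reading-off of the surviving diagonal edges are then essentially automatic.
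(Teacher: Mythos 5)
Your proof is correct and is essentially the same as the paper's: the paper also exhibits $G(\pi)$ as a minor of $\SD(\vec\sigma)$ by contracting all horizontal and vertical edges (equivalently, contracting the $n$ vertex-disjoint staircase paths, whose existence the paper already notes when defining the sorting diagram) and then invokes minor-monotonicity of tree-width. You have merely spelled out the coordinate bookkeeping that the paper leaves implicit.
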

\begin{proof}
The graph $G(\pi)$ is a minor of $\SD(\vec\sigma)$, since by contracting all the horizontal and 
vertical edges in $\SD(\vec\sigma)$, we obtain precisely the graph $G(\pi)$. The lemma follows, 
since tree-width is minor-monotone.
\end{proof}

\begin{proposition}\label{pro-embed}
Let $\cC$ be an $\bbX$-avoiding class. There are constants $Q$ and $R$, such that for any 
$t\in\bbN$, any $n\in\bbN$, and any $t$-tuple $\vec\sigma=(\sigma^1,\dotsc,\sigma^t)$ of 
permutations from $\cC_n$, the sorting diagram $\SD(\vec\sigma)$ can be drawn on a surface of genus 
at most $Q\cdot t$ with at most $R\cdot t\cdot n$ edge-crossings.
\end{proposition}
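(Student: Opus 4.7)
The plan is to draw the sorting diagram block by block. The $t+1$ identity blocks contribute neither genus nor crossings when drawn by straight segments, so the task reduces to drawing each of the $t$ permutation blocks (each carrying one $\sigma^i \in \cC_n$) with $O(1)$ handles and $O(n)$ crossings; summing over $t$ then gives the required $O(t)$ genus and $O(tn)$ crossings. Inside a permutation block, I view the incident horizontal edges as wires entering from the left and the vertical edges as wires exiting through the bottom, so that each vertex of $\sigma^i$ is the unique intersection of a horizontal wire with its paired vertical wire; in this picture the ``naive'' crossings in the block correspond exactly to the inversions of $\sigma^i$, which is why we cannot afford to draw directly and must exploit the gridding of Proposition~\ref{pro-avoid}.

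For each $\sigma^i$ I apply Proposition~\ref{pro-avoid} to obtain a $K\times K$ gridding with the four listed properties, and then process each cell $X$ separately. By Proposition~\ref{pro-avoid}(4), either $\pi[X]$ or its reverse has at most $K n_X$ inversions. In the former case I draw $\pi[X]$ in place, collecting at most $K n_X$ internal crossings and using no handle. In the latter case I use a single handle to topologically reflect the cell, so that from the cell's perspective the incoming horizontal wires arrive in reversed order; the drawing inside the reflected cell then realizes the reverse of $\pi[X]$, which has $O(n_X)$ inversions and is therefore drawable with $O(n_X)$ crossings. The bounded intervalicity of $\pi[X]$ is what makes this local topological operation affordable: the cell's boundary interacts with the rest of the block through only $O(K)$ bundles of consecutive wires, so one handle suffices to invert their order without paying per-wire cost. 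Summing over the at most $K^2$ cells of a block gives $O(1)$ handles and $\sum_X O(n_X) = O(n)$ crossings, and another $O(n)$ for routing the wires that traverse several cells of a common grid row or column (which, by the same intervalicity bound, form only $O(K)$ bundles at each interface).

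The main obstacle is the topological flip: one must show carefully that a single handle suffices to reverse the $O(K)$ bundles entering an ``almost decreasing'' cell, and that the reflected cell patches consistently with its neighbouring cells and with the adjacent identity blocks. The bounded-intervalicity hypothesis is precisely what keeps this local cost at $O(1)$ rather than scaling with $n_X$; without it, flipping a cell of size $n_X$ could require $\Theta(n_X)$ handles and blow up the genus. All other steps—bookkeeping the crossings inside nearly-increasing cells, routing pass-through wires between cells, and the trivial drawing of identity blocks—are then routine applications of the gridding from Proposition~\ref{pro-avoid}, and yield the claimed constants $Q$ and $R$ depending only on $\cC$.
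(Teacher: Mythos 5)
Your overall architecture agrees with the paper's: decompose the drawing block by block, observe that only the blocks $S_i$ carrying a copy of $\sigma^i$ contribute crossings, apply the gridding of Proposition~\ref{pro-avoid} to each such block, and treat the cells differently depending on whether $\sigma^i[X]$ itself or its reverse has few inversions, using a genus-$O(1)$ topological flip (the paper uses a single cross-cap after a planar U-turn) in the latter case. That part is sound.

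However, there is a genuine gap in how you handle the edges that travel between the boundary of $S_i$ and a cell $X$. You write that these pass-through wires ``form only $O(K)$ bundles at each interface'' and therefore cost ``another $O(n)$'' crossings. This does not follow. A vertical wire bundle of size $a$ heading for a high cell $X$ must traverse lower cells, and if it crosses a horizontal wire bundle of size $b$ in the plane, it contributes $a\cdot b$ crossings, not $O(a+b)$. Concretely, take $\sigma^i$ with $\Theta(n)$ points in a top-left cell and $\Theta(n)$ points in a bottom-right cell of the gridding: every vertical edge to the first cell crosses every horizontal edge from the second, giving $\Theta(n^2)$ crossings however you route them in the plane, and bounded intervalicity does nothing to reduce this. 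This is precisely why the paper spends $O(K^3)$ \emph{handles} per block (one per nonempty interval in the $x$- or $y$-projection of each cell) on this inter-cell routing, so that these edges reach their destination cell with no crossings at all; you have omitted these handles and cannot replace them by a linear number of crossings.

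Relatedly, you have misattributed the role of the intervalicity condition. You say bounded intervalicity ``is precisely what keeps this local cost at $O(1)$'' for the cell flip, arguing that without it the flip would need $\Theta(n_X)$ handles. That is not so: a single cross-cap (or, with a bit more care, a single handle) reverses the order of any number of parallel strands at unit genus cost, independently of $n_X$ or of how the wires bunch into intervals. Intervalicity is needed for the \emph{other} part -- the inter-cell routing above -- because it bounds the number of handles you must attach per cell to guide its incident edges past the other cells. So the flip needs no intervalicity, and the routing needs handles, not crossings; your proposal has these two roles reversed.
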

\begin{proof}
The definition of the graph $\SD=\SD(\vec\sigma)$ already implies a drawing in the plane, where 
vertices are points and edges are straight-line segments. Unfortunately, such a drawing may have too 
many edge-crossings. Our strategy is to insert handles and cross-caps into this drawing, so that we 
eliminate enough of the crossings, without increasing the genus of the surface too much. 

Recall from the definition of sorting diagram that the vertices of $\SD$ are organized into $2t+1$ 
square blocks, which include $t+1$ diagonal blocks $P_{t+1,1},P_{t,2},\dotsc,P_{1,t+1}$, as well as 
$t$ blocks right above the diagonal, namely $P_{t+1,2},P_{t,3},\dotsc,P_{2,t+1}$. For ease of 
notation, for $i\in[t+1]$ let $D_i$ refer to the diagonal block $P_{t+2-i,i}$, and for $i\in[t]$ 
let $S_i$ refer to the block right above $D_i$, i.e., $P_{t+2-i,i+1}$. The vertices of $\SD$ induce 
a copy of $\iota_n$ inside every $D_i$, and a copy of $\sigma^i$ inside~$S_i$. 

In the straight-line drawing of $\SD$, the only edge-crossings that occur are inside the 
blocks $S_i$, between a vertical edge connecting a vertex $x$ from $D_i$ to a vertex $y$ in $S_i$, 
and a horizontal edge connecting a vertex $x'$ in $S_i$ to a vertex $y'$ in $D_{i+1}$. Notice that 
such a pair of edges has a crossing if and only if $(y,x')$ corresponds to a decreasing subsequence 
of~$\sigma^i$. We will now modify the drawing of edges (and the surface they are drawn upon) inside 
each block $S_i$ for $i\in[t]$, while the parts of the drawing inside the $D_i$'s are unaffected. 

\begin{figure}
 \centerline{\includegraphics{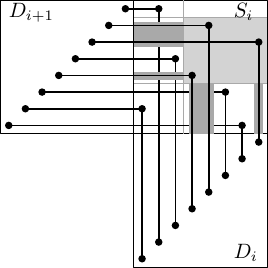}}
 \caption{A fragment of a sorting diagram, induced by three consecutive blocks $D_i$, $S_i$ and 
$D_{i+1}$. The light gray rectangle is a cell of the gridding of $\sigma^i$. The four dark gray 
rectangles represent the two horizontal and two vertical handles attached 
to this cell.}\label{fig-handles}
\end{figure}

Fix $i\in[t]$. Let $K=K(\cC)$ be the constant from Proposition~\ref{pro-avoid}. As the first step, 
apply to the copy of $\sigma^i$ inside $S_i$ the gridding described by Proposition~\ref{pro-avoid}. 
We will now add handles to our surface, and route the edges along these handles, to ensure that no 
two edges cross, unless they have endpoint inside the same cell of the gridding. Refer to 
Figure~\ref{fig-handles}.

Let $X$ be a cell of 
the gridding of $\sigma^i$. By Proposition~\ref{pro-avoid}, $\sigma^i[X]$ has intervalicity at 
most~$K$. In particular, there are $K$ disjoint intervals $I_1,\dotsc,I_K$ (some possibly empty) 
such that a vertical edge from $D_i$ to $S_i$ has an endpoint in $X$ if and only if its projection 
to the $x$-axis is a point from $I_1\cup I_2\cup\dotsb\cup I_K$. For each $j\in[K]$ such that 
$I_j\neq\emptyset$, we attach a new handle $H^|_j$ to our surface, oriented vertically, whose bottom 
end is near the bottom edge of $S_i$, the top end is near the bottom edge of $X$, and its horizontal 
position corresponds to the interval $I_j$ (in case $X$ is in the bottom row of the gridding, we omit 
these handles and leave the vertical edges into $X$ unchanged). All the vertical edges whose 
projection on the $x$-axis lies in $I_j$ are then routed through $H^|_j$, from the point when they 
cross the boundary from $D_i$ to $S_i$, till the point when they reach the bottom of edge of the 
cell~$X$. This way, we remove any crossing between a vertical edge incident with a point in~$X$ and a 
horizontal edge incident with a cell $X'$ in the gridding which lies below~$X$.

We perform an analogous operation with the horizontal edges. Again, for a cell $X$, the projection 
of $\sigma^i[X]$ on the $y$-axis forms $K$ intervals $J_1,\dotsc,J_K$, and for each such nonempty
interval $J_\ell$ we attach a horizontal handle $H^-_\ell$, whose left endpoint is near the 
boundary between $S_i$ and $D_{i+1}$ and its right endpoint is near the left edge of~$X$; if $X$ is 
in the leftmost column of the gridding, these handles are omitted. The 
horizontal edges projecting into $J_\ell$ are then routed through the handle $H^-_\ell$. 

For each $i\in[t]$, this operation adds at most $K$ vertical and $K$ horizontal handles 
incident to any given cell $X$ of the gridding of $\sigma^i$, so at most $2K^3t$ handles overall.
After these handles are added, a vertical edge from $D_i$ to $S_i$ may cross a horizontal edge 
from $S_i$ to $D_{i+1}$ only if their endpoints inside $S_i$ belong to the same cell of the 
gridding of~$\sigma^i$. Unfortunately, there may still be too many such edge-crossings, and so we 
may need to perform additional modifications inside a cell $X$ to reduce the number of crossings. 
The goal is for the number of crossings inside $X$ to be linear in the size of~$\sigma^i[X]$.

\begin{figure}
 \centerline{\includegraphics{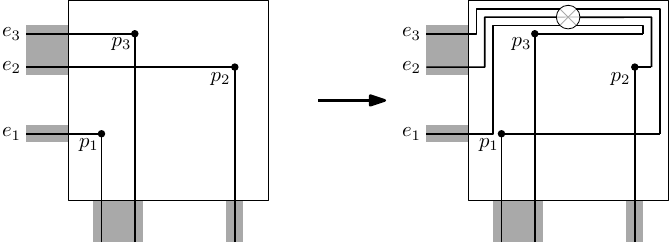}}
 \caption{A modification of horizontal edges inside a cell $X$ of the 
gridding of $\sigma^i$. The gray rectangles represent the handles adjacent to $X$, 
and the circle near the top boundary of the modified drawing represents a 
cross-cap.}\label{fig-uturn}
\end{figure}

Consider again a cell $X$ of the gridding of $\sigma^i$, and let $n_X$ be the size of 
$\sigma^i[X]$. Observe that at this point, the number of edge-crossings inside $X$ is equal to the 
number of decreasing subsequences of length 2 in $\sigma^i[X]$. By Proposition~\ref{pro-avoid}, 
$\sigma^i[X]$ contains either at most $K\cdot n_X$ decreasing subsequences of length 2, or at most 
$K\cdot n_X$ increasing subsequences of length 2. In the former case, we do not need to modify the 
drawing of the edges incident to $X$ any further. Consider now that the second situation occurs; 
refer to Figure~\ref{fig-uturn}.

Let $p_1, \dotsc,p_{n_X}$ be the points of $\sigma^i[X]$, numbered bottom to top, and let $e_j$ be 
the horizontal edge incident to~$p_j$.  
We will modify the drawings of all the horizontal edges inside $X$ as follows: the edge $e_j$, 
instead of heading straight to the left from $p_j$ towards the horizontal handle leading to 
$D_{i+1}$, will instead head straight to the right from $p_j$, until it reaches close to the right 
edge of $X$, then it will turn upwards, follow the right edge towards the top-right corner of $X$, 
then turn left and follow the top edge of $X$. All the horizontal edges will perform this `U-turn', 
keeping their mutual positions so that they do not introduce any mutual crossings. 
In particular, as the edges reach the top edge of $X$ and head to the left, the edge $e_1$ will be 
the topmost, followed by $e_2$, $e_3$ etc. However, we need the edges to reach the handles in the 
opposite bottom-to-top order. To fix this, we attach a cross-cap near the top edge of $X$ and let 
all the edges $e_i$ pass through it. This reverses their bottom-to-top order, and it is easy to then 
extend the edges along the top and left edge of $X$ towards their designated handles, without 
introducing any new crossings. After this modification, an edge $e_j$ crosses a vertical edge 
incident to a vertex $p_k$ if and only if $(p_j,p_k)$ form an increasing subsequence of length 2 in 
$\sigma^i[X]$. In particular, there are now at most $K\cdot n_X$ crossings inside~$X$.

In the end, we obtain a drawing of $\SD(\vec\sigma)$ on a surface with at most $2K^3t$ handles and 
at most $K^2t$ cross-caps, and in particular, the genus of the surface is at most $(4K^3+K^2)t$. 
Each block $S_i$ contains at most $Kn$ edge-crossings, so there are at most $Knt$ edge-crossings 
overall.
\end{proof}

Our next tool is an inequality that relates the genus, the crossing number and the tree-width of a 
graph. Its proof is based an idea of Dujmovi\'c, Eppstein and Wood~\cite{DEW}.

\begin{proposition}\label{pro-gctw}
Every graph on $n$ vertices that can be drawn on a surface of genus $g$ with at most $x$ 
edge-crossings has treewidth $O\left(\sqrt{(g+1)(n+x)}\right)$.
\end{proposition}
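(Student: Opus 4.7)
The plan is to reduce to the crossing-free case by \emph{planarizing} the drawing. For each of the $x$ crossings, I would insert a new degree-$4$ vertex at the crossing point and split the two crossing edges into four edges meeting at that vertex. The resulting graph $G^*$ has $n+x$ vertices and admits a crossing-free drawing on the same surface of genus~$g$. To this graph I would apply the known bound on the treewidth of bounded-genus graphs, namely that any graph embeddable on a surface of genus $g$ with $N$ vertices has treewidth $O(\sqrt{(g+1)N})$ (a consequence of the Gilbert--Hutchinson--Tarjan separator theorem for bounded-genus graphs, plus the classical equivalence between small balanced separators in every subgraph and small treewidth). Applying this to $G^*$ yields $\tw(G^*) = O(\sqrt{(g+1)(n+x)})$.

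The heart of the proof is then to transfer this bound from $G^*$ back to $G$. The graph $G$ is \emph{not}, in general, a minor of $G^*$: contracting any edge incident to a crossing vertex $v$ with neighbors $a,b,c,d$ (where $ab$ and $cd$ are the two original edges that cross at $v$) would create spurious adjacencies, so minor-monotonicity of treewidth is not directly available. Instead, I would modify a tree decomposition $(T,\{B_t\}_{t \in V(T)})$ of $G^*$ of width $w$ into one of $G$ as follows: remove every crossing vertex $v$ from every bag, and add its four neighbors $a,b,c,d$ to every bag that previously contained $v$. The subtree-connectedness condition is preserved, because each of $a,b,c,d$ was already present in some bag together with $v$ (since $va,vb,vc,vd$ are edges of $G^*$), so the subtree associated with each of these vertices is enlarged by a subtree that overlaps it. Every bag that previously contained $v$ now contains both endpoints of $ab$ and both endpoints of $cd$, so every edge of $G$ that was created by the planarization is witnessed; the remaining edges of $G$ coincide with non-crossing edges of $G^*$ and were already witnessed. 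Since each original bag contained at most $w+1$ crossing vertices, each of which contributes at most four new elements, the modified bags have size at most $4(w+1)$, giving $\tw(G) \le 4\,\tw(G^*) + 3 = O(\sqrt{(g+1)(n+x)})$.

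The main conceptual obstacle is precisely the failure of minor-monotonicity: $G$ is not a minor of its planarization, so the bound on $\tw(G^*)$ cannot be invoked as a black box, and the argument forces an explicit manipulation of tree decompositions in which one has to verify simultaneously the subtree-connectedness condition, the edge-coverage condition, and a constant-factor control on the bag size. A minor technicality is the case $g=0$, where the bounded-genus bound reduces to the Lipton--Tarjan planar separator theorem; the $(g+1)$ factor inside the square root is what allows the statement to cover both cases uniformly.
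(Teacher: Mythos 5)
Your overall strategy is the same as the paper's: planarize the drawing into a graph $G^*$ on $n+x$ vertices embedded on the genus-$g$ surface, invoke the bounded-genus treewidth bound on $G^*$, and then transfer a tree decomposition of $G^*$ back to $G$ by replacing each crossing vertex with nearby vertices of $G$. The paper attributes the transfer step to Dujmovi\'c, Eppstein and Wood, and your explanation of why minor-monotonicity fails and why one must therefore manipulate bags explicitly is exactly the right framing.

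However, there is a genuine gap in your substitution rule. You replace each crossing vertex $v$ by its four $G^*$-neighbors $a,b,c,d$, and you assert that $ab$ and $cd$ are the two original edges of $G$ crossing at $v$. This is only true if each edge of $G$ is crossed at most once. If an edge $e=pq$ is crossed at several points $v_1,\dotsc,v_k$, then in $G^*$ it becomes a path $p,v_1,\dotsc,v_k,q$, and the neighbors of an interior $v_i$ include other crossing vertices $v_{i-1},v_{i+1}$ rather than the original endpoints $p,q$. If you carry out your substitution (add the $G^*$-neighbors of every crossing vertex to its bags, then delete all crossing vertices), the edge-coverage condition fails for $e$: the bag that witnessed the $G^*$-edge $v_kq$ contains $q$ and, after substitution, also $q$ and the (deleted) $v_{k-1}$, but never acquires $p$; no bag ends up containing both $p$ and $q$. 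So the result is not a tree decomposition of $G$. The fix is to replace each crossing vertex not by its $G^*$-neighbors but by a fixed set of vertices of $G$ determined by the two original edges crossing there. The paper does precisely this: it orients the edges of $G$ and replaces each crossing vertex $w$ by the two \emph{tails} $t_e,t_f$ of the edges $e,f$ of $G$ that cross at $w$. This is well-defined even when edges are crossed repeatedly, and one checks connectivity by walking along the chain $p,v_1,\dotsc,v_k,q$: since consecutive vertices on the chain share a bag, the subtree of $t_e$ gets enlarged by a connected union, and the last bag (containing $v_k$ and the head $h_e$) acquires $t_e$, certifying edge $e$. Using just the tails also halves your blowup from a factor of $4$ to a factor of $2$; you could alternatively replace each crossing vertex by all four endpoints of the two original edges (still a set of vertices of $G$, giving your factor $4$), but not by the $G^*$-neighbors.
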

\begin{proof}
Suppose a graph $G$ on $n$ vertices has been drawn on a surface of genus $g$ with $x$ 
edge-crossings. Let us orient the edges of $G$ arbitrarily, so that each edge $e$ has a designated 
tail $t_e$ and head $h_e$. Next, let us replace each edge-crossing by a new vertex, thereby 
obtaining a crossing-free drawing of a new graph $H$ with $n_H=n+x$ vertices. Since $H$ can be drawn 
on a surface of genus $g$, it has treewidth at most 
$O(\sqrt{g\cdot n_H})=O(\sqrt{g(n+x)})$~\cite{DvorakNorin,GHT}. Let us fix an optimal tree 
decomposition $(T_H,\beta_H)$ of the graph $H$, where $T_H$ is the corresponding decomposition tree, 
and $\beta_H\colon V(T_H)\to 2^{V(H)}$ is the function that assigns to each node $q$ of $T_H$ a bag 
$\beta(q)$ of vertices of $H$ of size at most $O(\sqrt{g(n+x)})$. 

We now modify this decomposition to a tree decomposition of $G$, as follows. Let $w$ be any vertex 
of $H$ that is not a vertex of $G$. This means that in the original drawing of $G$, $w$ corresponded 
to a crossing of a pair of edges $e$ and $f$. Let $t_e$ and $t_f$ be the tails of the two edges. We 
modify the decomposition $(T_H, \beta)$ by replacing any occurrence of the vertex $w$ in  any of 
the bags by the two vertices $t_e$ and~$t_f$. After we perform these replacements for every vertex 
$w\in V(H)\setminus V(G)$, we may easily check that we obtain a tree decomposition of $G$ of width 
at most twice as large as the width of $(T_H,\beta)$ (see Dujmovi\'c et al.~\cite{DEW} for details).
\end{proof}

We now have all the ingredients to prove the harder part  of Theorem~\ref{thm-lin}.

\begin{proposition}\label{pro-lin1}
Any $\bbX$-avoiding class $\cC$ has $\wst{\cC}{n}=\Omega(\sqrt{n})$.
\end{proposition}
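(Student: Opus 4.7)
The plan is to assemble the three tools developed earlier in the section: Lemma~\ref{lem-twsd} (the sorting diagram of $\vec\sigma$ has tree-width at least that of the incidence graph of the composition), Proposition~\ref{pro-embed} (for an $\bbX$-avoiding class, the sorting diagram can be drawn with small genus and few crossings), and Proposition~\ref{pro-gctw} (such a drawing bounds tree-width in terms of $\sqrt{gn}$ and $\sqrt{gx}$). The only additional ingredient I need is a witness permutation whose incidence graph has tree-width $\Omega(n)$; this is provided by the result of Ahal and Rabinovich~\cite{Ahal2008} already cited in the section.

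Concretely, I would argue as follows. Let $\cC$ be an $\bbX$-avoiding class, and let $Q$, $R$ be the constants supplied by Proposition~\ref{pro-embed}. Fix any $n$ and pick a permutation $\pi\in\cS_n$ with $\tw(G(\pi))=\Omega(n)$. Set $t=\st{\cC}{\pi}$. If $t=+\infty$, then $\wst{\cC}{n}=+\infty$ and the bound is trivial. Otherwise, write $\pi^{-1}=\sigma^t\circ\sigma^{t-1}\circ\dotsb\circ\sigma^1$ with $\sigma^i\in\cC_n$, and form the sorting diagram $\SD\equiv\SD(\vec\sigma)$. By Proposition~\ref{pro-embed}, $\SD$ embeds in a surface of genus $g\le Q\cdot t$ with at most $x\le R\cdot tn$ edge-crossings; it has $N=(2t+1)n$ vertices. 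Proposition~\ref{pro-gctw} therefore yields
\[
\tw(\SD)\;=\;O\!\left(\sqrt{(g+1)(N+x)}\right)\;=\;O\!\left(\sqrt{t\cdot tn}\right)\;=\;O(t\sqrt{n}).
\]

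By Lemma~\ref{lem-twsd}, $\tw(G(\pi^{-1}))\le\tw(\SD)$. Since two elements of a permutation are adjacent in position (resp.\ value) if and only if the corresponding elements of its inverse are adjacent in value (resp.\ position), the adjacency graphs $G(\pi)$ and $G(\pi^{-1})$ are isomorphic. Hence
\[
\Omega(n)\;=\;\tw(G(\pi))\;=\;\tw(G(\pi^{-1}))\;\le\;\tw(\SD)\;=\;O(t\sqrt{n}),
\]
which rearranges to $t=\Omega(\sqrt n)$, as required.

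The conceptual work has been done in the propositions preceding the statement, so there is no real obstacle left in this step itself; the only points that need care are (i) observing that we may assume $\cC$ can sort $\pi$ at all, since otherwise the $+\infty$ case discharges the bound for free, and (ii) noticing the isomorphism $G(\pi)\cong G(\pi^{-1})$, which is what lets the Ahal--Rabinovich high-tree-width permutation play the role of a worst-case input for the composition $\sigma^t\circ\dotsb\circ\sigma^1=\pi^{-1}$ rather than for $\pi$.
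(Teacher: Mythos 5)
Your proposal is correct and follows exactly the paper's approach: combine a high-tree-width witness (Ahal--Rabinovich) with Lemma~\ref{lem-twsd}, Proposition~\ref{pro-embed} and Proposition~\ref{pro-gctw}, and compare the resulting lower bound $\Omega(n)$ with the upper bound $O(t\sqrt{n})$ to extract $t=\Omega(\sqrt n)$. You are in fact slightly more careful than the published proof, which tacitly writes $\pi=\sigma^t\circ\dotsb\circ\sigma^1$ where the definition of $\st{\cC}{\pi}$ really produces $\pi^{-1}$; your observation that $G(\pi)\cong G(\pi^{-1})$ is exactly what licenses this.
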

\begin{proof}
Let $\pi\in\cS_n$ be a permutation of tree-width $\Omega(n)$ (such permutations exist by a result of 
Ahal and Rabinovich~\cite[Theorem~3.4 and Proposition~3.6]{Ahal2008}). Let us put 
$t:=\st{\cC}{\pi}$, and let $\vec\sigma=(\sigma^1,\dotsc,\sigma^t)$ be a $t$-tuple of permutations 
from $\cC$ such that $\pi=\sigma^t\circ\sigma^{t-1}\circ\dotsb\circ\sigma^1$. 

Consider the sorting diagram $\SD=\SD(\vec\sigma)$. By Lemma~\ref{lem-twsd}, 
$\tw(\SD)\ge\tw(\pi)=\Omega(n)$. On the other hand, $\SD$ has $(2t+1)n$ vertices, and by
Proposition~\ref{pro-embed}, it can be drawn on a surface of genus $O(t)$ with
$O(tn)$ edge-crossings. Therefore, by Proposition~\ref{pro-gctw}, $\SD$ has tree-width 
$O(t\sqrt{n})$. Combining these bounds, we conclude that
\[
\wst{\cC}{n}\ge \st{\cC}{\pi}=t=\Omega(\sqrt{n}),
\]
proving the proposition.
\end{proof}

To complete the proof of Theorem~\ref{thm-lin}, we now turn to upper bounds.

\begin{proposition}\label{pro-lin2}
Any $\bbX$-containing class $\cC$ has $\wst{\cC}{n}=O(\log ^2 n)$.
\end{proposition}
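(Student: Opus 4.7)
By Observation~\ref{obs-comp}, it suffices to prove $\wst{\cD}{n}=O(\log^2 n)$ for each of the twelve classes $\cD\in\bbX$ separately. Modulo the symmetries of Observations~\ref{obs-sym} and~\ref{obs-r}, this reduces to three representative cases: the horizontal juxtaposition $\mathcal{J}:=\Grid\begin{pmatrix}\Inc & \Inc\end{pmatrix}$, the layered class $\cL$, and the parallel block transposition class $\PBT$. The vertical juxtapositions are inverses of horizontal ones; the remaining horizontal juxtapositions are reachable from $\mathcal{J}$ through composition bounds such as $\Grid\begin{pmatrix}\Inc&\Dec\end{pmatrix}\subseteq \mathcal{J}^{\circ 2}$ together with the observation that $\Dec$ lies in each of $\Grid\begin{pmatrix}\Dec&\Dec\end{pmatrix}$, $\Grid\begin{pmatrix}\Dec&\Inc\end{pmatrix}$, $\cL^r$, and $\PBT^r$, so that Observation~\ref{obs-r} transfers the bound from $\mathcal{J}$, $\cL$, and $\PBT$ to all remaining cases.

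For $\mathcal{J}$ the plan is a radix-sort argument. A single $\mathcal{J}$-step, parameterized by a split point $k$ and a $k$-element subset $S\subseteq[n]$, realises the \emph{stable binary partition} that places the entries at positions in $S$ (in increasing order of position) first, followed by the entries at positions outside $S$ (again in position order). Taking $S$ to be the set of positions whose current entry has bit $j$ equal to zero performs one pass of LSD radix sort; since values lie in $[n]$ and fit in $\lceil\log_2 n\rceil$ bits, $O(\log n)$ $\mathcal{J}$-steps suffice to sort any input.

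For $\cL$ and $\PBT$ the plan is to simulate a single $\mathcal{J}$-step in $O(\log n)$ steps of the respective class, which combined with the previous paragraph yields the claimed $O(\log^2 n)$ bound. The simulation proceeds through $\lceil\log_2 n\rceil$ doubling levels, maintaining the invariant that at level $\ell$ the chosen entries (those whose original position lies in $S$) occupy a prefix of each consecutive size-$2^\ell$ block in their original relative order. Advancing from level $\ell$ to $\ell+1$ amounts to, within each size-$2^{\ell+1}$ block of the form $(P_1, N_1, P_2, N_2)$ (chosen prefix and unchosen suffix from each half), cyclically rotating the middle segment $N_1 P_2$ so that the block becomes $(P_1, P_2, N_1, N_2)$. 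For $\PBT$, the combined operation across all size-$2^{\ell+1}$ blocks is a direct sum of $\cR$-elements---one cyclic rotation per block---hence a single $\PBT$-step per level. For $\cL$, each block rotation is decomposed via the classical three-reversal identity that sends $AB$ to $BA$ by reversing $A$, then reversing $B$, then reversing the whole, giving three parallel interval-reversal steps per level; at each of these three steps the operation is a direct sum of reversals on pairwise disjoint sub-intervals, hence in $\cL$.

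The main obstacle is the bookkeeping inside the simulation: the pivot positions (the sizes of $P_i,N_i$) depend on the block, so the rotated or reversed sub-intervals have block-dependent lengths. Crucially, however, these sub-intervals remain pairwise disjoint across blocks, and both $\cL$ and $\PBT$ are closed under direct sum, so the combined operation at each level is a single valid step of the corresponding class. Iterating over $\lceil\log_2 n\rceil$ doubling levels realises an arbitrary stable binary partition in $O(\log n)$ steps of $\cL$ or $\PBT$; composing with the $O(\log n)$-step radix sort for $\mathcal{J}$ yields $\wst{\cL}{n}=\wst{\PBT}{n}=O(\log^2 n)$, and the remaining cases follow by symmetry, completing the plan.
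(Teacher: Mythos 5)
Your proof is correct and takes a genuinely different route from the paper for the hard cases $\cL$ and $\PBT$. Both arguments start from the same $O(\log n)$ radix-sort for a monotone juxtaposition and then dispatch the remaining juxtapositions by symmetry and composition. The paper, however, proves $\wst{\PBT}{n}=O(\log^2 n)$ by a \emph{value-based} divide-and-conquer: in $O(\log n)$ $\PBT$-steps it pairwise merges runs of small and large values until they are fully separated, then recurses concurrently on the two halves, and finally deduces $\cL$ from $\PBT\subseteq\cL^{\circ 2}$. You instead show that a single juxtaposition step, viewed as a stable binary partition, can itself be \emph{simulated} by $O(\log n)$ steps of $\cL$ or of $\PBT$ via a bottom-up doubling scheme (one middle rotation per size-$2^{\ell+1}$ block, realized as one $\PBT$-step or as three $\cL$-reversal steps per level), so the $O(\log^2 n)$ bound falls out as the product of the two logarithmic factors. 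Your version is more modular, handles $\cL$ directly rather than through $\PBT$, and isolates the reusable fact that $\cL$ and $\PBT$ can simulate the juxtaposition at a $\log n$ overhead. One small bug in the symmetry bookkeeping: the containment $\Grid\begin{pmatrix}\Inc&\Dec\end{pmatrix}\subseteq \mathcal{J}^{\circ 2}$ is in the wrong direction for Observation~\ref{obs-comp}; to bound $\wst{\Grid(\Inc\;\Dec)}{n}$ you need the reverse inclusion, i.e.\ that your $\mathcal{J}$ lies in $\Grid\begin{pmatrix}\Dec&\Inc\end{pmatrix}^{\circ 2}$ (this is the inverse-symmetric form of the paper's $\cC_1\subseteq\cC_2^{\circ 2}$), after which Observation~\ref{obs-r} covers the rest of that orbit.
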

\begin{proof}
It is clearly enough to prove the proposition for a class $\cC$ that belongs to~$\bbX$. Let us 
first consider the monotone juxtapositions, starting with $\cC_1=\Grid\left(\begin{smallmatrix}\Inc\\ 
\Inc\end{smallmatrix}\right)$. We may easily observe that $\wst{\cC_1}{n}=O(\log n)$, e.g., by 
considering the elements of $[n]$ as binary strings of length $O(\log n)$ and sort them using 
radix-sort, which can be performed by $O(\log n)$  operations of $\cC_1$.

With the help of Observation~\ref{obs-r}, we conclude that also the classes $\cC_1^r$, $\cC_1^{-1}$, 
and $(\cC_1^r)^{-1}$ have worst-case sorting time $O(\log n)$.

Consider now the class $\cC_2=\Grid \left(\begin{smallmatrix}\Inc\\ \Dec\end{smallmatrix}\right)$. Notice 
that $\cC_1\subseteq \cC_2^{\circ 2}$, which implies $\wst{\cC_2}{n}=O(\log n)$ via 
Observation~\ref{obs-comp}. Again, the same bound applies to all the four symmetries of $\cC_2$, 
showing that all the eight monotone juxtapositions have worst-case sorting time $O(\log n)$.

We now deal with the four classes $\cL$, $\cL^r$, $\PBT$ and $\PBT^r$. Noting that $\PBT\subseteq 
\cL^{\circ 2}$, it is enough to prove that $\wst{\PBT}{n}=O(\log^2 n)$, and invoke 
Observations~\ref{obs-comp} and~\ref{obs-r} to see that the same upper bound 
applies to the other three classes as well.

Composing a sequence $\pi$ with a permutation $\sigma\in\PBT$ corresponds to partitioning $\pi$ 
into an even number of blocks $B_1B_2B_3B_4\dotsb B_{2k-1}B_{2k}$, each block being a (possibly 
empty) subsequence of consecutive entries of~$\pi$, and then exchanging the position of
$B_{2j-1}$ and $B_{2j}$ for each $j\in[k]$. For instance, the permutation 
$\pi=(3,8,1,6,2,5,9,4,7)\in\cS_9$ can be partitioned as $B_1=(3,8)$, $B_2=(1)$, $B_3=\emptyset$, 
$B_4=(6,2)$, $B_5=(5,9,4)$ and $B_6=(7)$, which after exchanging adjacent odd and even blocks 
yields the permutation $B_2B_1B_4B_3B_6B_5=(1,3,8,6,2,7,5,9,4)$, which is equal to $\sigma\circ\pi$ 
for $\sigma=(3,1,2,4,5,9,6,7,8)\in\PBT_9$.

We now describe a strategy to sort an arbitrary $\pi\in\cS_n$ by $O(\log^2 n)$ steps of the 
form described above. We say that an entry $\pi(i)$ is \emph{small} if $\pi(i)\le n/2$, otherwise 
it is \emph{large}. We will use a divide-and-conquer approach, where in the initial phase, which 
will take $O(\log n)$ steps, we move all the small entries to the left of all the large ones, and 
then proceed recursively to separately and concurrently sort the small entries and the large ones. 

In the initial phase, in every step we first partition the current input sequence $\pi'$ into small 
runs and large runs, where a \emph{small run} is a maximal sequence of consecutive small values, and 
a \emph{large run} is defined similarly. Suppose $\pi'$ starts with a small element, the other case 
being analogous. It is then partitioned as $S_1L_1S_2L_2\dotsb S_kL_k$, where each $S_j$ is a small 
run and $L_j$ a large run, with $L_k$ possibly empty. Then in a single sorting step, for every even 
$j\in[k]$ we exchange the positions of the $L_{j-1}$ and $S_j$, leaving all the other runs 
unaffected. In this manner, $S_j$ becomes adjacent to $S_{j-1}$ and $L_j$ (if it exists) to 
$L_{j-1}$. Thus, every small and large run will merge with another run of the same type, except 
perhaps the rightmost small run and the rightmost large run. In $O(\log n)$ steps, we reach a 
permutation with only a single small run and a single large run, which can be swapped to ensure 
that the small run is to the left of the large one. 

We then recursively apply the same strategy to concurrently sort the $\lfloor n/2\rfloor$ small 
elements and the $\lceil n/2\rceil$ large ones. In $O(\log^2 n)$ steps, the entire sequence is 
sorted.
\end{proof}

Propositions~\ref{pro-lin1} and \ref{pro-lin2} together prove Theorem~\ref{thm-lin}.

We remark that sorting by layered permutations corresponds to sorting by a sequence of pop-stacks 
in ``genuine series'', which has been considered (for two pop-stacks) by Atkinson and 
Stitt~\cite{AS}. 

We also note that sorting by layered permutations is related to the setting of sorting by 
length-weighted reversals with linear weights, considered by Bender et al.~\cite{Bender}. More 
precisely, a single step of sorting by layered permutations can be simulated by a 
sequence of length-weighted reversals of total cost $O(n)$ in the linear-weight setting 
of~\cite{Bender}, and in particular, an improvement to the $O(\log^2 n)$ upper bound for
$\wst{\cL}{n}$ would also improve the bounds in~\cite{Bender}.

%
%
%
%
%

\section{From \texorpdfstring{$\Omega(\log n)$}{Ω(log n)} to 1}\label{sec-all}

A simple counting argument, combined with the famous Marcus--Tardos theorem~\cite{MT} shows that any 
proper permutation class $\cC$, i.e., a class $\cC$ not containing all permutations, has at least 
logarithmic worst-case sorting time. 

\begin{proposition}
If $\cC$ is a permutation class that does not contain all permutations, then 
$\wst{\cC}{n}=\Omega(\log n)$. 
\end{proposition}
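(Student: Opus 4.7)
The plan is a straightforward counting argument enabled by the Marcus--Tardos theorem. Since $\cC$ does not contain all permutations, it avoids at least one fixed pattern, and by the Stanley--Wilf conjecture (proved by Marcus and Tardos) there is a constant $c=c(\cC)$ such that $|\cC_n|\le c^n$ for every~$n$.

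The key step is to bound the number of permutations that $\cC$ can sort in at most $k$ steps. A permutation $\pi\in\cS_n$ satisfies $\st{\cC}{\pi}\le k$ if and only if $\pi^{-1}\in\cC_n^{\circ k}$, and every element of $\cC_n^{\circ k}$ is, by definition, a composition of $k$ permutations from $\cC_n$. Hence
\[
|\{\pi\in\cS_n : \st{\cC}{\pi}\le k\}| \le |\cC_n|^k \le c^{nk}.
\]

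For $\cC$ to sort every permutation of $[n]$ within $k=\wst{\cC}{n}$ steps, this count must be at least $n!$. Taking logarithms and using Stirling's estimate yields
\[
nk\log c \;\ge\; \log(n!) \;=\; n\log n - O(n),
\]
which rearranges to $k\ge \Omega(\log n)$, with the hidden constant depending on $\cC$ through~$c$.

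The only nontrivial ingredient is the Marcus--Tardos bound; everything else is a direct pigeonhole calculation, so I do not anticipate any substantive obstacle. The one thing worth double-checking is that the argument uses $|\cC_n|^k$ rather than $|\cC^{\circ k}_n|$ directly (the former being easier to bound), and that the inversion $\pi\mapsto\pi^{-1}$ is a bijection on $\cS_n$ so no factor is lost when translating between sortable permutations and elements of $\cC_n^{\circ k}$.
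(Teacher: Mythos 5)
Your proof is essentially the paper's own argument: both invoke the Marcus--Tardos bound $|\cC_n|\le c^n$ and conclude by counting permutations sortable in $k$ steps. One tiny inaccuracy: $\st{\cC}{\pi}\le k$ is equivalent to $\pi^{-1}\in\cC_n^{\circ m}$ for \emph{some} $m\le k$, not to $\pi^{-1}\in\cC_n^{\circ k}$ itself (unless $\iota_n\in\cC_n$), so the cleanest bound is $n!\le\sum_{m=1}^k|\cC_n|^m\le k\,c^{nk}$ — the extra factor $k$ is harmless and this is exactly what the paper writes.
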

\begin{proof}
By the Marcus--Tardos theorem, there is a constant $c>0$ such that $|\cC_n|\le c^n$ for all $n$. 
Consequently, $|\cC_n^{\circ k}| \le c^{kn}$ for any $k\ge 1$.  Taking $k:=\wst{\cC}{n}$ yields
\[
n!=|S_n|=\left|\bigcup_{m=1}^k \cC_n^{\circ m}\right|\le \sum_{m=1}^k |\cC_n^{\circ m}|\le kc^{kn},
\]
which implies $k=\Omega(\log n)$.
\end{proof}

Obviously, if $\cC$ is the class of all permutations, then $\wst{\cC}{n}=1$ for all $n\ge 2$. This 
completes the proof of Theorem~\ref{thm-main}.

\section{Conclusion and open problems}\label{sec-open}

The two main open problems concern the two levels of our hierarchy where we do not have matching 
upper and lower bounds. Recall that the set $\bbX$ contains the eight monotone juxtapositions, the 
class $\cL$ of layered permutations, its reverse $\cL^r$, the class $\PBT$ of parallel block 
transpositions, and its reversal $\PBT^r$. We know that for any monotone juxtaposition, the 
worst-case sorting time is $\Theta(\log n)$, but for $\cL$, $\PBT$ and their symmetries, we only 
have a lower bound of order $\Omega(\log n)$ and an upper bound of order $O(\log^2 n)$. 

Note that Observations~\ref{obs-comp} and \ref{obs-r} imply that
$\wst{\cL}{n} = \Theta(\wst{\cL^r}{n})$, $\wst{\PBT}{n} = \Theta(\wst{\PBT^r}{n})$, and 
$\wst{\cL}{n}=O(\wst{\PBT}{n})$. We can therefore restrict our attention to $\cL$ and~$\PBT$.

\begin{problem}
What is the worst-case sorting time of the class $\cL$ of layered permutations, and of the class 
$\PBT$ of parallel block transpositions?
\end{problem}

Another gap in our bounds concerns the broad family of $\bbX$-avoiding classes. We proved that any 
$\bbX$-avoiding class has worst-case sorting time of order $\Omega(\sqrt{n})$. However, every
example of such a class that we know of has worst-case sorting time of order $\Omega(n)$, 
suggesting that the lower bound can be further improved. 

\begin{problem}
 Is there an $\bbX$-avoiding class $\cC$ with $\wst{\cC}{n}=o(n)$?
\end{problem}

\bibliography{biblio}

\end{document}